\newtheorem{thm}{Theorem}[section]
\newtheorem{cor}[thm]{Corollary}
\newtheorem{lem}[thm]{Lemma}
\newtheorem{prop}[thm]{Proposition}
\theoremstyle{plain} 
\newcommand{\thistheoremname}{}
\newtheorem*{genericthm}{\thistheoremname}
\theoremstyle{definition}
\newtheorem{defn}[thm]{Definition}
\theoremstyle{remark}
\newtheorem*{xrem}{Remark}
\newtheorem*{ntt}{Notation}
\numberwithin{equation}{section}
\newcommand{\N}{\mathbb{N}}      
\newcommand{\Z}{\mathbb{Z}}      
\newcommand{\R}{\mathbb{R}}      
\newcommand{\eps}{\varepsilon}   
\newcommand{\negphantom}[1]{\settowidth{\dimen0}{$\displaystyle #1$}\hspace*{-\dimen0}}
\renewcommand{\pmod}[1]{         
  ~(\mathrm{mod}~#1)}
\newcommand{\E}{\mathbb{E}}   
\newcommand{\A}{\mathscr{A}}
\renewcommand{\P}{\mathbb{P}}
\newcommand{\li}{\mathrm{li}}
\setlist[itemize]{leftmargin=*}
\setlist[enumerate]{leftmargin=*}
\begin{document}

\title[Waring and Waring--Goldbach subbases]{Waring and Waring--Goldbach subbases with prescribed representation function}%
\author{Christian T\'afula}%
\address{Instituto de Matem\'atica, Estat\'istica\\
e Ci\^encia da Computa\c{c}\~ao\\
Universidade de S\~ao Paulo\\
Rua do Mat\~ao, 1010\\
S\~ao Paulo, SP 05508-090\\
Brazil}
\curraddr{}
\email{tafula@ime.usp.br}
\thanks{}

\subjclass[2020]{11P05, 11P32, 11B13, 05D40, 26A12}%
\keywords{thin subbases, representation functions, probabilistic method, regular variation, Waring's problem, Waring--Goldbach problem}%

\begin{abstract}
 Let $h\geq 2$. For $A\subseteq \mathbb{N}$ write
 \[ r_{A,h}(n) := \#\{(x_1,\ldots,x_h)\in A^h ~|~ x_1+\cdots+x_h=n\}. \]
 We prove a general probabilistic subbasis principle: assuming an asymptotic for a weighted $h$-fold representation sum over a basis $B$, there exist subbases $A\subseteq B$ whose representation function $r_{A,h}(n)$ has prescribed regularly varying growth.

 We apply this to $k$-th powers $\mathbb{N}^k$ and to $k$-th powers of primes $\mathbb{P}^k$. For $h \geq k^2-k+O(\sqrt{k})$, we show that every regularly varying function $F$ with $F(x)/\log x\to\infty$ in the admissible range is realized, with the expected singular series factor. In particular, there exists $A\subseteq \mathbb{N}^k$ such that
 \[ r_{A,h}(n)\sim \mathfrak{S}_{k,h}(n) F(n). \]
 Moreover, in the prime setting we obtain thin subbases $A\subseteq \mathbb{P}^k$ with $r_{A,h}(n)\asymp \log n$ for $n$ in the admissible congruence classes.
\end{abstract}

\maketitle

\section{Introduction}
 Let $A \subseteq \N = \mathbb{Z}_{\geq 0}$ and $h \geq 2$ be an integer. Define the representation function  
 \[ r_{A,h}(n) := \#\{(x_1, \ldots, x_h) \in A^h \mid x_1 + \cdots + x_h = n\}, \]  
 which counts the number of ways an integer $n$ can be expressed as a sum of $h$ elements from $A$. In 1956, inspired by a problem of Sidon, Erd\H{o}s \cite{erdo56} used probabilistic methods to show that there exists a subset $A \subseteq \mathbb{N}$ such that $r_{A,2}(n) \asymp \log n$. Later work by Erdős \cite[Corollary 3]{erd86} established that for certain functions $F$ satisfying $\lim_{x\to\infty} F(x)/\log x = \infty$ (e.g., $F(x) = \log x \log\log x$), there exist subsets $A \subseteq \mathbb{N}$ such that $r_{A,2}(n) \sim F(n)$.
 
 In 1990, Erd\H{o}s and Tetali \cite{erdtet90} extended this result for any $h \geq 2$, showing the existence of subsets $A \subseteq \mathbb{N}$ such that $r_{A,h}(n) \asymp \log n$ (so-called \emph{thin bases}). In our earlier work \cite{taf25}, we extended this line of research by studying subsets $A$ whose representation functions grow according to regularly varying functions. Specifically, we proved that for any regularly varying function $F$ such that
 \[ \lim_{x\to\infty} \frac{F(x)}{\log x} = \infty, \qquad F(x) \leq (1 + o(1)) \frac{x^{h-1}}{(h-1)!}, \]
 there exists $A \subseteq \mathbb{N}$ such that $r_{A,h}(n) \sim F(n)$.

 A real-valued function $F : \mathbb{R}_{>0} \to \mathbb{R}_{>0}$ is called \emph{regularly varying} if it is measurable and satisfies $\lim_{x \to \infty} F(\lambda x)/F(x)$ for every $\lambda > 0$. Regularly varying functions are of the form  
 \[ F(x) = x^\kappa \psi(x), \]  
 where $\kappa \in \mathbb{R}$ and $\psi(x)$ is \emph{slowly varying}, meaning $\lim_{x \to \infty} \psi(\lambda x)/\psi(x) = 1$ for any $\lambda > 0$. Slowly varying functions satisfy $\psi(x) = x^{o(1)}$ (see Bingham--Goldie--Teugels \cite{bingham89}). These functions provide a broad and flexible framework for describing growth conditions, encompassing a variety of natural cases.  

 The core of the present paper is a general probabilistic subbasis principle (Theorem \ref{MT2}) that converts suitable weighted $h$-fold additive asymptotics over a basis $B$ into the existence of subbases $A\subseteq B$ whose representation function $r_{A,h}(n)$ has a prescribed regularly varying growth. We then implement this principle for the classical arithmetic sets
 \[ \N^k := \{n^k ~|~ n\in\N\} \qquad\text{and}\qquad \P^k:=\{p^k ~|~ p \text{ prime}\}, \]
 thereby producing subbases of $k$-th powers and of $k$-th powers of primes with prescribed representation functions. In the applications, where we establish weighted asymptotics via the circle method, the admissible range in the number of variables is essentially the sharp one currently obtainable from Vinogradov mean value estimates.

\subsection{Waring subbases}
 In 2000, Vu \cite{vvu00mc} showed that for each \( k \geq 2 \), there exists an integer \( h_k \ll 8^k k^3 \) such that for all \( h \geq h_k \), there exists a subset \( A \subseteq \mathbb{N}^k \) for which \( r_{A,h}(n) \asymp \log n \).\footnote{In \cite[Lemma 2.1]{vvu00wp} it is claimed ``$8^k k^2$'', but the computation of $s_3(k)$ at the start of \cite[Subsection 2.2]{vvu00wp} yields the bound $O(8^k k^3)$.} Later, Wooley \cite{woo03} refined this bound, proving that  
 \[ h_k \leq k\left(\log k + \log \log k + 2 + O\left(\frac{\log \log k}{\log k}\right)\right). \]
 More recently, drawing from the work of Br\"udern--Wooley \cite{bruwoo23}, Pliego \cite{pli24} further improved the bound to 
 \[ h_k \leq \lceil k(\log k + 4.20032) \rceil,\]
 showing that bounds for $h_k$ essentially align with those for the asymptotic order of $\N^k$ as derived from the circle method. Pliego also established that for certain functions $\psi(x)=x^{o(1)}$ with $\lim_{x\to\infty}\psi(x)/\log x=\infty$, there exist $A\subseteq \N^k$ such that
 \[ r_{A,h}(n) \sim \mathfrak{S}_{k,h}(n) \psi(n), \]
 where \( \mathfrak{S}_{k,h}(n) \) denotes the singular series associated with Waring’s problem, defined by
 \begin{equation}
  S(q,a) := \sum_{r=1}^{q} e\bigg(\frac{ar^{k}}{q}\bigg),\qquad \mathfrak{S}_{k,h}(n) := \sum_{q\geq 1}\sum_{\substack{a=1 \\ (a,q)=1}}^{q} \frac{S(q,a)^h}{q^h} e\bigg(-\frac{na}{q}\bigg). \label{singser}
 \end{equation}
 Our results differ in two ways. First, we address a prescribed growth problem within regular variation, covering a broad range $\log x \ll F(x) \ll x^{h/k-1}$. Second, our general Theorem \ref{MT2} applies beyond $\N^k$, and in particular yields the analogous results for $\P^k$.
 
 Our first theorem treats $\N^k$. Let $H_0(k)$ be defined by
 \begin{equation}\label{h0k1}
  H_0(k) := \begin{cases}
          2^{k-1}, & 1\leq k\leq 4,\\
          \frac{1}{2}k(k-1)+\lfloor\sqrt{2k+2}\rfloor, & k\geq 5,
         \end{cases} 
 \end{equation}
 so that Hua's inequality in the form of Wooley \cite[Corollary 14.7]{woo19} implies
 \[ \int_0^1 \bigg|\sum_{n\leq x} e(\alpha n^k)\bigg|^{s}\,\mathrm{d}\alpha \ll x^{s-k+o(1)} \qquad (s\geq 2H_0(k)). \]
 (See Lemma \ref{war-hua}). 

 \begin{thm}\label{MT1}
  Let $k\geq 2$ and $h\geq 2H_0(k)+1$. Let $F$ be a regularly varying function satisfying
  \[ \lim_{x\to\infty}\frac{F(x)}{\log x}=\infty,\qquad
     F(x)\leq (1+o(1))\frac{\Gamma(1+1/k)^h}{\Gamma(h/k)}x^{h/k-1}. \]
  Then there exists $A\subseteq \N^k$ such that $r_{A,h}(n)\sim \mathfrak{S}_{k,h}(n) F(n)$.
  
  Moreover, if $\log x \ll F(x) \ll x^{h/k-1}$, then there exists $A\subseteq \N^k$ with $r_{A,h}(n) \asymp F(n)$.
 \end{thm}
 
 Theorem \ref{MT1} is obtained by combining our abstract subbasis theorem (Theorem \ref{MT2}) with a weighted asymptotic for $r_{\N^k,h}(n)$ (a variant of a theorem of Wooley \cite{woo04}, with the admissible range of $h$ sharpened using the above form of Hua's inequality).

\subsection{Waring--Goldbach subbases}
 We also treat the prime analogue $\P^k$. Here congruence obstructions intervene, as is standard in Waring--Goldbach problems. Let $k\geq 1$, and given a prime $p$, let $\theta = \theta(k,p)$ be the unique integer such that $p^{\theta} \mid k$ but $p^{\theta+1}\nmid k$. Define
 \begin{equation} \gamma = \gamma(k,p) := \begin{cases}
                             \theta + 2 &\text{ if } p=2,\, 2\mid k, \\
                             \theta + 1 &\text{ otherwise},
                            \end{cases}
    \qquad K(k) := \prod_{(p-1)\mid k} p^{\gamma}. \label{Kkval}
 \end{equation}
 If a prime $q$ is coprime to $K(k)$, then $q^{k} \equiv 1 \pmod{p^{\gamma}}$ whenever $(p-1)\mid k$, since $\varphi(p^{\gamma}) = p^{\theta}(p-1) \mid k$ for $p$ odd (and $\varphi(2^{\gamma})/2 \mid k$), where $\varphi(n):= \#\{m\leq n ~|~ (m,n)=1 \}$ is Euler's totient function. So by the Chinese remainder theorem, if $n$ is a sum of $h$ $k$-th powers of primes larger than $k+1$, then necessarily $n\equiv h\pmod{K(k)}$. If $k$ is odd, then $K(k) =2$.
 
 \begin{table}[h!]
  \centering
  \begin{tabular}{ccccccccccc}
  \toprule
  $k$     & 1 & 2 & 3 & 4 & 5 & 6 & 7 & 8 & 9 & 10 \\ \midrule
  $K(k)$  & 2 & 24 & 2 & 240 & 2 & 504 & 2 & 480 & 2 & 264 \\ \bottomrule
  \end{tabular}
  \caption{First few values of $K(k)$.}
  \label{tab:kk}
 \end{table}

 \noindent
 The corresponding singular series associated to Waring--Goldbach's problem is defined as
 \begin{equation}
  S^{*}(q,a) := \sum_{\substack{r=1 \\ (r,q)=1}}^{q} e\bigg(\frac{ar^{k}}{q}\bigg),\qquad \mathfrak{S}^{*}_{k,h}(n) := \sum_{q\geq 1} \sum_{\substack{a=1 \\ (a,q)=1}}^{q} \frac{S^{*}(q,a)^h}{\varphi(q)^h}e\bigg(-\frac{na}{q}\bigg). \label{singserp}
 \end{equation}
 
 Our next result is based on a weighted Waring--Goldbach asymptotic, serving as the prime analogue of Wooley's theorem for $\N^k$ \cite[Theorem 1.1]{woo04}. Let $H_0(k)$ be as in \eqref{h0k1}. In particular, the hypothesis $h\geq 2H_0(k)+1$ again amounts to $h \geq k^2 - k + O(\sqrt{k})$.
 
 \begin{thm}\label{MTp}
  Let $k\geq 1$ and $h\geq 2H_0(k)+1$, and let $\delta>0$ be any real number with 
  \[ \delta < \frac{h-2H_0(k)}{2 h(h-1) H_0(k)}. \]
  Then, for any $\omega \geq 1/h - \delta$,
  \begin{equation*}
   \begin{aligned}
    \sum_{\substack{x_1,\ldots,x_h \in \P^k \\ x_1+\cdots+x_h = N}} (x_1\cdots x_h)^{\omega-\frac{1}{k}}(\log x_1 \cdots \log x_h) = \mathfrak{S}^{*}_{k,h}(N)\,\frac{\Gamma(\omega)^h}{\Gamma(h\omega)} \, N^{h\omega-1} + O_R\bigg(\frac{N^{h\omega -1}}{(\log N)^{R}}\bigg)
   \end{aligned}
  \end{equation*}
  for every $R>1$.
 \end{thm}
 Combining this input with the general subbasis principle (Theorem \ref{MT2}), we obtain subbases $A\subseteq \P^k$ with prescribed growth, including thin subbases. 

 \begin{thm}\label{MTp1}
  Let $k\geq 1$ and $h\geq 2H_0(k)+1$. Let $F$ be a regularly varying function satisfying
  \[ \lim_{x\to\infty}\frac{F(x)}{\log x}=\infty,\qquad
     F(x)\leq (1+o(1))\frac{\Gamma(1/k)^h}{\Gamma(h/k)}\frac{x^{h/k-1}}{(\log x)^h}. \]
  Then there exists $A\subseteq \P^k$ such that $r_{A,h}(n)\sim \mathfrak{S}^{*}_{k,h}(n) F(n)$ \textnormal{($n\equiv h\pmod{K(k)}$)}.
  
  Moreover, if $\log x \ll F(x) \ll x^{h/k-1}/(\log x)^h$, then there exists $A\subseteq \P^k$ with $r_{A,h}(n) \asymp F(n)$ \textnormal{($n\equiv h\pmod{K(k)}$)}.
 \end{thm}
 
 Theorem \ref{MTp1} implies, in particular, that there exists $A\subseteq \P$ such that $r_{A,3}(n) \asymp \log n$ for $n$ odd, there exists $B\subseteq \P^2$ such that $r_{B,5}(n) \asymp \log n$ for $n\equiv 5\pmod{24}$, and so on.
 
 \begin{xrem}
  Wirsing \cite[Theorem 2]{wir86} showed that for every $h\geq 3$, there exists a subset $A\subseteq \P$ with $|A\cap [1,x]| \ll (x \log x)^{1/h}$ such that every $n\equiv h\pmod{2}$ can be written as a sum of $h$ elements of $A$. Our result strengthens this by producing $B\subseteq \P$ with $|B\cap [1,x]| \asymp (x \log x)^{1/h}$ (by \eqref{szofAA}) such that $r_{B,h}(n) \asymp \log n$ for $n\equiv h\pmod{2}$.

  Assuming the asymptotic version of Goldbach's conjecture, Granville \cite[Theorem 2]{gra07} showed that there is a subset $A\subseteq \P$ with $|A\cap [1,x]| \asymp (x\log x)^{1/2}$ such that every large even integer is the sum of two elements of $A$. Our result, however, does not apply to this case.
 \end{xrem}

\subsection{General subbases}
 Theorems \ref{MT1} and \ref{MTp1} will be derived from the general Theorem \ref{MT2}, which abstracts the probabilistic construction and isolates the analytic input needed in applications. Let $B\subseteq \N$ and write $B(x):=|B\cap[1,x]|$. Assume that
 \begin{equation}\label{oregvar}
  B(x)\asymp x^{\beta}\vartheta(x)
 \end{equation}
 for some $\beta>0$ and some slowly varying $\vartheta$. In addition, suppose that $B$ satisfies a mean value estimate of Hua type:
 \begin{equation}\label{hua-type}
  \sum_{n\leq x} r_{B,H_0}(n)^2 \ll \frac{B(x)^{2H_0}}{x}\,x^{o(1)}
 \end{equation}
 for some $H_0=H_0(B)\geq 1$. This condition asks for $hB$ to have nearly minimal additive energy, in the sense that $\sum_{n\leq x} r_{B,h}(n)^2 \geq (\sum_{n\leq x} r_{B,h}(n))^2/x \asymp B(x)^{2h}/x$ by Cauchy--Schwarz. It will only be used in the proof of Theorem \ref{nonexact}. 
 
 \begin{thm}\label{MT2}
  Suppose $B\subseteq \N$ satisfies \eqref{oregvar} and \eqref{hua-type}, and let $h \geq 2H_0+1$. Let $F(x)=x^{\kappa}\psi(x)$ be a regularly varying function with $F(x)\leq (1+o(1))\,x^{h\beta-1}\vartheta(x)^h$, and define $f(x) := (xF(x))^{1/h}$. Suppose that
  \begin{equation}
   \sum_{\substack{x_1,\ldots, x_h\in B \\ x_1+\cdots+ x_h = n}} \frac{f(x_1)}{x_1^{\beta}\vartheta(x_1)}\cdots \frac{f(x_h)}{x_h^{\beta}\vartheta(x_h)} \sim \mathfrak{S}(n)F(n), \label{mainest}
  \end{equation}
  where $\mathfrak{S}(n)$ is some function of $n$ (depending on $h,B,F$) satisfying 
  \[ \mathfrak{S}(n)\asymp 1 \quad \text{for } n\in \mathscr{S}, \]
  for some subset $\mathscr{S}\subseteq \Z_{\geq 0}$. Then:
  \begin{enumerate}[label=\textnormal{(\roman*)}]
   \item If $F(n)/\log n \to \infty$, there exists $A\subseteq B$ such that 
   \[ r_{A,h}(n) \sim \mathfrak{S}(n)F(n) \quad (n\in\mathscr{S}). \]
  
   \item If $F(n)\gg \log n$, there exists $A\subseteq B$ such that 
   \[ r_{A,h}(n) \asymp F(n) \quad (n\in\mathscr{S}). \]
  \end{enumerate}
 \end{thm}

 In our applications, the subset $\mathscr{S}$ in Theorem \ref{MT2} is either all the positive integers (for $k$-th powers) or the admissible congruence classes $a\cdot \N+b$ (for $k$-th powers of primes). To apply Theorem \ref{MT2} it is often enough to establish \eqref{mainest} in the power case $F(x)=x^{\kappa}$; we record two corollaries which extend this to a wide class of slowly varying perturbations.
 
 The first corollary shows that many standard perturbations (e.g. powers of $\log$) can be treated directly, though in general one obtains $\asymp$-bounds rather than an asymptotic formula.
 
 \begin{cor}\label{cor-tech}
  Let $B$ and $h$ be as in Theorem \ref{MT2}, and suppose \eqref{mainest} holds with $F(x)=x^{\kappa}$ for some $0\leq \kappa \leq h\beta-1$. Let $\mathcal{L}$ be a measurable positive function such that $\mathcal{L}(x^{\lambda}) \asymp_{\lambda} \mathcal{L}(x)$ for every $\lambda>0$, and write $G(x) := x^{\kappa}\mathcal{L}(x)$. If $\log x \ll G(x) \ll x^{h\beta-1}\vartheta(x)^h$, then there exists $A\subseteq B$ such that
  \[ r_{A,h}(n)\asymp G(n) \quad (n\in\mathscr{S}). \]
 \end{cor}
 
 \begin{xrem}
  The condition imposed on $\mathcal{L}$ is stronger than slow variation. Nevertheless, it is still compatible with slow variation in the sense that for any such $\mathcal{L}$ one has $\mathcal{L}\asymp \psi$ for some slowly varying $\psi$ (see the proof of Corollary \ref{cor-tech}). Moreover, the argument for Corollary \ref{cor-tech} yields an asymptotic formula under the stronger hypothesis $\mathcal{L}(x)\sim \mathcal{L}(x^{\lambda})$ for every $\lambda>0$ (for instance $\mathcal{L}(x)=\log\log x$), producing $A\subseteq B$ with $r_{A,h}(n)\sim \mathfrak{S}(n)\,G(n)$ ($n\in \mathscr{S}$).
 \end{xrem}

 The second corollary says that, for a fixed exponent $\kappa$, to obtain the full family of slowly varying perturbations it suffices to supplement the power case with a suitable upper bound at a slightly smaller exponent.
 
 \begin{cor}\label{cor-tech2}
  Let $B$ and $h$ be as in Theorem \ref{MT2}, and suppose that \eqref{mainest} holds with $F(x)=x^{\kappa}$ for some $0\leq \kappa \leq h\beta-1$. Assume that there exists $\delta>0$ such that, with $\omega=(1+\kappa)/h$,
  \begin{equation*}
   \sum_{\substack{x_1,\ldots, x_h\in B \\ x_1+\cdots+ x_h = n}} \frac{x_1^{\omega-\delta}}{x_1^{\beta}\vartheta(x_1)}\cdots \frac{x_h^{\omega-\delta}}{x_h^{\beta}\vartheta(x_h)} \ll n^{h(\omega-\delta)-1}.
  \end{equation*}
  Then for every slowly varying function $\psi$, the asymptotic \eqref{mainest} holds with $F$ replaced by $G(x):=x^{\kappa}\psi(x)$; that is,
  \begin{equation*}
   \sum_{\substack{x_1,\ldots, x_h\in B \\ x_1+\cdots+ x_h = n}} \frac{g(x_1)}{x_1^{\beta}\vartheta(x_1)}\cdots \frac{g(x_h)}{x_h^{\beta}\vartheta(x_h)} \sim \mathfrak{S}(n)\, G(n) \quad (n\in\mathscr{S}),
  \end{equation*}
  where $g(x):=(xG(x))^{1/h}$. In particular:
  \begin{enumerate}[label=\textnormal{(\roman*)}]
   \item If
   \[ \lim_{x\to\infty}\frac{G(x)}{\log x}=\infty,\qquad G(x) \leq (1+o(1))\, x^{h\beta-1}\vartheta(x)^h, \]
   then there exists $A\subseteq B$ such that $r_{A,h}(n)\sim \mathfrak{S}(n)\,G(n)$ \textnormal{($n\in\mathscr{S}$)}.\medskip
   
   \item If $\log x\ll G(x)\ll x^{h\beta-1}\vartheta(x)^h$, then there exists $A\subseteq B$ with $r_{A,h}(n)\asymp G(n)$ \textnormal{($n\in\mathscr{S}$)}.
  \end{enumerate}
 \end{cor}
 
 In combination with Lemma \ref{szofA}, the proof of Theorem \ref{MT2} also yields the asymptotic density of the resulting subbases if $B(x)$ is regularly varying: for $F(x)=x^{\kappa}\psi(x)$ one has
 \begin{equation}\label{szofAA}
  |A\cap[1,x]|\sim \frac{h\beta}{1+\kappa}(xF(x))^{1/h}.
 \end{equation}
 
 \begin{ntt}
  Throughout the paper, we use the common asymptotic notation $\sim$, $o$, $O$, $\gg$, $\ll$, $\asymp$ with a subscript to indicate dependency on some parameter (omitting dependencies on $h$, $k$, $f$, $F$, and $B$). For a sequence of random variables $(X_n)_{n\geq 1}$ in a probability space $\Omega$, and a real function $g$, we write $X_{n} \stackrel{\text{a.s.}}{\sim} g(n)$ if $X_n(\omega)\sim g(n)$ for almost all $\omega \in\Omega$. Also, $e(\alpha) := e^{2\pi i \alpha}$.
 \end{ntt}

\section{Bounds for weighted solution counts}\label{sec-ne}
 Suppose that $B\subseteq \N$ satisfies \eqref{oregvar} and \eqref{hua-type}. The main goal of this section is to establish the following general estimate for weighted counts of solutions to linear equations with variables restricted to $B$.

 \begin{thm}\label{nonexact}
  Let $h\geq 2H_0+1$ and $1\leq \ell \leq h-1$. Suppose $f(x)=x^{\omega}\phi(x)$ is a regularly varying function with $\omega \geq 1/h$, and let $c_1,\ldots,c_{\ell}\in \Z_{\geq 1}$ with $c_1+\cdots+c_{\ell}\leq C$. Then, for any $0<\eta<(h-\ell)(\omega-\frac{1}{h}) + \tfrac{1}{h(h-1)}$,
  \[ \sum_{\substack{x_1,\ldots, x_{\ell}\in B \\ c_1 x_1+\cdots+ c_{\ell}x_{\ell} = N}} \frac{f(x_1)}{B(x_1)} \cdots \frac{f(x_{\ell})}{B(x_{\ell})} \ll_C N^{-\eta}\, \frac{f(N)^{h}}{N}. \]
 \end{thm}

 Theorem \ref{nonexact} shows that contribution of solutions to $x_1+\cdots+x_h = N$ with repeated (e.g., $x_1=x_2$) or small variables is negligible in weighted counts. In particular, it allows us to replace $f$ and $B(x)$ by asymptotic equivalents in sums over solutions to $x_1+\cdots+x_h=N$, as the following corollary makes precise. 
 
 \begin{cor}\label{cor1}
  Let $h\geq 2H_0+1$, and let $f(x)=x^{\omega}\phi(x)$ be a regularly varying function with $\omega \geq 1/h$. For any $0<\eta< \tfrac{1}{h(h-1)}$, we have
  \[ \sum_{\substack{x_1,\ldots, x_{h}\in B \\ x_1+\cdots+x_h = N}} \frac{f(x_1)}{x_1^{\beta}\vartheta(x_1)} \cdots \frac{f(x_h)}{x_h^{\beta}\vartheta(x_h)} = \sum_{\substack{x_1,\ldots, x_h\in B \\ x_1+\cdots+ x_h = N \\ \forall j,\, x_j\geq N^{\eta/2\omega}}} \frac{f(x_1)}{x_1^{\beta}\vartheta(x_1)} \cdots \frac{f(x_h)}{x_h^{\beta}\vartheta(x_h)} + O\bigg(N^{-\frac{\eta}{2}+o(1)} \frac{f(N)^h}{N}\bigg). \]
 \end{cor}
 \begin{proof}
  Since $f(x)/x^{\beta}\vartheta(x) \asymp f(x)/B(x)$, Theorem \ref{nonexact} with $\ell=h-1$ gives
  \begin{align*}
   \sum_{\substack{x_1,\ldots, x_h\in B \\ x_1+\cdots+ x_h = N \\ \exists j ~|~ x_j < N^{\eta/2\omega}}} \frac{f(x_1)}{x_1^{\beta}\vartheta(x_1)} \cdots \frac{f(x_h)}{x_h^{\beta}\vartheta(x_h)} &\ll \sum_{\substack{n \leq N^{\eta/2\omega} \\ n\in B}} \frac{f(n)}{B(n)} \sum_{\substack{x_1,\ldots, x_{h-1}\in B \\ x_1+\cdots+ x_{h-1} = N-n}} \frac{f(x_1)}{B(x_1)}\cdots \frac{f(x_{h-1})}{B(x_{h-1})} \\
   &\ll N^{-\eta} \frac{f(N)^h}{N} \sum_{\substack{n \leq B(N^{\eta/2\omega})}} \frac{f(b_n)}{n},
  \end{align*}
  where $b_n$ is the $n$-th element of $B$. Since $B(x) = x^{\beta+o(1)}$, we have $b_n = n^{1/\beta + o(1)}$, hence
  \[ \sum_{\substack{n \leq B(N^{\eta/2\omega})}} \frac{f(b_n)}{n} = \sum_{\substack{n \leq B(N^{\eta/2\omega})}} n^{\omega/\beta - 1 + o(1)} = N^{\frac{\eta}{2} + o(1)}.  \]
  Combined with the previous line, this completes the proof.
 \end{proof}

\subsection{Auxiliary lemmas}
 The first result we need is the Carleson--Hunt theorem, which bounds the maximal operator of Fourier partial sums on $L^p$. In our setting, it lets us replace a supremum over truncated sums by the full sum up to a constant factor.
 
 \begin{lem}[Carleson--Hunt theorem]\label{carlhun}
  Let $1<p<\infty$ and let $(a_n)_{1\leq n \leq N}$ be a finite sequence of complex numbers. Then there exists a constant $C_p>0$, depending only on $p$, such that
  \[ \int_0^1 \bigg(\sup_{1\leq M\leq N} \bigg|\sum_{n=1}^{M} a_n e(n\alpha)\bigg|\bigg)^p \,\mathrm{d}\alpha \leq C_p \int_0^1 \bigg|\sum_{n=1}^{N} a_n e(n\alpha)\bigg|^p \,\mathrm{d}\alpha. \]
 \end{lem}
 \begin{proof}
  See Theorem 11.2.1 (b), p. 456 of Grafakos \cite{grafakos09}.
 \end{proof}
  
 To control exponential sums with coefficients arising from regularly varying functions, it is often convenient to replace a given function by a piecewise-constant approximation on dyadic intervals. 
 We refer to this modification as the \emph{dyadic smoothing} of the function.

 \begin{defn}\label{diasmo}
  Let $\phi$ be a slowly varying function. We define the \emph{dyadic smoothing} $\widetilde{\phi}$ of $\phi$ by
  \[ \widetilde{\phi}(x) := \sup_{\substack{2^{J} \leq y < 2^{J+1} \\ J = \lfloor \frac{\log x}{\log 2}\rfloor}} \phi(y). \]
  If $f(x) = x^{\omega}\phi(x)$ is regularly varying, we set $\widetilde{f}(x) := x^{\omega}\widetilde{\phi}(x)$.
 \end{defn}

 For slowly varying functions $\phi$, the uniform convergence theorem (cf. BGT \cite[Theorem 1.2.1]{bingham89}) implies
 \begin{equation}
  \lim_{x \to \infty} \sup_{\lambda \in [\frac{1}{2}, 2]} \frac{\phi(\lambda x)}{\phi(x)} = 1. \label{uniconv}
 \end{equation}
 It then follows immediately from the definition of dyadic smoothing that $f(x) \sim \widetilde{f}(x)$. In particular, one may replace $f$ by $\widetilde{f}$ in estimates without affecting the main term, while benefiting from the fact that $\widetilde{f}$ is essentially monotone on dyadic intervals.

 \begin{lem}\label{lemtch}
  Let $f(x) = x^{\omega}\phi(x)$ be regularly varying. Then
  \[ \sum_{x \leq n \leq 2x} |\widetilde{f}(n+1) - \widetilde{f}(n)| \ll_\omega f(x). \]
 \end{lem}
 \begin{proof}
  Write $\widetilde{f}(n) = n^\omega \widetilde{\phi}(n)$. By definition, $\widetilde{\phi}$ is constant on each dyadic interval $[2^J,2^{J+1})$, so there is at most one integer $n^* \in [x,2x]$ where $\widetilde{\phi}(n^*) \neq \widetilde{\phi}(n^*+1)$. For $n \in [x,2x]\setminus\{n^*\}$, by \eqref{uniconv} we have
  \[ |\widetilde{f}(n+1)-\widetilde{f}(n)| = \widetilde{\phi}(n)\,|(n+1)^\omega - n^\omega| \sim \phi(x)\,|(n+1)^\omega - n^\omega|. \]
  Summing over $n \in [x,2x]$ gives
  \[ \sum_{x \leq n \leq 2x} |\widetilde{f}(n+1)-\widetilde{f}(n)| \ll \phi(x) \sum_{x \leq n \leq 2x} |(n+1)^\omega - n^\omega| + |\widetilde{f}(n^*+1)-\widetilde{f}(n^*)|. \]
  Since $n \mapsto n^\omega$ is monotone, $\sum_{x\leq n \leq 2x} |(n+1)^\omega - n^\omega| \ll_\omega x^\omega$, and the remaining jump at $n^*$ satisfies $|\widetilde{f}(n^*+1)-\widetilde{f}(n^*)| \ll f(x)$. Combining these estimates and using $x^\omega \phi(x) = f(x)$ completes the proof.
 \end{proof}

\subsection{Weighted mean value bounds}
 By Parseval’s identity, it is a basic consequence of \eqref{hua-type} that for any $h\geq 2H_0$,
 \begin{align}
  \int_{0}^{1} \bigg|\sum_{n\leq x} \mathbbm{1}_{B}(n)\, e(n\alpha)\bigg|^{h}\,\mathrm{d}\alpha 
  &\leq B(x)^{h-2H_0} \int_{0}^{1} \bigg|\sum_{n\leq x} \mathbbm{1}_{B}(n)\, e(n\alpha)\bigg|^{2H_0}\,\mathrm{d}\alpha \nonumber \\
  &\leq B(x)^{h-2H_0} \sum_{n\leq H_0x} r_{B,H_0}(n)^2 \nonumber \\
  &\ll \frac{B(x)^h}{x} \, x^{o(1)}. \label{hua-type2}
 \end{align}
 The following lemma is a weighted version of this estimate.

 \begin{lem}\label{weighted-hua}
  Let $f(x) = x^{\omega}\phi(x)$ be a regularly varying function. Then, for every $\ell\geq 1$,
  \[ \int_{0}^{1} \bigg|\sum_{n\leq x} \widetilde{f}(n)\,\frac{\mathbbm{1}_{B}(n)}{n^{\beta}} \, e(n\alpha)\bigg|^{\ell} \,\mathrm{d}\alpha \ll x^{\max\{\ell\omega - \frac{\ell}{\ell^{*}},\,0\} + o(1)}, \]
  where $\ell^{*} := \max\{\ell,2H_0\}$ and $\widetilde{f}$ is as in Definition \ref{diasmo}.
 \end{lem}

 In particular, for $\ell = h\geq 2H_0$ and $\omega \geq 1/h$, we obtain $x^{h\omega-1+o(1)}$ in the upper bound.

 \begin{proof}
  Write $g(\alpha;x) := \sum_{n\leq x} \mathbbm{1}_{B}(n)e(n\alpha)$. By applying partial summation (for integer $x$), and using Lemma \ref{lemtch} to control the differences of $\widetilde{f}(n)/n^{\beta} = \widetilde{(f(n)/n^{\beta})}$, we obtain
  \begin{align*}
   \bigg|\sum_{n\leq x} \widetilde{f}(n) &\,\frac{\mathbbm{1}_{B}(n)}{n^{\beta}}\, e(n\alpha)\bigg| \\
   &\leq \frac{\widetilde{f}(x)}{x^{\beta}} |g(\alpha;x)| + \sum_{n<x} |g(\alpha;n)| \bigg|\frac{\widetilde{f}(n+1)}{(n+1)^{\beta}} - \frac{\widetilde{f}(n)}{n^{\beta}} \bigg| \\
   &\leq \frac{\widetilde{f}(x)}{x^{\beta}} |g(\alpha;x)| + \sum_{j\leq \frac{\log x}{\log 2}} \bigg(\sup_{\frac{x}{2^{j+1}} \leq y< \frac{x}{2^j}} |g(\alpha;y)| \bigg) \sum_{\frac{x}{2^{j+1}}\leq n< \frac{x}{2^{j}}} \bigg|\frac{\widetilde{f}(n+1)}{(n+1)^{\beta}} - \frac{\widetilde{f}(n)}{n^{\beta}} \bigg| \\
   &\ll \sum_{j\leq \frac{\log x}{\log 2}} \bigg(\sup_{\frac{x}{2^{j+1}} \leq y< \frac{x}{2^j}} |g(\alpha;y)| \bigg) \frac{f(2^{-j} x)}{(2^{-j} x)^{\beta}}
  \end{align*}
  Thus, by the triangle inequality,\footnote{We use that $\lVert \sum_{j=1}^{M} f_j\rVert_{\ell} \leq \sum_{j=1}^{M} \lVert f_j \rVert_{\ell} \leq M\max_{1\leq j\leq M} \lVert f_j \rVert_{\ell}$.} we find
  \begin{align}
   \int_{0}^{1} \bigg|\sum_{n\leq x} \widetilde{f}(n) \frac{\mathbbm{1}_{B}(n)}{n^{\beta}} e(n\alpha)\bigg|^{\ell}\,\mathrm{d}\alpha &\ll \int_{0}^{1} \Bigg(\sum_{j\leq \frac{\log x}{\log 2}} \bigg(\sup_{\frac{x}{2^{j+1}} \leq y< \frac{x}{2^j}} |g(\alpha;y)| \bigg) \frac{f(2^{-j} x)}{(2^{-j} x)^{\beta}}\Bigg)^{\ell}\,\mathrm{d}\alpha \nonumber \\
   &\ll (\log x)^{\ell}\,\max_{j\leq \frac{\log x}{\log 2}} \frac{f(2^{-j} x)^{\ell}}{(2^{-j} x)^{\ell\beta}} \bigg(\int_{0}^{1} \sup_{\frac{x}{2^{j+1}} \leq y< \frac{x}{2^j}} |g(\alpha;y)|^{\ell}\,\mathrm{d}\alpha \bigg). \nonumber 
  \end{align}
  
  By monotonicity of $L^p$ norms (since $\ell^{*} \geq \ell$), applying \eqref{hua-type2} (since $\ell^{*} \geq 2H_0 >1$), the Carleson--Hunt theorem (Lemma \ref{carlhun}) yields
  \begin{align*}
   \int_{0}^{1} \sup_{\frac{x}{2^{j+1}} \leq y< \frac{x}{2^j}} |g(\alpha;y)|^{\ell}\,\mathrm{d}\alpha &\leq \bigg(\int_{0}^{1} \sup_{\frac{x}{2^{j+1}} \leq y< \frac{x}{2^j}} |g(\alpha;y)|^{\ell^{*}}\,\mathrm{d}\alpha\bigg)^{\ell/\ell^{*}} \\
   &\ll \bigg(\int_{0}^{1} |g(\alpha; 2^{-j}x)|^{\ell^{*}}\,\mathrm{d}\alpha\bigg)^{\ell/\ell^{*}} \\
   &\ll \frac{B(2^{-j}x)^{\ell}}{(2^{-j}x)^{\ell/\ell^{*}}}(2^{-j}x)^{o(1)} = (2^{-j}x)^{\ell\beta - \frac{\ell}{\ell^{*}} + o(1)}.
  \end{align*}
  Consequently, since $f(x) = x^{\omega+o(1)}$, we have
  \begin{align*}
   \int_{0}^{1} \bigg|\sum_{n\leq x} \widetilde{f}(n) \frac{\mathbbm{1}_{B}(n)}{n^{\beta}} e(n\alpha)\bigg|^{\ell}\,\mathrm{d}\alpha &\ll (\log x)^{\ell} \bigg(\max_{j\leq \frac{\log x}{\log 2}} \frac{f(2^{-j}x)^{\ell}}{(2^{-j}x)^{\ell/\ell^{*}}} (2^{-j}x)^{o(1)}\bigg) \\
   &= x^{o(1)} \bigg(\max_{j\leq \frac{\log x}{\log 2}} (2^{-j}x)^{\ell\omega- \frac{\ell}{\ell^{*}} + o(1)}\bigg) \\
   &= x^{\max\{\ell\omega - \frac{\ell}{\ell^{*}},\,0\} + o(1)}. \qedhere
  \end{align*}
 \end{proof}
 
 Lemma \ref{weighted-hua} gives a general bound for the full initial segment. When the sum is restricted to a dyadic interval, we get a better estimate.
 
 \begin{lem}\label{dyadic-hua}
  Under the assumptions of Lemma \ref{weighted-hua}, for any $C \geq 1$ one has
  \[ \int_{0}^{1} \bigg|\sum_{x/C \leq n\leq x} \widetilde{f}(n)\,\frac{\mathbbm{1}_{B}(n)}{n^{\beta}} \, e(n\alpha)\bigg|^{\ell} \,\mathrm{d}\alpha \ll_C x^{\ell\omega - \frac{\ell}{\ell^{*}} + o(1)}. \]
 \end{lem}
 \begin{proof}
  Set again $g(\alpha;x) := \sum_{n\leq x} \mathbbm{1}_{B}(n)e(n\alpha)$. Applying partial summation, together with Lemma \ref{lemtch}, gives
  \begin{align*}
   \bigg|\sum_{x/C \leq n\leq x} \widetilde{f}(n) &\,\frac{\mathbbm{1}_{B}(n)}{n^{\beta}}\, e(n\alpha)\bigg| \\
   &\ll_C \bigg(\sup_{x/C \leq y\leq x} |g(\alpha;y)| \bigg)\bigg( \frac{\widetilde{f}(x)}{x^{\beta}} + \sum_{x/C\leq n<x} \bigg|\frac{\widetilde{f}(n+1)}{(n+1)^{\beta}} - \frac{\widetilde{f}(n)}{n^{\beta}} \bigg|\bigg) \\
   &\ll_C \bigg(\sup_{x/C \leq y\leq x} |g(\alpha;y)| \bigg) \frac{f(x)}{x^{\beta}}
  \end{align*}
  As $\ell^{*}\geq \ell$, the monotonicity of $L^p$ norms (or H\"older) yields
  \begin{align}
   \int_{0}^{1} \bigg|\sum_{x/C \leq n\leq x} \widetilde{f}(n) \frac{\mathbbm{1}_{B}(n)}{n^{\beta}} e(n\alpha)\bigg|^{\ell}\,\mathrm{d}\alpha &\ll_C \frac{f(x)^{\ell}}{x^{\ell\beta}} \int_{0}^{1} \sup_{x/C \leq y\leq x} |g(\alpha;y)|^{\ell} \,\mathrm{d}\alpha \nonumber \\
   &\leq \frac{f(x)^{\ell}}{x^{\ell\beta}} \Bigg(\int_{0}^{1} \sup_{x/C \leq y\leq x} |g(\alpha;y)|^{\ell^{*}} \,\mathrm{d}\alpha\Bigg)^{\ell/\ell^{*}}. \nonumber 
  \end{align}
  By the Carleson--Hunt theorem (Lemma \ref{carlhun}) together with \eqref{hua-type2},
  \begin{align*}
   \int_{0}^{1} \sup_{x/C \leq y< x} |g(\alpha;y)|^{\ell^{*}}\,\mathrm{d}\alpha \ll \int_{0}^{1} |g(\alpha; x)|^{\ell^{*}}\,\mathrm{d}\alpha &\ll x^{\ell^{*}\beta - 1 + o(1)}.
  \end{align*}
  Substituting this estimate and recalling that $f(x)=x^{\omega+o(1)}$ completes the proof:
  \begin{align*}
   \int_{0}^{1} \bigg|\sum_{x/C \leq n\leq x} \widetilde{f}(n) \frac{\mathbbm{1}_{B}(n)}{n^{\beta}} e(n\alpha)\bigg|^{\ell}\,\mathrm{d}\alpha &\ll_C \frac{f(x)^{\ell}}{x^{\ell\beta}} (x^{\ell^{*}\beta-1 + o(1)})^{\frac{\ell}{\ell^{*}}} = x^{\ell\omega - \frac{\ell}{\ell^{*}}+o(1)}. \qedhere
  \end{align*}
 \end{proof}

 We are now ready to prove the theorem.
 
 \subsection{Proof of Theorem \ref{nonexact}}
 Let $h\geq 2H_0+1$, and $1\leq \ell \leq h-1$. First observe that
 \begin{equation*}
  \sum_{\substack{x_1,\ldots, x_{\ell}\in B \\ c_1 x_1+\cdots+ c_{\ell} x_{\ell} = N}} \frac{f(x_1)}{B(x_1)}\cdots \frac{f(x_{\ell})}{B(x_{\ell})} \asymp \sum_{\substack{x_1,\ldots, x_{\ell}\in B \\ c_1 x_1+\cdots+ c_{\ell} x_{\ell} = N}} \frac{g(x_1)}{x_1^{\beta}}\cdots \frac{g(x_{\ell})}{x_{\ell}^{\beta}},
 \end{equation*}
 where $g(x) := \widetilde{f}(x)/\widetilde{\vartheta}(x)$. Since the ratio of slowly varying functions is slowly varying, $g$ is regularly varying. Note also that $g = \widetilde{g}$ by definition, and $g(x) = x^{\omega+o(1)}$.
 
 To handle the linear relation $c_1x_1+\cdots+c_\ell x_\ell=N$, introduce the exponential sums
 \[ T(\alpha; x) := \sum_{n\leq x} g(n)\,\frac{\mathbbm{1}_{B}(n)}{n^{\beta}}\, e(\alpha n),\qquad T^{\sharp}(\alpha; x) := \sum_{x/C \leq n\leq x} g(n)\,\frac{\mathbbm{1}_{B}(n)}{n^{\beta}}\, e(\alpha n). \]
 Every solution to $c_1x_1+\cdots+c_\ell x_\ell=N$ has at least one variable $\geq N/C$, so
 \[ \sum_{\substack{x_1,\ldots, x_{\ell}\in B \\ c_1 x_1+\cdots+ c_{\ell} x_{\ell} = N}} \frac{g(x_1)}{x_1^{\beta}}\cdots \frac{g(x_{\ell})}{x_{\ell}^{\beta}} \leq \sum_{\substack{I\subseteq \{1,\ldots,\ell\} \\ I\neq\varnothing}} \sum_{\substack{x_1,\ldots, x_{\ell}\in B \\ c_1 x_1+\cdots+ c_{\ell} x_{\ell} = N \\ \forall i\in I,\, x_i\geq N/C}} \frac{g(x_1)}{x_1^{\beta}}\cdots \frac{g(x_{\ell})}{x_{\ell}^{\beta}} \]

 By orthogonality and H\"older’s inequality,
 \begin{align}
  \sum_{\substack{x_1,\ldots, x_{\ell}\in B \\ c_1 x_1+\cdots+ c_{\ell} x_{\ell} = N \\ \forall i\in I,\, x_i\geq N/C}} \frac{g(x_1)}{x_1^{\beta}}\cdots &\frac{g(x_{\ell})}{x_{\ell}^{\beta}} = \int_{0}^{1} \Bigg(\prod_{i\in I} T^{\sharp}(c_i\alpha; N)\Bigg)\Bigg(\prod_{\substack{j=1 \\ j\notin I}}^{\ell} T(c_{j}\alpha; N)\Bigg) e(-N\alpha)\,\mathrm{d}\alpha \nonumber \\
  &\leq \prod_{i\in I} \bigg(\int_{0}^{1} |T^{\sharp}(c_i\alpha; N)|^{\ell}\,\mathrm{d}\alpha \bigg)^{1/\ell} \prod_{\substack{j=1\\ j\notin I}}^{\ell} \bigg(\int_{0}^{1} |T(c_{j}\alpha; N)|^{\ell}\,\mathrm{d}\alpha \bigg)^{1/\ell} \nonumber \\
  &\ll_C \bigg(\int_{0}^{1} |T^{\sharp}(\alpha; N)|^{\ell}\,\mathrm{d}\alpha\bigg)^{|I|/\ell} \bigg(\int_{0}^{1} |T(\alpha; N)|^{\ell}\,\mathrm{d}\alpha\bigg)^{1-|I|/\ell}. \nonumber
 \end{align}
 Applying Lemmas \ref{weighted-hua} and \ref{dyadic-hua} (recall $\ell^{*}=\max\{\ell,2H_0\}$), we obtain
 \begin{align*}
  \sum_{\substack{x_1,\ldots, x_{\ell}\in B \\ c_1 x_1+\cdots+ c_{\ell} x_{\ell} = N}} \frac{g(x_1)}{x_1^{\beta}}\cdots \frac{g(x_{\ell})}{x_{\ell}^{\beta}} &\ll_C \sum_{j=1}^{\ell} \bigg(\int_{0}^{1} |T^{\sharp}(\alpha; N)|^{\ell}\,\mathrm{d}\alpha\bigg)^{j/\ell} \bigg(\int_{0}^{1} |T(\alpha; N)|^{\ell}\,\mathrm{d}\alpha\bigg)^{1-j/\ell} \\
  &\ll_{C} \sum_{j=1}^{\ell} \big(N^{\ell\omega - \frac{\ell}{\ell^{*}} + o(1)} \big)^{j/\ell}\, \big(N^{\max\{\ell\omega - \frac{\ell}{\ell^{*}},\,0\} + o(1)} \big)^{1-j/\ell} \\
  &= \sum_{j=1}^{\ell} N^{\max\{\ell(\omega-\frac{1}{\ell^{*}}),\,j(\omega-\frac{1}{\ell^{*}})\} + o(1)} \\
  &= N^{\max\{\ell(\omega-\frac{1}{\ell^{*}}),\,\omega-\frac{1}{\ell^{*}}\} + o(1)}.
 \end{align*}
  
 For $j=1$ or $\ell$, since $\ell^{*} \leq h-1$ and $\omega \geq 1/h$, the exponent satisfies  
 \begin{align*}
  j(\omega-\tfrac{1}{\ell^{*}}) &= h\omega- 1 - ((h-j)(\omega-\tfrac{1}{h}) + j(\tfrac{1}{\ell^{*}}-\tfrac{1}{h})), \\
  &\leq h\omega- 1 - ((h-\ell)(\omega-\tfrac{1}{h}) + \tfrac{1}{h(h-1)}).
 \end{align*}
 Since $f(N)^h/N = N^{h\omega-1+o(1)}$, this shows that the contribution is at most $f(N)^h/N$ with a power saving
 \[ N^{-\eta}, \qquad \eta := (h-\ell)(\omega-\tfrac{1}{h}) + \tfrac{1}{h(h-1)} - \eps \]  
 for any $\eps>0$, completing the proof. \hfill$\square$

\section{General subbases: Theorem \ref{MT2}}\label{rdss}
 We continue to assume that \( B \subseteq \mathbb{N} \) satisfies \eqref{oregvar} and \eqref{hua-type}. Let \( h \geq 2H_0 + 1 \) be an integer, and let $\log x \ll F(x) \ll x^{h\beta-1}\vartheta(x)^h$ be a regularly varying function satisfying \eqref{mainest}. Define
 \[ f(x) := (xF(x))^{1/h} = x^{\omega}\phi(x). \]
 We now construct a probability space whose elements are subsets \( \mathscr{A} \subseteq B \). Each integer \( n \in B \) is included independently with probability
 \begin{equation}
  \Pr(\mathbbm{1}_{\mathscr{A}}(n) = 1) = \mathbb{E}(\mathbbm{1}_{\mathscr{A}}(n)) := \min\bigg\{ c\frac{f(n)}{n^{\beta}\vartheta(n)},\,1 \bigg\}\mathbbm{1}_B(n) \qquad (\forall n \geq 1), \label{presp}
 \end{equation}
 where \( c>0 \) is a constant to be specified later so that \( c f(n) \leq (1+o(1))\,n^{\beta}\vartheta(n) \).\footnote{Hence, in particular, $\min\{cf(n), n^{\beta}\vartheta(n)\} \sim cf(n)$, which is the relevant property of the chosen weights.} The variables \( \mathbbm{1}_{\mathscr{A}}(n) \) are taken to be mutually independent $\{0,1\}$-random variables.

 When the stronger condition \( B(x) \sim x^{\beta}\vartheta(x) \) holds in place of \eqref{oregvar}, the random set \( \mathscr{A} \) has a counting function that almost surely matches \( f \) asymptotically:

 \begin{lem}\label{szofA}
  If \( B(x)\sim x^{\beta}\vartheta(x) \), then $|\mathscr{A} \cap [1, x]| \stackrel{\textnormal{a.s.}}{\sim} c\,\dfrac{\beta}{\omega}\, f(x)$.
 \end{lem}
 \begin{proof}
  Since \( B(x) \) is regularly varying, its asymptotic inverse $b_{\lfloor x \rfloor} := \inf\{y \in \mathbb{R}_{\geq 0} : B(y) \geq \lfloor x \rfloor\}$ is also regularly varying, and satisfies \( b_k^{\beta}\vartheta(b_k) \sim k \). By the strong law of large numbers,
  \[ |\mathscr{A} \cap [1, x]| \stackrel{\text{a.s.}}{\sim} c \sum_{n \leq x} \frac{f(n)}{n^{\beta}\vartheta(n)} \mathbbm{1}_B(n) \sim c \sum_{k \leq B(x)} \frac{f(b_k)}{k}. \]
  Because the composition of regularly varying functions is regularly varying, we can express $f(b_{\lfloor x \rfloor})/\lfloor x \rfloor = x^{\omega / \beta - 1} \xi(x)$ for some slowly varying function \( \xi(x) \). Hence
  \begin{align*}
   |\mathscr{A} \cap [1, x]| &\stackrel{\text{a.s.}}{\sim} c \int_{1}^{B(x)} t^{\omega / \beta - 1} \xi(t)\, \mathrm{d}t \\
   &\sim c \bigg(\int_{1 / B(x)}^{1} u^{\omega / \beta - 1} \frac{\xi(u B(x))}{\xi(B(x))}\, \mathrm{d}u \bigg) f(x).
  \end{align*}

  By Potter bounds (see BGT \cite[Theorem 1.5.6 (i)]{bingham89}), for every \( \delta > 0 \) there exists \( C = C_\delta > 0 \) such that, for all large \( x \geq x_\delta \) and \( C/x \leq u \leq 1 \), we have $\xi(ux)/\xi(x) \leq 2u^{-\delta}$. Choosing \( \delta < \omega / 2\beta \), we decompose $\int_{1 / B(x)}^{1} = \int_{1 / B(x)}^{C / B(x)} + \int_{C / B(x)}^{1}$. Since \( \xi \) is slowly varying,
  \[ u^{\omega / \beta - 1} \frac{\xi(u B(x))}{\xi(B(x))}\,\mathbbm{1}_{(C / B(x), 1]} \xrightarrow[x \to \infty]{} u^{\omega / \beta - 1} \mathbbm{1}_{(0,1]}, \]
  and the dominated convergence theorem gives
  \[ \int_{C / B(x)}^{1} u^{\omega / \beta - 1} \frac{\xi(u B(x))}{\xi(B(x))}\, \mathrm{d}u \to \int_{0}^{1} u^{\omega / \beta - 1}\, \mathrm{d}u = \frac{\beta}{\omega}. \]
  Finally, since \( \xi(x) = x^{o(1)} \), the contribution of \( \int_{1 / B(x)}^{C / B(x)} \) vanishes, completing the proof.
 \end{proof}

\subsection{Expectation of \texorpdfstring{$r_{\A,h}(n)$}{r\_A,h(n)}}\label{rrho}
 We now turn to the computation of the expectation of  
 \[ r_{\A,h}(n) = \sum_{\substack{x_1, \ldots, x_h \in \mathbb{Z}_{\geq 0} \\ x_1 + \cdots + x_h = n}} \mathbbm{1}_{\A}(x_1) \cdots \mathbbm{1}_{\A}(x_h). \]
 To make this quantity more amenable to probabilistic analysis, we introduce a family of auxiliary representation functions. Let \( 1 \leq \ell \leq h \) and \( c_1, \ldots, c_{\ell} \in \mathbb{Z}_{\geq 1} \). A solution \( (x_1, \ldots, x_{\ell}) \) to the equation \( c_1 x_1 + \cdots + c_{\ell} x_{\ell} = n \) is called \emph{exact} if the variables \( x_i \) are pairwise distinct. The corresponding \emph{exact representation function} (associated with \( c_1, \ldots, c_{\ell} \)) is defined by  
 \[ \rho_{\A, \ell}^{(c_1, \ldots, c_{\ell})}(n)
 := \sum_{\substack{x_1, \ldots, x_{\ell} \in \mathbb{Z}_{\geq 0} \\ c_1 x_1 + \cdots + c_{\ell} x_{\ell} = n \\ x_i \text{s distinct}}} \mathbbm{1}_{\A}(x_1) \cdots \mathbbm{1}_{\A}(x_{\ell}). \]
 When \( c_1 = \cdots = c_{\ell} = 1 \), we simply write \( \rho_{\A, \ell}(n) \).

 To illustrate the connection with \( r_{\A,h}(n) \), consider a solution \( (x_1, \ldots, x_h) \) to \( x_1 + \cdots + x_h = n \) where \( x_1 = x_2 \) but the remaining variables \( x_2, \ldots, x_h \) are distinct.  
 In this case, the tuple \( (x_2, \ldots, x_h) \) forms an exact solution to  
 \[ 2x_2 + x_3 + \cdots + x_h = n. \]
 More generally, every non-exact solution of length \( h \) corresponds to an exact solution of an equation with fewer variables, where the coefficients sum to \( h \).  
 This observation yields the decomposition
 \begin{equation}
  r_{\A, h}(n) = \rho_{\A, h}(n) + \sideset{}{'}\sum_{\substack{(c_1, \ldots, c_{\ell})}}
   \rho_{\A, \ell}^{(c_1, \ldots, c_{\ell})}(n), \label{decomprho}
 \end{equation}
 where the sum runs over \( 1 \leq \ell \leq h - 1 \) and all tuples \( (c_1, \ldots, c_{\ell}) \in \mathbb{Z}_{\geq 1}^{\ell} \) satisfying \( c_1 + \cdots + c_{\ell} = h \). By applying Theorem \ref{nonexact} to control the contribution of non-exact solutions, we get that the main term in the expectation of \( r_{\A,h}(n) \) arises from the exact solutions.
 
 \begin{lem}\label{mainlm}
  For any $0<\eta< (\omega-\tfrac{1}{h}) + \tfrac{1}{h(h-1)}$, we have
  \[ \E(r_{\A,h}(n)) = \E(\rho_{\A,h}(n)) + O(n^{-\eta} F(n)). \]
  Moreover,
  \[ \E(\rho_{\A,h}(n)) \sim c^{h} \mathfrak{S}(n) F(n) \qquad (n \in \mathscr{S}). \]
 \end{lem}
 \begin{proof}
  By the decomposition \eqref{decomprho},
  \[ \E(r_{\A,h}(n))
     = \E(\rho_{\A,h}(n))
       + \sideset{}{'}\sum_{\substack{(c_1,\ldots,c_{\ell})}}
         \E(\rho_{\A,\ell}^{(c_1,\ldots,c_{\ell})}(n)). \]
  For each tuple $(c_1, \ldots, c_{\ell})$, Theorem~\ref{nonexact} implies
  \[ \E(\rho_{\A,\ell}^{(c_1,\ldots,c_{\ell})}(n))
     \ll c^{h}
       \sum_{\substack{x_1, \ldots, x_{\ell} \in B \\ c_1x_1 + \cdots + c_{\ell}x_{\ell} = n}}
       \frac{f(x_1)}{x_1^{\beta}\vartheta(x_1)} \cdots \frac{f(x_{\ell})}{x_{\ell}^{\beta}\vartheta(x_{\ell})}
     \ll n^{-\eta} F(n), \]
  proving the first part. 

  For the second, by Corollary \ref{cor1}, we may choose $0<\eta< \tfrac{1}{h(h-1)}$ such that
  \begin{align*}
   \E(\rho_{\A,h}(n))
   &= \sum_{\substack{x_1, \ldots, x_h \in B \\ x_1 + \cdots + x_h = n \\ x_i \text{ distinct}}}
      \min\bigg\{ c\frac{f(x_1)}{x_1^{\beta}\vartheta(x_1)},\,1 \bigg\}
      \cdots
      \min\bigg\{ c\frac{f(x_h)}{x_h^{\beta}\vartheta(x_h)},\,1 \bigg\} \\
   &= (1 + o(1))\, c^{h}
      \sum_{\substack{x_1, \ldots, x_h \in B \\
                      x_1 + \cdots + x_h = n \\
                      \forall j,\, x_j \ge n^{\eta / 2\omega}}}
        \frac{f(x_1)}{x_1^{\beta}\vartheta(x_1)} \cdots \frac{f(x_h)}{x_h^{\beta}\vartheta(x_h)} \\
   &\hspace{4em}
     + O\Bigg(
        n^{-\frac{\eta}{2} + o(1)} \frac{f(n)^{h}}{n}
        + c^{h}
          \sideset{}{'}\sum_{\substack{(c_1,\ldots,c_{\ell})}}
          \sum_{\substack{x_1, \ldots, x_{\ell} \in B \\
                          c_1x_1 + \cdots + c_{\ell}x_{\ell} = n}}
            \frac{f(x_1)}{x_1^{\beta}\vartheta(x_1)} \cdots \frac{f(x_{\ell})}{x_{\ell}^{\beta}\vartheta(x_{\ell)}} \Bigg).
  \end{align*}
  By Theorem \ref{nonexact}, the error term is $O(n^{-\frac{\eta}{2}+o(1)} F(n))$, while \eqref{mainest} yields the main term
  \[ \E(\rho_{\A,h}(n)) = (1 + o(1))\, c^{h} \mathfrak{S}(n) F(n), \]
  completing the proof.
 \end{proof}
 
 Using Lemma \ref{mainlm}, the proof of Theorem \ref{MT2} reduces to showing that, in the probability space defined at the beginning of this section, we have
 \[ r_{\A,h}(n) \stackrel{\textnormal{a.s.}}{\sim} \E(r_{\A,h}(n)). \]
 Since an event of probability one is non-empty, this implies the existence of a subset \( A \subseteq B \) such that \( r_{A,h}(n) \sim \E(r_{\A,h}(n)) \).

 Before proceeding, we prove a lemma that will be used to handle lower-order terms arising in the concentration estimates.

 \begin{lem}\label{psav}
  Let \( 1 \leq \ell \leq h-1 \) and let \( (c_1,\ldots,c_{\ell}) \in \Z_{\ge 1}^{\ell} \)
  satisfy \( c_1+\cdots+c_{\ell} \leq C \).
  Then, for any \( 0 < \eta < (\omega - \tfrac{1}{h}) + \tfrac{1}{h(h-1)} \), we have
  \[ \E(r^{(c_1,\ldots,c_{\ell})}_{\A,\ell}(n)) \ll_{C} n^{-\eta} F(n). \]
 \end{lem}
 \begin{proof}
  As in the decomposition \eqref{decomprho}, we have
  \[ \E(r^{(c_1,\ldots,c_{\ell})}_{\A,\ell}(n)) \leq \E(\rho^{(c_1,\ldots,c_{\ell})}_{\A,\ell}(n)) + \sideset{}{''}\sum_{\substack{(d_1,\ldots,d_{t})}} \E(\rho^{(d_1,\ldots,d_{t})}_{\A,t}(n)), \]
  where the sum runs over \( 1 \leq t \leq \ell-1 \) and tuples \( (d_1,\ldots,d_{t}) \in \Z_{\geq 1}^{t} \) with \( d_1+\cdots+d_{t} \leq C \). As in the proof of Lemma \ref{mainlm}, Theorem \ref{nonexact} implies that \(\E(r^{(c_1,\ldots,c_{\ell})}_{\A,\ell}(n)) \ll n^{-\eta} F(n)\).
 \end{proof}

 We now turn to the two essential concentration lemmas.

\subsection{Concentration of boolean polynomials}
 Let \( n \geq 1 \), and consider independent (not necessarily identically distributed) random variables \( v_1, \ldots, v_n \), each taking values in \( \{0,1\} \). A \emph{boolean polynomial} is a multivariate polynomial of the form
 \[ Y(v_1, \ldots, v_n) = \sum_{i} c_i I_i \in \mathbb{R}[v_1, \ldots, v_n], \]
 where each \( I_i \) is a monomial in the variables \( v_k \). We classify such polynomials according to the following properties:
 \begin{itemize}
  \item \emph{Positive}: if \( c_i > 0 \) for all \( i \);
  \item \emph{Simple}: if the largest exponent of any \( v_k \) in any monomial is at most \( 1 \);
  \item \emph{Homogeneous}: if all monomials have the same total degree;
  \item \emph{Normal}: if \( 0 \leq c_i \leq 1 \) for all \( i \), and the constant term of \( Y \) is zero.
 \end{itemize}

 For a non-empty multiset\footnote{A \emph{multiset} is a collection in which elements may occur more than once.} \( S \subseteq \{v_1, \ldots, v_n\} \), define
 \[ \partial_S := \prod_{v \in S} \partial_{v}, \]
 where \( \partial_v \) denotes differentiation with respect to \( v \). For example, if \( S = \{1,1,2\} \), then
 \[ \partial_S(v_1^3 v_2 v_3 + 3v_1^5) = 6v_1v_3. \]
 We further define the quantities
 \[ \E_j(Y) := \max_{\substack{S \subseteq \{v_1, \ldots, v_n\} \\ \text{multiset},\, |S| = j}} \E(\partial_S Y), \qquad \text{and} \qquad \E'(Y) := \max_{j \geq 1} \E_j(Y). \]

 Two key concentration results for boolean polynomials will be applied:

 \begin{thm}[Kim--Vu \cite{kimvvu00}]\label{kimvu}
  Let \( d \geq 1 \), and suppose \( Y(v_1, \ldots, v_n) \) is a positive, simple boolean polynomial of degree \( d \). Define \( E' := \E'(Y) \) and \( E := \max\{\E(Y), E'\} \). Then, for any \( \lambda \geq 1 \),
  \[ \Pr\big(|Y - \E(Y)| > 8^d \sqrt{d!}\,\lambda^{d - 1/2}(E'E)^{1/2}\big) \ll_d n^{d-1} e^{-\lambda}. \]
 \end{thm}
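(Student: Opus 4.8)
Since Theorem~\ref{kimvu} is the polynomial concentration inequality of Kim and Vu, it is invoked in the paper directly from \cite{kimvvu00}; the following sketches the route one would take to prove it. The argument is an induction on the degree $d$. For $d=1$ one discards the (irrelevant) constant term and writes $Y=\sum_i c_i v_i$ with $c_i\ge 0$, a weighted sum of independent $\{0,1\}$ variables, so that $E'=\max_i c_i$; since $\mathrm{Var}(Y)=\sum_i c_i^2 p_i(1-p_i)\le E'\,\E(Y)\le EE'$ (writing $p_i:=\Pr(v_i=1)$) and each summand is $\le E'$, Bernstein's inequality gives $\Pr\big(|Y-\E(Y)|>8\lambda(EE')^{1/2}\big)\le 2e^{-\lambda}$ for every $\lambda\ge 1$, which is the stated bound for $d=1$.

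For the inductive step, fix $Y$ of degree $d\ge 2$. The structural input is that every first-order partial derivative $Y_i:=\partial_{\{i\}}Y$ is again a positive simple polynomial, now of degree $\le d-1$, and satisfies the nesting estimate $\E_j(Y_i)\le\E_{j+1}(Y)\le E'$ for all $j\ge 0$ (since $\partial_S Y_i=\partial_{S\cup\{i\}}Y$), so that the $E$- and $E'$-parameters of $Y_i$ are both $\le E'$. The same holds for the conditional expectations $g_k:=\E(Y_k\mid v_1,\dots,v_{k-1})$, themselves positive simple polynomials of degree $\le d-1$, and for the weighted sum $\Psi:=\sum_k p_k\,g_k$, whose $E$- and $E'$-parameters are $\le d\,E$ (one checks $\E(\partial_A\Psi)\le dE$ for every multiset $A$, using $\sum_k v_k\partial_{\{k\}}Z\le(\deg Z)\,Z$). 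Applying the inductive hypothesis at degree $\le d-1$ to each $g_k$ and to $\Psi$, and taking a union bound over the $\le n$ relevant indices, produces an event $\mathcal{G}$ with $\Pr(\Omega\setminus\mathcal{G})\ll n^{d-1}e^{-\lambda}$ on which, simultaneously, the increment $X_k-X_{k-1}=(v_k-p_k)g_k$ of the Doob martingale $X_k:=\E(Y\mid v_1,\dots,v_k)$ has absolute value $\le R:=c_d\,E'\lambda^{d-3/2}$, while its predictable quadratic variation $\sum_k p_k(1-p_k)g_k^2\le R\,\Psi$ is $\le V:=C_d\,EE'\lambda^{2d-3}$.

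One then runs a Freedman-type deviation bound for $X_n-X_0=Y-\E(Y)$ from these two estimates. Since $\mathcal{G}$ is not adapted to the filtration $(\mathcal{F}_k)$, one passes to the martingale stopped at the first time an increment bound or a partial-sum quadratic-variation bound would fail --- a genuine stopping time, as $g_k$ is $\mathcal{F}_{k-1}$-measurable --- applies the usual exponential supermartingale bound to the stopped process, and adds back the $\ll n^{d-1}e^{-\lambda}$ probability of the excluded outcomes. With threshold $t:=8^d\sqrt{d!}\,(EE')^{1/2}\lambda^{d-1/2}$, the Gaussian part of the Freedman bound is $\le e^{-c\lambda^2}$ since $t^2/V\asymp\lambda^2$, and the large-deviation part is $\le e^{-c\lambda}$ since $t/R\asymp(E/E')^{1/2}\lambda\ge\lambda$; it is precisely the increment bound $R\asymp E'\lambda^{d-3/2}$, inherited from the exponent $d-3/2$ of the inductive hypothesis, that forces --- and is exactly matched by --- the exponent $d-1/2$ at the present level. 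The main obstacle is this interlock between the two steps: the auxiliary polynomial $\Psi$ and the bad events defining $\mathcal{G}$ must be set up so that both the increment and the quadratic-variation bounds emerge with powers of $\lambda$ small enough for the recursion to close, and the non-adapted conditioning must then be treated honestly through the stopping time. Once that is done, tracking the constants $a_d=8^d\sqrt{d!}$ and $b_d$ through the two nested appeals to the inductive hypothesis is routine but fiddly bookkeeping.
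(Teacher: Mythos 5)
The paper does not prove this theorem; it is invoked as a black box from Kim--Vu \cite{kimvvu00} (and is applied later in the proof of Theorem \ref{MT2}), so there is no internal proof to compare against. You correctly recognized this, and your sketch is a faithful reconstruction of the Kim--Vu argument: induction on degree, with base case by Bernstein; the Doob martingale $X_k=\E(Y\mid v_1,\dots,v_k)$ with increments $(v_k-p_k)g_k$; bounds on $g_k$ and on the predictable quadratic variation obtained by feeding the degree-$(d-1)$ polynomials $g_k$ and $\Psi$ back into the inductive hypothesis; a Freedman/Azuma-type tail bound on the resulting stopped martingale; and a union bound of size $\ll n^{d-1}e^{-\lambda}$ over the bad events. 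Your exponent bookkeeping (how $\lambda^{d-3/2}$ at depth $d-1$ forces $\lambda^{d-1/2}$ at depth $d$) and your observation that the non-adapted good event must be handled via a genuine stopping time are the two points that the actual Kim--Vu proof is careful about, so your outline identifies the right obstacles.
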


 In applications, we take \( \lambda = (d+1)\log n \). This result is most effective when \( 1 \ll \E' \ll \E / (\log n)^{2d} \). The next theorem addresses the complementary regime, where \( \E \) is small.

 \begin{thm}[Vu {\cite[Theorem 1.4]{vvu00mc}}]\label{vuORI}
  Let \( d \geq 2 \), and let \( Y(v_1, \ldots, v_n) = \sum_i c_i I_i \) be a simple, homogeneous, normal boolean polynomial of degree \( d \). For any \( \alpha, \gamma > 0 \), there exists a constant \( \Lambda = \Lambda(d, \alpha, \gamma) \) such that if \( \E_1(Y), \ldots, \E_{d-1}(Y) \leq n^{-\alpha} \), then for any \( 0 < \lambda \leq \E(Y) \),
  \[ \Pr(|Y - \E(Y)| \geq (\lambda \E(Y))^{1/2}) \leq 2d\, e^{-\lambda / 16d\Lambda} + n^{-\gamma}. \]
 \end{thm}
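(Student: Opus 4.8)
The plan is to follow Vu's argument, which runs by induction on the degree $d$ through a Doob martingale in which the boolean variables are revealed one at a time. Let $\mathcal{F}_i := \sigma(v_1,\dots,v_i)$ and set $\Delta_i := \E(Y\mid \mathcal{F}_i) - \E(Y\mid\mathcal{F}_{i-1})$, so that $Y - \E(Y) = \sum_{i=1}^n \Delta_i$ is a sum of martingale differences. Since $Y$ is simple one may write $Y = Y_0^{(i)} + v_i\, Y_1^{(i)}$ with $Y_0^{(i)}, Y_1^{(i)} \in \R[v_1,\dots,\widehat{v_i},\dots,v_n]$ and $Y_1^{(i)} = \partial_{v_i}Y$; because $Y$ is homogeneous of degree $d$ and normal, $Y_1^{(i)}$ is again a simple, homogeneous boolean polynomial of degree $d-1$ with coefficients in $[0,1]$ (hence normal, as $d\ge 2$). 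Using independence of $v_i$ from $\mathcal{F}_{i-1}$ and from $Y_1^{(i)}$, one gets $\Delta_i = (v_i - \E(v_i))\,\E(Y_1^{(i)}\mid\mathcal{F}_{i-1})$, so that $|\Delta_i| \le W_i := \E(Y_1^{(i)}\mid\mathcal{F}_{i-1})$ and $\E(\Delta_i^2\mid\mathcal{F}_{i-1}) \le \E(v_i)\, W_i^2$.

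The first main step is to apply a Freedman/Bennett-type martingale inequality. On the event where the predictable quadratic variation $V := \sum_i \E(v_i)W_i^2$ and the maximal increment $M := \max_i W_i$ satisfy $V \le \E(Y)$ and $M \le 1$, one obtains
\[ \Pr\big(|Y-\E(Y)| \ge t\big) \le 2\exp\!\Big(-\tfrac{t^2}{2(V + Mt/3)}\Big) \le 2\exp\!\Big(-\tfrac{t^2}{4\,\E(Y)}\Big) \]
for $0 < t \le \E(Y)$; taking $t = (\lambda\, \E(Y))^{1/2}$ produces the exponential term $2e^{-c_d\lambda}$ in the claimed bound, with $c_d$ depending only on $d$.

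The second main step is to estimate $V$ and $M$. Each $W_i$ is the conditional expectation of a degree-$(d-1)$ normal homogeneous polynomial whose partial expectations satisfy $\E_j(Y_1^{(i)}) \le \E_{j+1}(Y) \le n^{-\alpha}$, so the inductive hypothesis shows $W_i$ concentrates near $\E(W_i) = \E(Y_1^{(i)})$; by homogeneity (Euler's identity) $\sum_i \E(v_i)\E(Y_1^{(i)}) = d\,\E(Y)$, while $\max_i \E(Y_1^{(i)}) \le \E_1(Y) \le n^{-\alpha}$. Dominating $V$ and $M$ by suitable homogeneous simple majorants and applying the theorem in degree $<d$ to those, a union bound over $i$ and over the $O_d(1)$ auxiliary polynomials gives $V \le \E(Y)$ and $M \le 1$ outside an event of probability $\le n^{-\gamma}$ (after adjusting the parameters), which is the additive error; the constant $K(d,\alpha,\gamma)$ absorbs the $8^d\sqrt{d!}$-type factors accumulated over the $d$ levels of recursion. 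The base case $d = 1$ is the classical Bernstein inequality for sums of independent $\{0,1\}$-variables.

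The main obstacle is making the recursion self-consistent: one must verify that the majorants of $V$ and $M$ inherit, with only a bounded degradation of $\alpha$ and $\gamma$, the hypothesis that all lower partial expectations are $\le n^{-\alpha'}$, and that both the number of recursion levels and the number of auxiliary polynomials at each level depend on $d$ alone, so that neither $K(d,\alpha,\gamma)$ nor the union-bound loss $n^{-\gamma}$ blows up. Extra care is needed because $V$ is not literally a normal polynomial (it involves squares and is inhomogeneous), so one must first replace it by a homogeneous simple majorant and rescale before invoking the inductive step.
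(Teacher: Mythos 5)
This statement is quoted in the paper as a black box: it is Theorem~1.4 of Vu's article \cite{vvu00mc}, cited without proof, so there is no ``paper's own proof'' to compare against. Your reconstruction does track the broad architecture of Vu's argument — a Doob martingale revealing the $v_i$ one at a time, a Freedman/Bennett-type exponential inequality conditional on bounds for the predictable quadratic variation $V$ and the maximal increment $M$, and an induction on the degree $d$ in which the derivative polynomials $\partial_{v_i}Y$ are controlled by the lower-degree case. The self-identified ``main obstacle'' is precisely where the real technical work in Vu's paper lives: $V$ is neither normal nor homogeneous, so it must be dominated by a sum of $O_d(1)$ normalized homogeneous polynomials of lower degree before the induction hypothesis applies, and one must track how $\alpha$ and $\gamma$ degrade across the $d$ recursion levels.

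Two small quantitative points in your sketch need attention. First, Euler's identity gives $\sum_i \E(v_i)\E(Y_1^{(i)}) = d\,\E(Y)$, not $\E(Y)$, so $V$ concentrates around roughly $d\,\E(Y)$; the passage to $2\exp(-t^2/4\E(Y))$ should carry a $d$-dependent constant, which is harmless but should be absorbed into the $K(d,\alpha,\gamma)$ in the exponent rather than silently dropped. Second, the bound $M\le 1$ is not automatic from $\E_1(Y)\le n^{-\alpha}$ alone: $W_i = \E(Y_1^{(i)}\mid\mathcal{F}_{i-1})$ is a random variable that can spike even when its mean is tiny, and you must invoke the degree-$(d-1)$ concentration (with its own failure probability budget) to bound $\max_i W_i$, which is exactly the union-bound loss you mention. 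Neither issue undermines the plan, but both are where a complete write-up would have to be careful. Since the paper only cites the result, these details are out of scope here; for the purpose of this paper the statement is correctly transcribed and applied.
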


 \begin{xrem}
  The random variable \( r_{\A,h}(n) \) can be viewed as a boolean polynomial:
  \[ r_{\A,h}(n) = Y(v_1, \ldots, v_n), \]
  where the \( v_i \) are independent \(\{0,1\}\)-random variables with \( \Pr(v_i = 1) = \E(\mathbbm{1}_{\A}(i)) \). If \( S = \{v_{x_1}, \ldots, v_{x_{\ell}}\} \) is a multiset of indices with \( x_1 + \cdots + x_{\ell} = k \), then 
  \[ \E(\partial_S r_{\A,h}(n)) \leq \frac{h!}{(h-\ell)!}\, \E(r_{\A,h-\ell}(n - k)). \]
 \end{xrem}

\subsection{Theorem \ref{MT2}: Case \texorpdfstring{$\kappa>0$}{kappa>0}}
 Let $c=1$. We apply Kim--Vu (Theorem \ref{kimvu}) to the boolean polynomial $\rho_{\A,h}(n)$. Writing
 \[ E = \E(\rho_{\A,h}(n)),\qquad E' = \E'(\rho_{\A,h}(n))\ll 1+\max_{1\leq \ell\leq h-1}\max_{k\leq n}\E(r_{\A,\ell}(k)), \]
 and using Lemmas \ref{mainlm} and \ref{psav}, we obtain the bounds
 \[ E \asymp F(n)=n^{\kappa+o(1)},\qquad E'\ll n^{\max\{0,\kappa-\eta\}+o(1)}. \]
 Take $\lambda=(h+1)\log n$. Then
 \begin{align*}
  \lambda^{h-\frac{1}{2}}(E'E)^{1/2}
  &< (h+1)^h(\log n)^h \,\bigg(\frac{E'}{E}\bigg)^{1/2} E \\
  &\ll (\log n)^h\bigg(\frac{n^{\max\{0,\kappa-\eta\}+o(1)}}{n^{\kappa+o(1)}}\bigg)^{1/2} E
  = n^{-\min\{\kappa,\eta\}/2+o(1)} E.
 \end{align*}
 Hence Kim--Vu gives
 \[ \Pr\Big(|\rho_{\A,h}(n)-\E(\rho_{\A,h}(n))| > C_h\,n^{-\min\{\kappa,\eta\}/2+o(1)} E\Big) \ll n^{-2}, \]
 for some constant $C_h>0$ depending only on $h$. Since the right-hand side has a convergent series over $n$, the Borel--Cantelli lemma implies
 \[ \rho_{\A,h}(n)\stackrel{\text{a.s.}}{\sim}\E(\rho_{\A,h}(n))\sim c^h\mathfrak{S}(n)F(n). \]

 It remains to show that every non-exact contribution is a.s. negligible. By \eqref{decomprho} it suffices to prove that for each $1\leq \ell\leq h-1$ and each tuple $(c_1,\dots,c_\ell)$ with $c_1+\cdots+c_\ell\leq h$,
 \[ \rho_{\A,\ell}^{(c_1,\ldots,c_{\ell})}(n) \stackrel{\text{a.s.}}{=} o(F(n)). \]
 Applying Kim--Vu to the boolean polynomial $\rho_{\A,\ell}^{(c_1,\ldots,c_{\ell})}(n)$, Lemma \ref{psav} gives
 \[ E,\;E'\ll 1+n^{-\eta}F(n)=n^{\max\{0,\kappa-\eta\}+o(1)}. \]
 With the same choice $\lambda=(h+1)\log n,$ this yields
 \[ \Pr\Big(|\rho_{\A,\ell}^{(c_1,\ldots,c_{\ell})}(n)-\E(\rho_{\A,\ell}^{(c_1,\ldots,c_{\ell})}(n))| > D_h\, n^{\max\{0,\kappa-\eta\}+o(1)}\Big)\ll n^{-2}, \]
 for some constant $D_h>0$ depending only on $h$. Using Borel--Cantelli and Lemma \ref{psav}, which gives $\E(\rho_{\A,\ell}^{(c_1,\ldots,c_{\ell})}(n))\ll n^{-\eta}F(n)$, we find
 \[ \rho_{\A,\ell}^{(c_1,\ldots,c_{\ell})}(n) \stackrel{\text{a.s.}}{\ll} \E(\rho_{\A,\ell}^{(c_1,\ldots,c_{\ell})}(n)) + n^{\max\{0,\kappa-\eta\}+o(1)} = o(F(n)), \]
 which completes the proof.\hfill$\square$\medskip

 We now focus on the case $\kappa=0$.

\subsection{\texorpdfstring{$\delta$}{delta}-small and \texorpdfstring{$\delta$}{delta}-normal solutions}
 Following Vu \cite{vvu00wp}, to estimate \( r_{\A,h}(n) \), we divide the contributing solutions into two types according to a parameter \( \delta \). For \( 0 < \delta < 1 \), define
 \begin{equation}\label{deltans}
  \begin{aligned}
   r^{(\delta\textnormal{-small})}_{\A,h}(n)
   &:= \sum_{\substack{x_1,\ldots,x_h \in \N \\ x_1+\cdots+x_h = n \\ \exists j : x_j < n^{\delta}}}
       \mathbbm{1}_{\A}(x_1)\cdots\mathbbm{1}_{\A}(x_h), \\
   r^{(\delta\textnormal{-normal})}_{\A,h}(n)
   &:= \sum_{\substack{x_1,\ldots,x_h \in \N \\ x_1+\cdots+x_h = n \\ \forall j,~x_j \geq n^{\delta}}}
       \mathbbm{1}_{\A}(x_1)\cdots\mathbbm{1}_{\A}(x_h).
  \end{aligned}
 \end{equation}
 Thus,
 \[ r_{\A,h}(n) = r^{(\delta\textnormal{-small})}_{\A,h}(n) + r^{(\delta\textnormal{-normal})}_{\A,h}(n). \]
 Analogous quantities \( \rho^{(\delta\textnormal{-small})}_{\A,h}(n) \) and \( \rho^{(\delta\textnormal{-normal})}_{\A,h}(n) \) are defined similarly. We shall show that the contribution of \(\delta\)-small solutions is negligible on average.

 \begin{lem}\label{expdsml}
  Let \( 0 < \eta < (\omega - \tfrac{1}{h}) + \tfrac{1}{h(h-1)} \). For every \( 0 < \delta < \eta \), we have
  \[ \E(r^{(\delta\textnormal{-small})}_{\A,h}(n)) \ll n^{\omega\delta - \eta} F(n). \]
 \end{lem}
 \begin{proof}
  We begin by bounding the expectation:
  \begin{align*}
   \E(r^{(\delta\textnormal{-small})}_{\A,h}(n)) &\leq \sum_{k \leq n^{\delta}} \E(r_{\A,h-1}(n-k)\,\mathbbm{1}_{\A}(k)) \\
   &\leq c \sum_{k \leq n^{\delta}} \frac{f(k)}{k^{\beta}\vartheta(k)}\,\mathbbm{1}_{B}(k)\, \E(r_{\A,h-1}(n-k)\mid \mathbbm{1}_{\A}(k)=1).
  \end{align*}
  For \( k \leq n^{\delta} \), Lemma~\ref{psav} yields
  \[ \E(r_{\A,h-1}(n-k)\mid \mathbbm{1}_{\A}(k)=1) \leq \sum_{\ell=1}^{h-1}\E(r_{\A,h-\ell}(n-\ell k)) \ll n^{-\eta}F(n). \]
  Substituting back gives
  \[ \E(r^{(\delta\textnormal{-small})}_{\A,h}(n)) \ll n^{-\eta}F(n) \sum_{k \leq n^{\delta}} \frac{f(k)}{k^{\beta}\vartheta(k)}\,\mathbbm{1}_{B}(k). \]
  Since \(f(k)\) is regularly varying and \( k^{\beta}\vartheta(k) \asymp B(k)\), the sum is asymptotically proportional to \(f(n^{\delta})\) (by the same argument as in Lemma \ref{szofA}). Hence
  \[ \E(r^{(\delta\textnormal{-small})}_{\A,h}(n)) \ll n^{-\eta}F(n)\, f(n^{\delta}) = n^{\omega\delta - \eta + o(1)} F(n). \]
  Since the range of $\eta$ is open and $\delta<\eta$, take $0<\eps<\eta-\delta$ and replace $\eta$ by $\eta-\eps$; for large $n$, the $o(1)$ term is then absorbed.
 \end{proof}

\subsection{Disjoint families of representations}
 Let $\widehat{r}_{\A,\ell}(n)$ denote the maximum size of a disjoint family of representations $R = (x_1,\ldots,x_{\ell}) \in \A^\ell$ satisfying $x_1+\cdots+x_{\ell}= n$. In other words, $\widehat{r}_{\A,\ell}(n) = |\mathcal{M}|$, where $\mathcal{M}$ is a maximal disjoint family (abbreviated \emph{maxdisfam}) of such representations. By definition, for every representation $R$, there exists $S\in \mathcal{M}$ such that $S\cap R\neq \varnothing$. This yields the inequality
 \begin{equation}
  r_{\A,\ell}(n) \leq \sum_{\substack{k \in S \\ S\in \mathcal{M}}} r_{\A,\ell-1}(n-k) \leq \ell! \, \widehat{r}_{\A,\ell}(n)\, \Big(\max_{k\leq n} r_{\A,\ell-1}(k) \Big). \label{mchdeg}
 \end{equation}
 Whenever a representation function refers to the size of a maximum disjoint family, we append a `` $\widehat{\ }$ ’’ to its notation; for instance, $\widehat{\rho}^{(\delta\text{-small})}_{\A,\ell}$ denotes the size of the largest maxdisfam of $\delta$-small exact representations.

 \begin{lem}\label{prodHat}
  For every $2\leq \ell\leq h$, we have
  \[ \mathrm{R}_{\A,\ell}(n) \ll \widehat{\mathrm{R}}_{\A,\ell}(n)\,
    \Big(\max_{k\leq n} \widehat{r}_{\A,\ell-1}(k) \Big)
    \cdots
    \Big(\max_{k\leq n} \widehat{r}_{\A,2}(k) \Big), \]
  where $\mathrm{R}$ stands for $r$, $\rho$, or $\rho^{(\delta\textnormal{-small})}$.
 \end{lem}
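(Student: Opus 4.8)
The plan is to iterate the basic comparison inequality \eqref{mchdeg} down from length $\ell$ to length $2$. Fix one of $\mathrm{R} \in \{r, \rho, \rho^{(\delta\textnormal{-small})}\}$; the argument is identical in all three cases since passing to exact and/or $\delta$-small representations only restricts the family of tuples being counted, and the inequality \eqref{mchdeg} (and its proof via maximal disjoint families) is insensitive to such restrictions — any subfamily of representations still has a maximal disjoint subfamily $\mathcal{M}$ with the property that every representation in the subfamily meets some member of $\mathcal{M}$. First I would record the length-$\ell$ version of \eqref{mchdeg} in the form
\[
\mathrm{R}_{\A,\ell}(n) \leq \ell!\,\widehat{\mathrm{R}}_{\A,\ell}(n)\cdot\Big(\max_{k\leq n} r_{\A,\ell-1}(k)\Big),
\]
noting that when $\mathrm{R} = \rho$ or $\rho^{(\delta\textnormal{-small})}$ the residual length-$(\ell-1)$ count on the right can be taken to be $r_{\A,\ell-1}$ (dropping the exactness/smallness constraint only makes it larger), so that after the first step we are reduced to controlling ordinary $r_{\A,\ell-1}$.

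Next I would apply \eqref{mchdeg} again, now to $r_{\A,\ell-1}(k)$ for each $k \leq n$, to get $r_{\A,\ell-1}(k) \ll \widehat{r}_{\A,\ell-1}(k)\cdot\max_{k'\leq k} r_{\A,\ell-2}(k')$, and so on, peeling off one factor $\max_{k\leq n}\widehat{r}_{\A,j}(k)$ at each stage for $j = \ell-1, \ell-2, \ldots, 2$. Since $\max_{k'\leq k} r_{\A,j}(k') \leq \max_{k\leq n} r_{\A,j}(k)$ for $k \leq n$, the maxima telescope cleanly: after $\ell-1$ applications the right-hand side becomes
\[
\widehat{\mathrm{R}}_{\A,\ell}(n)\Big(\max_{k\leq n}\widehat{r}_{\A,\ell-1}(k)\Big)\cdots\Big(\max_{k\leq n}\widehat{r}_{\A,2}(k)\Big)
\]
up to a multiplicative constant depending only on $\ell$ (a product of factorials $\ell!(\ell-1)!\cdots 2!$), which is absorbed into the $\ll$ since $\ell \leq h$ and dependence on $h$ is suppressed throughout. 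The recursion bottoms out at length $2$: \eqref{mchdeg} with $\ell = 2$ gives $r_{\A,2}(k) \leq 2\,\widehat{r}_{\A,2}(k)\cdot r_{\A,1}(k)$, and since $r_{\A,1}(k) = \mathbbm{1}_{\A}(k) \leq 1$ there is no further factor to extract — this is precisely why the product in the statement stops at $\widehat{r}_{\A,2}$.

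There is no real obstacle here; the only point requiring a word of care is the interchange that lets the inner maxima $\max_{k'\leq k}$ be replaced by the outer $\max_{k\leq n}$ at every level, which is immediate from monotonicity of the range of the maximum, together with the observation that the case distinction on $\mathrm{R}$ collapses after the first step. I would present the argument as a short induction on $\ell$: the base case $\ell = 2$ is \eqref{mchdeg} directly, and the inductive step applies \eqref{mchdeg} once to reduce $\mathrm{R}_{\A,\ell}(n)$ to $\widehat{\mathrm{R}}_{\A,\ell}(n)$ times $\max_{k\leq n} r_{\A,\ell-1}(k)$, then invokes the inductive hypothesis (for $\mathrm{R} = r$ at length $\ell - 1$) on the latter.
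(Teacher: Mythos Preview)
Your proposal is correct and follows essentially the same approach as the paper: apply \eqref{mchdeg} once to $\mathrm{R}_{\A,\ell}$, then iterate it on the residual $r_{\A,j}$ factors down to $j=2$, absorbing the factorial constants into the $\ll$. The only cosmetic difference is that the paper stops the recursion at $j=3$ and then invokes the direct inequality $r_{\A,2}(k) \leq 2\widehat{r}_{\A,2}(k)$, whereas you obtain this last step as one more instance of \eqref{mchdeg} together with $r_{\A,1}\leq 1$; these are of course equivalent.
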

 \begin{proof}
  Applying \eqref{mchdeg} to $\mathrm{R}$, the same recursive bound holds for $r_{\A,\ell-t}$ with $1\leq t\leq \ell-2$. Thus
  \[ \mathrm{R}_{\A,\ell}(n)
    \ll \widehat{\mathrm{R}}_{\A,\ell}(n)
    \Big(\max_{k\leq n} \widehat{r}_{\A,\ell-1}(k) \Big)
    \cdots
    \Big(\max_{k\leq n} \widehat{r}_{\A,3}(k) \Big)
    \Big(\max_{k\leq n} r_{\A,2}(k) \Big). \]
  Since $r_{\A,2}(k) \leq 2\,\widehat{r}_{\A,2}(k)$, the claim follows.
 \end{proof}

 Lemma \ref{prodHat} will be used in conjunction with the following standard result \cite[Lemma~1]{erdtet90}.

 \begin{lem}[Disjointness lemma]\label{disjlm}
  Let $\mathscr{E} = \{E_1,E_2,\ldots\}$ be a family of events, and define $S := \sum_{E\in\mathscr{E}} \mathbbm{1}_E$. If $\E(S) < \infty$, then for every $k\geq 1$ we have
  \[ \Pr(\exists\mathcal{D}\subseteq \mathscr{E} \textnormal{ disfam with }|\mathcal{D}| = k)
    \leq \sum_{\substack{\mathcal{J}\subseteq \mathscr{E}\textnormal{ disfam} \\ |\mathcal{J}| = k}}
    \Pr\bigg( \bigwedge_{E\in\mathcal{J}} E\bigg)
    \leq \frac{\E(S)^{k}}{k!}. \]
 \end{lem}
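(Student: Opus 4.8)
The plan is to obtain the left-hand inequality from a bare union bound and the right-hand one from the mutual independence of the events comprising a disjoint family, followed by a crude symmetrization over orderings. The point to keep in mind is the structural meaning of ``disfam'' in our applications: the members of $\mathscr{E}$ are events of the form $E_R = \bigcap_{x \in R}\{\mathbbm{1}_{\A}(x) = 1\}$ indexed by representations $R$, and $\mathcal{D}$ is a disjoint family when it consists of events $E_{R_1},\ldots,E_{R_k}$ whose representations are pairwise disjoint as sets of coordinates. Since the variables $\mathbbm{1}_{\A}(x)$ are mutually independent, the events of a disjoint family depend on pairwise disjoint blocks of these variables, hence are mutually independent, and in particular
\[ \Pr\Big(\bigwedge_{E\in\mathcal{J}}E\Big) = \prod_{E\in\mathcal{J}}\Pr(E) \]
for every disjoint family $\mathcal{J}$; in the abstract statement one may simply take this factorization to be part of the definition of ``disjoint family''.

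For the first inequality I would observe that, as $\mathscr{E} = \{E_1,E_2,\ldots\}$ is countable, there are only countably many disjoint families of a given size $k$, and the event that some such family has \emph{all} of its members occurring is exactly $\bigcup_{\mathcal{D}}\bigwedge_{E\in\mathcal{D}}E$; countable subadditivity of $\Pr$ then yields the stated bound $\sum_{\mathcal{J}}\Pr(\bigwedge_{E\in\mathcal{J}}E)$. For the second inequality I would replace each $\Pr(\bigwedge_{E\in\mathcal{J}}E)$ by $\prod_{E\in\mathcal{J}}\Pr(E)$ using the independence above, then pass from unordered families to ordered tuples: each disjoint family of size $k$ accounts for exactly $k!$ ordered $k$-tuples of distinct events, and dropping both the distinctness and the disjointness constraints only increases a sum of nonnegative terms, so
\[ \sum_{\substack{\mathcal{J}\subseteq\mathscr{E}\,\textnormal{ disfam}\\ |\mathcal{J}|=k}}\ \prod_{E\in\mathcal{J}}\Pr(E)\ \leq\ \frac{1}{k!}\sum_{(E_1,\ldots,E_k)\in\mathscr{E}^{k}}\Pr(E_1)\cdots\Pr(E_k)\ =\ \frac{1}{k!}\Big(\sum_{E\in\mathscr{E}}\Pr(E)\Big)^{k}. \]
Finally $\sum_{E\in\mathscr{E}}\Pr(E) = \sum_{E\in\mathscr{E}}\E(\mathbbm{1}_{E}) = \E(S)$ by Tonelli's theorem (all summands are nonnegative), which is finite by hypothesis; this also legitimizes the factorization of $(\sum_{E}\Pr(E))^{k}$ used above and delivers the bound $\E(S)^{k}/k!$.

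The one genuinely delicate step is the independence/factorization $\Pr(\bigwedge_{E\in\mathcal{J}}E) = \prod_{E\in\mathcal{J}}\Pr(E)$: the abstract formulation conceals the fact that ``disjoint'' must mean ``supported on disjoint blocks of an underlying independent family'', and without such a hypothesis even the inequality $\Pr(\bigwedge_{E\in\mathcal{J}}E)\leq\prod_{E\in\mathcal{J}}\Pr(E)$ can fail (e.g.\ for positively correlated events). Everything else is the routine union-bound-plus-Tonelli computation, with the $1/k!$ being nothing more than the overcount incurred by ordering the members of a family.
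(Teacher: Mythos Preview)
Your proposal is correct and follows essentially the same route as the paper: factor $\Pr(\bigwedge_{E\in\mathcal{J}}E)=\prod_{E\in\mathcal{J}}\Pr(E)$ by independence, then bound the sum over disjoint $k$-subsets by $\frac{1}{k!}(\sum_{E}\Pr(E))^{k}=\E(S)^{k}/k!$. The paper's proof is a single displayed line that omits the union-bound justification for the first inequality and leaves the independence hypothesis implicit; your added commentary on why ``disfam'' forces independence in the intended application is accurate and worth keeping.
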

 \begin{proof}
  \begin{align*}
   \sum_{\substack{\mathcal{J}\subseteq \mathscr{E}\textnormal{ disfam} \\ |\mathcal{J}| = k}}
   \Pr\bigg( \bigwedge_{E\in\mathcal{J}} E\bigg)
   &= \sum_{\substack{\mathcal{J}\subseteq \mathscr{E}\textnormal{ disfam} \\ |\mathcal{J}| = k}}
     \prod_{E\in\mathcal{J}} \Pr(E)
   \leq \frac{1}{k!}\bigg( \sum_{E\in \mathscr{E}} \Pr(E) \bigg)^{k}
   = \frac{\E(S)^{k}}{k!}. \qedhere
  \end{align*}
 \end{proof}

 \begin{xrem}
  If a maxdisfam of size greater than $k$ exists, then a disjoint family of size $k$ exists as well. Hence Lemma \ref{disjlm} gives, using $k!\geq k^k e^{-k}$,
  \begin{align*}
   \Pr(\exists \text{maxdisfam of size} \geq x)
   &\leq \bigg(\frac{e\,\E}{\lceil x\rceil}\bigg)^{\lceil x\rceil} \qquad (\text{for real }x\geq 1) \\
   &\leq \bigg(\frac{e\,\E}{x}\bigg)^{x} \phantom{\bigg(\frac{e\,\E}{\lceil x\rceil}\bigg)^{\lceil x\rceil}} \negphantom{\bigg(\frac{e\,\E}{x}\bigg)^{x}} \qquad (\text{for real }x\geq 1+\E).
  \end{align*}
 \end{xrem}

\subsection{Theorem \ref{MT2}: Case \texorpdfstring{$\kappa = 0$}{kappa =0}} 
 In this case $\omega=1/h$ and $F(x)=x^{o(1)}$. Fix parameters $0<\delta<\eta<\tfrac{1}{h(h-1)}$. We treat three contributions separately: the $\delta$-normal term $\rho_{\A,h}^{(\delta\text{-normal})}(n)$, the $\delta$-small term $\rho_{\A,h}^{(\delta\text{-small})}(n)$, and the non-exact terms $\rho_{\A,\ell}^{(c_1,\dots,c_\ell)}(n)$ with $1\leq\ell\leq h-1$. By \eqref{decomprho}, showing that the first term is a.s. asymptotic to its expectation and that the remaining terms are a.s. $O(1)$ (indeed $o(F(n))$ since $F(n)\to\infty$) will prove the theorem.

\medskip\noindent
 $\textbf{(1)}~\text{\underline{Concentration for the $\delta$-normal part.}}$
 The function $\rho_{\A,h}^{(\delta\text{-normal})}(n)$ is a homogeneous, simple boolean polynomial of degree $h$. Since each monomial of $\rho_{\A,h}(n)$ appears at most $h!$ times, we verify the hypotheses of Theorem \ref{vuORI} (Vu) for the normalized polynomial $\tfrac{1}{h!}\rho_{\A,h}^{(\delta\text{-normal})}(n)$.

 Let $1\leq j\leq h-1$. By Lemma \ref{psav} we have for $n^{\delta}\leq k\leq n$
 \[ \E(r_{\A,h-j}(k))\ll k^{-\eta}F(k). \]
 Hence
 \[ \E_j(\rho_{\A,h}^{(\delta\text{-normal})}(n)) \leq \max_{n^{\delta}\leq k\leq n} \E(r_{\A,h-j}(k))
 \ll n^{-\eta\delta+o(1)}. \]
 Thus, choosing $0<\alpha<\eta\delta$ we have the smallness condition $\E_1,\dots,\E_{h-1}\leq n^{-\alpha}$ required by Theorem \ref{vuORI}. Take $\gamma=2$ and let $\Lambda=\Lambda(h,\alpha,\gamma)$ be the constant provided by Vu. Put
 \[ \lambda := 32h\Lambda\log n. \]
 Then Theorem \ref{vuORI} yields
 \[ \Pr\Big(|\rho_{\A,h}^{(\delta\text{-normal})}(n)-\E(\rho_{\A,h}^{(\delta\text{-normal})}(n))| \geq (h!\lambda\,\E(\rho_{\A,h}^{(\delta\text{-normal})}(n)))^{1/2}\Big)
 \ll n^{-2}. \]

 If $F(n)/\log n\to\infty$, then by Lemmas \ref{mainlm} and \ref{expdsml} the expectation $\E(\rho_{\A,h}^{(\delta\text{-normal})}(n))\asymp F(n)$, so $(\lambda \E)^{1/2}=o(\E)$ and the deviation bound is negligible; hence by Borel--Cantelli
 \[ \rho_{\A,h}^{(\delta\text{-normal})}(n)\stackrel{\text{a.s.}}{\sim} \E(\rho_{\A,h}^{(\delta\text{-normal})}(n))\sim c^h\mathfrak{S}(n)F(n), \]
 and we choose $c=1$. If instead only $F(n)\gg \log n$ (but $F(n)/\log n$ need not tend to $\infty$), one may choose the normalization constant $c>0$ sufficiently large when defining the model so that $\E(\rho_{\A,h}(n)) > \lambda$ for large $n$; the same argument then yields $\rho_{\A,h}^{(\delta\text{-normal})}(n)\stackrel{\text{a.s.}}{\asymp}F(n)$.

 \medskip\noindent
 $\textbf{(2)}~\text{\underline{The $\delta$-small contribution.}}$
  We show the $\delta$-small contribution is a.s.\ bounded. First note the relevant expectation bounds (from Lemmas \ref{psav} and \ref{expdsml}):
 \[ \E(r_{\A,\ell}(k))\ll k^{-\eta+o(1)} \quad(1\leq\ell\leq h-1),
  \qquad
  \E(\rho_{\A,h}^{(\delta\text{-small})}(n))\ll n^{\delta/h-\eta+o(1)}. \]
 Fix an integer \(T\ge1\). For any fixed \(k\leq n\), Lemma \ref{disjlm} gives
 \[ \Pr(\widehat{r}_{\A,\ell}(k)\geq T) \leq \bigg(\frac{e\,\E(r_{\A,\ell}(k))}{T}\bigg)^{T}
  \ll \bigg(\frac{e}{T}\bigg)^{T} k^{-T\eta+o(1)}. \]
 Summing this bound over \(k\leq n\) yields
 \[ \Pr\Big(\max_{k\leq n}\widehat{r}_{\A,\ell}(k)\geq T\Big) \leq \sum_{k\leq n} \Pr(\widehat{r}_{\A,\ell}(k)\geq T) \ll \bigg(\frac{e}{T}\bigg)^{T} n^{1-T\eta+o(1)}. \]
 The same argument applied to the $\delta$-small family gives
 \[ \Pr(\widehat{\rho}_{\A,h}^{(\delta\text{-small})}(n)\geq T) \ll \bigg(\frac{e}{T}\bigg)^{T} n^{-T(\eta-\delta/h)+o(1)}. \]
 Since \(\eta>\delta/h\), both exponents \(1-T\eta\) and \(-T(\eta-\delta/h)\) are negative for large $T$. Taking $T> 3/(\eta-\delta/h)$, both right-hand sides are \(\ll n^{-2+o(1)}\), hence summable in \(n\). By Borel--Cantelli it follows that for every fixed \(1 \leq \ell\leq h-1\),
 \[ \max_{k\leq n}\widehat{r}_{\A,\ell}(k)\stackrel{\text{a.s.}}{\ll}1, \qquad \widehat{\rho}_{\A,h}^{(\delta\text{-small})}(n)\stackrel{\text{a.s.}}{\ll}1. \]
 Substituting these bounds into Lemma \ref{prodHat} yields
 \[ \rho_{\A,h}^{(\delta\text{-small})}(n) \ll \widehat{\rho}_{\A,h}^{(\delta\text{-small})}(n)
     \Big(\max_{k\leq n}\widehat{r}_{\A,h-1}(k)\Big)\cdots \Big(\max_{k\leq n}\widehat{r}_{\A,2}(k)\Big)\stackrel{\text{a.s.}}{\ll}1, \]
 as required.

 \medskip\noindent
 $\textbf{(3)}~\text{\underline{Remaining non-exact terms.}}$
 Finally, we treat each fixed non-exact tuple $(c_1,\dots,c_\ell)$ with $1\leq \ell\leq h-1$. If $h=2$ the claim is trivial. For $h\geq 3$ the same disjointness argument applies to show that the maximal disjoint family for these non-exact representations is a.s. bounded, since their expectations are $\ll n^{-\eta+o(1)}$ by Lemma \ref{psav}. Using the (same) product bound as in Lemma \ref{prodHat} (or its obvious variant adapted to these representations) we obtain
 \[ \rho_{\A,\ell}^{(c_1,\dots,c_\ell)}(n)\stackrel{\text{a.s.}}{\ll}1. \]

 Combining the three parts, \(\rho_{\A,h}^{(\delta\text{-normal})}(n)\) provides the main contribution to $r_{\A,h}(n)$ and all other contributions are a.s. negligible. This completes the proof.\hfill$\square$

\subsection{Proof of Corollary \ref{cor-tech}}  
 The condition $\mathcal{L}(x^{\lambda})\asymp_{\lambda}\mathcal{L}(x)$ implies $\mathcal{L}(x)\asymp \psi(x)$ for some slowly varying function $\psi$, by \cite[Theorems 1.3.1 and 2.2.7]{bingham89}. Indeed, this assumption means that $\mathcal{L}(e^{x}) \asymp_{\lambda} \mathcal{L}(e^{\lambda x})$, i.e.\ that the function $\mathcal{L}(e^{x})$ is $O$-regularly varying. By the representation theorem for $O$-regularly varying functions \cite[Theorem 2.2.7]{bingham89}, there exist bounded measurable functions $\alpha,\xi:\R_{>0}\to\R$ such that
 \[  \psi(e^{x})=\exp\bigg(\alpha(x)+\int_{1}^{x}\frac{\xi(t)}{t}\,\mathrm{d}t\bigg). \]
 Writing $y=e^{x}$ and changing variables gives
 \[ \mathcal{L}(y)\asymp \exp\bigg(\int_{1}^{\log y}\frac{\xi(t)}{t}\,\mathrm{d}t\bigg) = \exp\bigg(\int_{1}^{y}\frac{\xi(\log u)}{u\log u}\,\mathrm{d}u\bigg). \]
 Setting $\epsilon(u):=\xi(\log u)/\log u$, we have $\epsilon(u)\to0$ as $u\to\infty$ since $\xi$ is bounded. Therefore,
 \[ \mathcal{L}(x)\asymp \psi(x):=\exp\!\bigg(\int_{1}^{x}\frac{\epsilon(t)}{t}\,\mathrm{d}t\bigg), \]
 and by \cite[Theorem 1.3.1]{bingham89}, this $\psi$ is slowly varying.

 Define now $G(x):=x^{\kappa}\psi(x)$ and $g(x):=(xG(x))^{1/h}$. Let $0<\eta<\tfrac{1}{h(h-1)}$. Applying Corollary \ref{cor1} to the function $g$ yields
 \[ \sum_{\substack{x_1,\ldots,x_h\in B \\ x_1+\cdots+x_h=n}}
    \frac{g(x_1)}{x_1^{\beta}\vartheta(x_1)}\cdots
    \frac{g(x_h)}{x_h^{\beta}\vartheta(x_h)}
  = \sum_{\substack{x_1,\ldots,x_h\in B \\ x_1+\cdots+x_h=n \\ \forall j,\,x_j\ge n^{\eta/2\omega}}}
    \frac{g(x_1)}{x_1^{\beta}\vartheta(x_1)}\cdots
    \frac{g(x_h)}{x_h^{\beta}\vartheta(x_h)}
  + O(n^{-\frac{\eta}{2}+o(1)}G(n)). \]
 By uniform convergence for $O$-regularly varying functions \cite[Theorem 2.0.8]{bingham89} (applied to $x\mapsto\psi(e^{x})$), we have $\psi(x_j)\asymp\psi(n)$ uniformly for $n^{\eta/2\omega}\leq x_j\leq n$. Thus
 \[ \sum_{\substack{x_1,\ldots,x_h\in B \\ x_1+\cdots+x_h=n \\ \forall j,\,x_j\geq n^{\eta/2\omega}}}
    \frac{g(x_1)}{x_1^{\beta}\vartheta(x_1)}\cdots
    \frac{g(x_h)}{x_h^{\beta}\vartheta(x_h)}
    \asymp \psi(n) \sum_{\substack{x_1,\ldots,x_h\in B \\ x_1+\cdots+x_h=n \\ \forall j,\,x_j\geq n^{\eta/2\omega}}} \frac{x_1^{\frac{1+\kappa}{h}}}{x_1^{\beta}\vartheta(x_1)}\cdots \frac{x_h^{\frac{1+\kappa}{h}}}{x_h^{\beta}\vartheta(x_h)}. \]
 Applying Corollary \ref{cor1} again with $f(x)=x^{(1+\kappa)/h}$ yields
 \[ \sum_{\substack{x_1,\ldots,x_h\in B \\ x_1+\cdots+x_h=n \\ \forall j,\,x_j\ge n^{\eta/2\omega}}}
    \frac{x_1^{\frac{1+\kappa}{h}}}{x_1^{\beta}\vartheta(x_1)}\cdots
    \frac{x_h^{\frac{1+\kappa}{h}}}{x_h^{\beta}\vartheta(x_h)}
  = (1+o(1))\,\mathfrak{S}(n)\,n^{\kappa}
    + O(n^{-\frac{\eta}{2}+o(1)}n^{\kappa}), \]
 by \eqref{mainest} with $F(x)=x^{\kappa}$, for $n\in\mathscr{S}$. Consequently,
 \[ \sum_{\substack{x_1,\ldots,x_h\in B \\ x_1+\cdots+x_h=n}} \frac{g(x_1)}{x_1^{\beta}\vartheta(x_1)}\cdots
    \frac{g(x_h)}{x_h^{\beta}\vartheta(x_h)} = \mathfrak{T}(n)\,G(n), \]
 where $\mathfrak{T}(n)\asymp (1+o(1))\,\mathfrak{S}(n) + O(n^{-\frac{\eta}{2}+o(1)})\asymp 1$ on $\mathscr{S}$. Since $G(n)\gg\log n$ and $G(n)\ll n^{h\beta-1}\vartheta(n)^h$ by assumption, Theorem \ref{MT2} (ii) applies to $G$, yielding a subset $A\subseteq B$ with
 \[ r_{A,h}(n)\asymp G(n)\asymp n^{\kappa}\mathcal{L}(n) \qquad(n\in\mathscr{S}), \]
 completing the proof.\hfill$\square$

\subsection{Proof of Corollary \ref{cor-tech2}}  
 Write $g(x) = x^{\omega}\phi(x)$, so $\phi(x) := \psi(x)^{1/h}$. By the uniform convergence theorem (BGT \cite[Theorem 1.2.1]{bingham89}), given $0< \mu < 1$ and $\eps > 0$ there exists $x_{\mu,\eps}$ such that, for every $x\geq x_{\mu,\eps}$,
 \[ \bigg|\frac{\phi(\lambda x)}{\phi(x)} - 1\bigg| < \eps, \qquad \forall \lambda\in[\mu,1]. \]
 Taking $\mu=1/j$ and $\eps=1/j$ with $j\in\Z_{\geq 1}$, we define a non-decreasing function $\xi(x)\to\infty$ by setting $\xi(x):=j$ whenever $x\in [x_{\frac{1}{j},\frac{1}{j}},\, x_{\frac{1}{j+1},\frac{1}{j+1}})$. Then
 \begin{equation}\label{eps0}
  \frac{\phi(y)}{\phi(x)}\to 1 \text{ uniformly for } y\in\bigg[\frac{x}{\xi(x)},x\bigg].
 \end{equation}
 
 We split the weighted representation sum as $S_1+S_2$, where
 \[ S_1 := \sum_{\substack{x_1,\ldots,x_h\in B \\ x_1+\cdots+x_h = n \\ \exists j \,|\, x_{j}< n/\xi(n)}} \frac{g(x_1)}{x_1^{\beta}\vartheta(x_1)}\cdots\frac{g(x_h)}{x_h^{\beta}\vartheta(x_h)}, \qquad
    S_2 := \sum_{\substack{x_1,\ldots,x_h\in B \\ x_1+\cdots+x_h = n \\ \forall j,\, x_{j}\geq n/\xi(n)}} \frac{g(x_1)}{x_1^{\beta}\vartheta(x_1)}\cdots\frac{g(x_h)}{x_h^{\beta}\vartheta(x_h)}. \]
 
 \medskip\noindent
 $\textbf{(1)}~\text{\underline{The range with a small variable.}}$
 We show that $S_1=o(G(n))$. By symmetry, it suffices to bound the contribution of solutions with $x_1< n/\xi(n)$. By Corollary \ref{cor1}, for any fixed $X\geq 1$ the contribution to $S_1$ from tuples with $\min_j x_j<X$ is $O(n^{-\eta}G(n))$ for some $\eta>0$. Hence, choosing $X = X_{\delta}$ as in the Potter bound below, we may restrict to tuples with $x_j\geq X_{\delta}$ for all $j$ at the cost of an error $o(G(n))$.
 
 Fix the $\delta > 0$ appearing in the hypothesis. By Potter bounds (BGT \cite[Theorem 1.5.6 (i)]{bingham89}), there exists $X_{\delta}\geq 1$ such that $\phi(y) \leq 2 (x/y)^{\delta} \phi(x)$ for $x\geq y\geq X_{\delta}$. Hence
 \begin{align*}
  \sum_{\substack{x_1,\ldots,x_h\in B \\ x_1+\cdots+x_h = n \\ x_{1}< n/\xi(n)}} \, &\frac{g(x_1)}{x_1^{\beta}\vartheta(x_1)}\cdots \frac{g(x_h)}{x_h^{\beta}\vartheta(x_h)} \\
  &\ll \sum_{\substack{x_1,\ldots,x_h\in B \\ x_1+\cdots+x_h = n \\ x_{1}< n/\xi(n)}} \frac{x_1^{\omega}}{x_1^{\beta}\vartheta(x_1)}\cdots\frac{x_h^{\omega}}{x_h^{\beta}\vartheta(x_h)} \bigg(\frac{n/\xi(n)}{x_1} \frac{n}{x_2}\cdots \frac{n}{x_h} \bigg)^{\delta} \phi(n/\xi(n))\, \phi(n)^{h-1} \\
  &\ll \frac{n^{h\delta} \phi(n)^h }{\xi(n)^{\delta}} \sum_{\substack{x_1,\ldots,x_h\in B \\ x_1+\cdots+x_h = n}} \frac{x_1^{\omega - \delta}}{x_1^{\beta}\vartheta(x_1)}\cdots\frac{x_h^{\omega - \delta}}{x_h^{\beta}\vartheta(x_h)},
 \end{align*}
 since $\phi(n/\xi(n)) \sim \phi(n)$. By the hypothesis, the sum above is bounded by $O(n^{h(\omega-\delta)-1})$, thus
 \[ \sum_{\substack{x_1,\ldots,x_h\in B \\ x_1+\cdots+x_h = n \\ x_{1}< n/\xi(n)}} \frac{g(x_1)}{x_1^{\beta}\vartheta(x_1)}\cdots \frac{g(x_h)}{x_h^{\beta}\vartheta(x_h)} \ll \frac{n^{h\omega-1} \phi(n)^h}{\xi(n)^{\delta}}  = \frac{G(n)}{\xi(n)^{\delta}}. \]
 Since $\delta > 0$ and $\xi(n) \to \infty$, it follows that the sum, and hence $S_1$, is $o(G(n))$.

 \medskip\noindent
 $\textbf{(2)}~\text{\underline{The main range.}}$
 By \eqref{eps0},
 \begin{align}
  S_2 &= \phi(n)^h \sum_{\substack{x_1,\ldots,x_h\in B \\ x_1+\cdots+x_h = n \\ \forall j,\, x_{j}\geq n/\xi(n)}} \frac{x_1^{\omega}}{x_1^{\beta}\vartheta(x_1)}\cdots\frac{x_h^{\omega}}{x_h^{\beta}\vartheta(x_h)} \bigg(\frac{\phi(x_1)}{\phi(n)}\cdots \frac{\phi(x_h)}{\phi(n)}\bigg) \nonumber \\
  &\sim \phi(n)^h \sum_{\substack{x_1,\ldots,x_h\in B \\ x_1+\cdots+x_h = n \\ \forall j,\, x_{j}\geq n/\xi(n)}} \frac{x_1^{\omega}}{x_1^{\beta}\vartheta(x_1)}\cdots\frac{x_h^{\omega}}{x_h^{\beta}\vartheta(x_h)}. \label{eqrwst}
 \end{align}
 Moreover, the contribution to the power-weighted sum from tuples with $\min_j x_j< n/\xi(n)$ is $o(n^{h\omega-1})$ by the argument of \textbf{(1)} applied with $\phi\equiv 1$. Hence the restriction $x_j\geq n/\xi(n)$ may be removed from \eqref{eqrwst} at the cost of an error $o(n^{h\omega - 1} \phi(n)^h) = o(G(n))$. Since we suppose \eqref{mainest} holds with $F(x) = x^{\kappa}$, it follows from \eqref{eqrwst} that
 \[ S_2 \sim \mathfrak{S}(n)\,G(n) + o(G(n)). \]
 But $\mathfrak{S}(n)\asymp 1$ and thus $o(G(n))=o(\mathfrak{S}(n)\,G(n))$ for $n\in\mathscr{S}$. Together with $S_1=o(G(n))$, this proves that $S_1+S_2 \sim \mathfrak{S}(n)\,G(n)$ for $n\in\mathscr{S}$. Parts (i) and (ii) then follow immediately from Theorem \ref{MT2}. \hfill$\square$

\section{Waring subbases: Theorem \ref{MT1}}\label{waring1}
 In this section we prove Theorem \ref{MT1}. The central analytic input is a weighted asymptotic formula for Waring's problem (Theorem \ref{wolem}), a refinement of \cite{woo04} in which the admissible range for the number of variables is reduced to $h\geq k^2-k+O(\sqrt{k})$. The circle method argument is the same as in \cite{woo04}; our only modification is that this admissible range of $h$ is sharpened by appealing to the mean value estimate in Lemma \ref{war-hua} in the minor arc analysis. The same lemma also supplies the Hua-type hypothesis \eqref{hua-type} for $B=\N^k$ needed in the application of the general subbasis principle.
 
 \begin{lem}[Mean value estimate]\label{war-hua}
  Let $k\geq 1$, and define
  \begin{equation}\label{h0k}
   H_0 = H_0(k) := \begin{cases}
                    2^{k-1}, & 1\leq k\leq 4; \\
                    \frac{1}{2}k(k-1) + \lfloor \sqrt{2k+2}\rfloor, & k\geq 5.
                   \end{cases}
  \end{equation}
  Then, for every integer $\ell \geq H_0$,
  \[ \int_{0}^{1} \bigg| \sum_{n\leq x} e(\alpha n^{k})\bigg|^{2\ell} \,\mathrm{d}\alpha \ll x^{2\ell - k}(\log x)^M \]
  for some constant $M = M(k)>0$. When $k\geq 5$, the factor $(\log x)^M$ can be dropped.
 \end{lem}
 \begin{proof}
  For $1\leq k\leq 4$, this is Hua's inequality (cf. \cite[Theorem 4]{hua65}). Assume now that $k\geq 5$. By \cite[Corollary 14.8 and Eq. (14.26)]{woo19}, since $k-\lfloor \sqrt{2k+2}\rfloor \geq 2$ for $k\geq 5$, one has
  \begin{equation}\label{eaxkax}
   \int_{0}^{1} \int_{0}^{1} \bigg|\sum_{n\leq x} e(\alpha_1 n^k + \alpha_2 n) \bigg|^s \,\mathrm{d}\alpha_1 \,\mathrm{d}\alpha_2 \ll x^{s-k-1}
  \end{equation}
  for every real $s \geq k(k-1) + 2\lfloor \sqrt{2k+2}\rfloor - 1 = 2H_0-1$. In particular, we may take $s=2\ell$ for any integer $\ell\geq H_0$. For such $\ell$, we have
  \[ \int_0^1 \bigg|\sum_{n\leq x} e(\alpha n^k)\bigg|^{2\ell}\,\mathrm{d}\alpha = \sum_{|m|\leq \ell x}\int_0^1\int_0^1 \bigg|\sum_{n\leq x} e(\alpha_1 n^k+\alpha_2 n)\bigg|^{2\ell} e(-m\alpha_2)\,\mathrm{d}\alpha_1\,\mathrm{d}\alpha_2. \]
  Indeed, expanding and applying orthogonality in $\alpha_1,\alpha_2$ shows that, for each $m\in\Z$,
  \[ \int_0^1\int_0^1 \bigg|\sum_{n\leq x} e(\alpha_1 n^k+\alpha_2 n)\bigg|^{2\ell} e(-m\alpha_2)\,\mathrm{d}\alpha_1\,\mathrm{d}\alpha_2
  = \sum_{\substack{n_1,\ldots,n_\ell\leq x \\ r_1,\ldots,r_\ell\leq x}} \mathbbm{1}_{\sum n_i^k=\sum r_i^k}\,\mathbbm{1}_{\sum n_i-\sum r_i=m}, \]
  and since $\sum n_i-\sum r_i\in[-\ell x,\ell x]$, summing over $|m|\leq \ell x$ yields the claimed identity. Taking absolute values gives
  \[ \int_0^1 \bigg|\sum_{n\leq x} e(\alpha n^k)\bigg|^{2\ell}\,\mathrm{d}\alpha \leq (2\ell x+1)\int_0^1\int_0^1 \bigg|\sum_{n\leq x} e(\alpha_1 n^k+\alpha_2 n)\bigg|^{2\ell}\,\mathrm{d}\alpha_1\,\mathrm{d}\alpha_2, \]
  and the claim follows from \eqref{eaxkax} with $s=2\ell$.
 \end{proof}

 \begin{xrem}
  From Lemma \ref{war-hua}, by Parseval's identity, we have
  \[ \sum_{n\leq x} r_{\N^{k},\ell}(n)^{2} \leq \int_{0}^{1} \bigg| \sum_{n\leq x^{1/k}} e(\alpha n^{k})\bigg|^{2\ell} \,\mathrm{d}\alpha \ll x^{2\ell/k - 1 + o(1)}, \]
  which is in the shape of \eqref{hua-type}.
 \end{xrem}
 
 It remains to establish the weighted asymptotic \eqref{mainest} for $\N^k$ in the power case $F(x)=x^\kappa$. For this we prove the following theorem. 
 
 \begin{thm}\label{wolem}
  Let $H_0 = H_0(k)$ be as in \eqref{h0k}, and let $h\geq 2H_{0} + 1$. Let $\delta>0$ be any real number with 
  \[ \delta < \frac{h-2H_0}{2 h(h-1) H_0}. \]
  Then, for any $\omega \geq 1/h - \delta$,
  \[ \sum_{\substack{x_1,\ldots,x_h\in \N^k \\ x_1+\cdots+x_h = N}} (x_1\cdots x_h)^{\omega - \frac{1}{k}} = \mathfrak{S}_{k,h}(N)\,\frac{1}{k^h}\frac{\Gamma(\omega)^h}{\Gamma(h\omega)} \, N^{h\omega - 1} + O(N^{h\omega -1 -\tau}) \]
  for some $\tau = \tau(k,h,\delta) >0$,\footnote{An explicit value of $\tau$ can be obtained from Propositions \ref{majorwar} and \ref{minorwar}, although we did not try to optimize it.} where the singular series $\mathfrak{S}_{k,h}(N)$ is defined as in \eqref{singser}.
 \end{thm}
 
 Since $|\N^k\cap [0,x]| = x^{1/k} + O(1)$, clearly $\N^k$ satisfies \eqref{oregvar}, with $\beta=1/k$. With Lemma \ref{war-hua}, $\N^k$ satisfies \eqref{hua-type}. Thus, assuming Theorem \ref{wolem}, Theorem \ref{MT1} follows from Corollary \ref{cor-tech2} applied to every $0\leq \kappa \leq h/k-1$.

\subsection{Setup for Theorem \ref{wolem}}
 We now prove Theorem \ref{wolem}. Fix $k\geq 2$, let $h\geq 2H_0 + 1$ be an integer, let $\omega\geq 1/h - \delta$ be a real number, and let $N$ be a large integer. Define
 \begin{equation}\label{Twaring}
  T(\alpha;x) = \sum_{n\leq x} n^{\omega}\, \frac{\mathbbm{1}_{\N^k}(n)}{n^{1/k}} \, e(\alpha n), \qquad T^{\sharp}(\alpha;x) = \sum_{x/h\leq n\leq x} n^{\omega}\, \frac{\mathbbm{1}_{\N^k}(n)}{n^{1/k}} \, e(\alpha n),
 \end{equation}
 so that, by orthogonality,
 \[ \sum_{\substack{x_1,\ldots,x_h \in \N^k \\ x_1+\cdots+x_h = N}} (x_1\cdots x_h)^{\omega-\frac{1}{k}} = \int_{0}^{1} T(\alpha;N)^{h}\, e(-N\alpha)\,\mathrm{d}\alpha. \]
 If $x_1+\cdots+x_h = N$, then $x_i\geq N/h$ for at least one $x_i$. Hence,
 \[ \int_{0}^{1} (T(\alpha;N)-T^{\sharp}(\alpha;N))^h\, e(-N\alpha)\,\mathrm{d}\alpha = 0. \]
 Therefore, 
 \begin{align}
  \sum_{\substack{x_1,\ldots,x_h \in \N^k \\ x_1+\cdots+x_h = N}} (x_1\cdots x_h)^{\omega-\frac{1}{k}}
  &= \int_{0}^{1} \big(T(\alpha;N)^h - (T(\alpha;N)-T^{\sharp}(\alpha;N))^h\big)\, e(-N\alpha)\,\mathrm{d}\alpha \nonumber \\
  &= \sum_{j=1}^{h} (-1)^{j+1}\binom{h}{j} \int_{0}^{1} T^{\sharp}(\alpha;N)^{j}\,T(\alpha;N)^{h-j}\, e(-N\alpha)\,\mathrm{d}\alpha. \label{sothat1}
 \end{align}
 
  As is standard in the circle method, we divide $[0,1]$ into major arcs $\mathfrak{M}$ and minor arcs $\mathfrak{m}$, where the major arcs $\mathfrak{M}=\mathfrak{M}_N$ are defined as
 \[ \mathfrak{M} := \bigsqcup_{q\leq L}\bigsqcup_{\substack{a=1 \\ (a,q)=1}}^{q} \mathfrak{M}(q,a), \quad \mathfrak{M}(q,a) := \bigg\{\alpha \in [0,1] ~\bigg|~ \bigg\|\alpha -\frac{a}{q}\bigg\| \leq \frac{L}{N} \bigg\}  \]
 where $\|\cdot\|$ denotes distance to the nearest integer, and for a parameter
 \begin{equation}
  L = L(N) := N^{\nu}, \qquad \nu = \nu(\omega) := \min\Big\{\frac{1}{7k},\, \frac{\omega}{7}\Big\}. \label{nuchoice}
 \end{equation}
 The minor arcs are defined as the complement, $\mathfrak{m} = \mathfrak{m}_N := [0,1]\setminus \mathfrak{M}$. Splitting the integral in \eqref{sothat1} as $\int_{0}^{1} = \int_{\mathfrak{M}} + \int_{\mathfrak{m}}$, we anticipate that the contribution from the minor arcs is negligible. By \eqref{sothat1}, Theorem \ref{wolem} will follow from a major arc evaluation (Proposition \ref{majorwar}) and a minor arc bound saving a power of $N$ (Proposition \ref{minorwar}).

\subsection{Major arcs} 
 The goal of this subsection is to prove the following:
 
 \begin{prop}[Major arcs]\label{majorwar}
  We have
  \begin{align*}
   \sum_{j=1}^{h} (-1)^{j+1}\binom{h}{j} \int_{\mathfrak{M}} T^{\sharp}(\alpha;N)^{j}\, T(\alpha;\,N)^{h-j}\,&e(-N\alpha)\,\mathrm{d}\alpha \\
   &= \mathfrak{S}_{k,h}(N)\,\frac{1}{k^h}\frac{\Gamma(\omega)^h}{\Gamma(h\omega)} \, N^{h\omega-1} + O(N^{h\omega-1-\eta}),
  \end{align*}
  where $\mathfrak{S}_{k,h}(N)$ is as in \eqref{singser}, and $\eta := \nu \min\{1/k, (h-1)\omega\}$.
 \end{prop}

 In order to calculate the integral in Proposition \ref{majorwar}, we will approximate $T$ and $T^{\sharp}$ in terms of, respectively,
 \begin{equation}
  U(\vartheta; x) := \sum_{n\leq x} n^{\omega -1} e(n\vartheta), \qquad U^{\sharp}(\vartheta; x) := \sum_{x/h\leq n\leq x} n^{\omega -1} e(n\vartheta). \label{defU}
 \end{equation}
 
 \begin{lem}\label{war-est2}
  Let $q\leq L$, and $1\leq a\leq q$ with $(a,q)=1$. For $\alpha \in \mathfrak{M}(q,a)$, write $\vartheta := \alpha - \frac{a}{q}$. Then
  \[ T(\alpha; N) = \frac{S(q,a)}{kq} U(\vartheta; N) + O(N^{\omega-4\nu}),\quad T^{\sharp}(\alpha; N) = \frac{S(q,a)}{kq} U^{\sharp}(\vartheta; N) + O(N^{\omega- 4\nu}) \]
  uniformly for $q\leq L$, where $S(q,a) = \sum_{r=1}^{q} e(ar^{k}/q)$.
 \end{lem}
 \begin{proof}
  We prove the lemma for $T$; the argument for $T^{\sharp}$ is analogous.
  
  Let $P_k(q,r) := \#\{1\leq s\leq q ~|~ s^k \equiv r\pmod{q}\}$. For every $1\leq r\leq q$, we have
  \begin{align*}
   \sum_{\substack{n\leq x\\ n\equiv r(q)}} n^{\omega}\,\frac{\mathbbm{1}_{\N^k}(n)}{n^{1/k}} &= \sum_{\substack{1\leq s\leq q \\ s^k \equiv r (q)}} \sum_{m\leq \frac{x^{1/k} - s}{q}} (qm+s)^{k\omega-1} \\
   &= q^{k\omega-1} \sum_{\substack{1\leq s\leq q \\ s^k \equiv r (q)}} \sum_{m\leq \frac{x^{1/k} - s}{q}} \bigg(m+\frac{s}{q}\bigg)^{k\omega-1} \\
   &= q^{k\omega-1} P_k(q,r) \bigg(\frac{(x^{1/k}/q)^{k\omega}}{k\omega} + O\big(1 + (x^{1/k}/q)^{k\omega-1}\big) \bigg) \\
   &= \frac{P_k(q,r)}{kq} \frac{x^{\omega}}{\omega} + O(q^{k\omega} + q x^{\omega- \frac{1}{k}})
  \end{align*}
  Hence,
  \begin{align}
   T\bigg(\frac{a}{q}; x\bigg) = \sum_{n\leq x} n^{\omega}\,\frac{\mathbbm{1}_{\N^k}(n)}{n^{1/k}}\, e\bigg(\frac{na}{q}\bigg) &= \sum_{r=1}^{q} \Bigg(\sum_{\substack{n\leq x\\ n\equiv r(q)}} n^{\omega}\,\frac{\mathbbm{1}_{\N^k}(n)}{n^{1/k}}\Bigg) e\bigg(\frac{ra}{q}\bigg) \nonumber \\
   &= \Bigg(\frac{1}{kq}\sum_{r=1}^{q} P_k(q,r)\,e\bigg(\frac{ra}{q} \bigg) \Bigg) \frac{x^{\omega}}{\omega} + O(q^{k\omega+1} + q^2x^{\omega- \frac{1}{k}}) \nonumber \\
   &= \frac{S(q,a)}{kq} \frac{x^{\omega}}{\omega} + O(q^{k\omega+1} + q^2 x^{\omega- \frac{1}{k}}). \label{gxab1}
  \end{align}  
  (Note that $T^{\sharp}(\frac{a}{q};x) = T(\frac{a}{q};x) - T(\frac{a}{q}; \frac{x}{h} -1)$.) Write
  \begin{align*}
   T(\alpha;N) - \frac{S(q,a)}{kq} U(\vartheta;N) = \sum_{n\leq N} \underbrace{\Bigg(n^{\omega}\, \frac{\mathbbm{1}_{\N^k}(n)}{n^{1/k}}\, e\bigg(\frac{na}{q}\bigg) - \frac{S(q,a)}{kq} n^{\omega-1} \Bigg)}_{=:\, D(n)} e(n\vartheta)
  \end{align*}
  By \eqref{gxab1}, and the fact that $\sum_{n\leq t} n^{\omega-1} = t^{\omega}/\omega + O(1+t^{\omega-1})$, for $t\leq N$ we have 
  \[ \sum_{n\leq t} D(n) = O(L^{k\omega+1} + L^2 t^{\omega-\frac{1}{k}} + t^{\omega-1} + 1) = O(N^{\omega-5\nu}) \]
  uniformly for $q\leq L = N^{\nu}$, by our choice of $\nu = \nu(\omega)$ in \eqref{nuchoice}. By partial summation we conclude, since $|\vartheta| \leq L/N = N^{-1+\nu}$,
  \begin{align*}
   T(\alpha;N) - \frac{S(q,a)}{kq} U(\vartheta;N) &= \bigg(\sum_{n\leq N} D(n)\bigg) e(N\vartheta) - 2\pi i \vartheta \int_{1}^{N} \bigg(\sum_{n\leq t} D(n)\bigg) e(t\vartheta)\,\mathrm{d}t \\
   &\ll \bigg|\sum_{n\leq N} D(n)\bigg| + N|\vartheta| \max_{t\leq N} \bigg(\sum_{n\leq t} D(n)\bigg) \\
   &= O(N^{\omega-4\nu}). \qedhere
  \end{align*}
 \end{proof}
 
 Next, we prove a lemma ensures that the singular series $\mathfrak{S}_{k,h}(N)$ converges absolutely and is bounded away from $0$ and $\infty$ for $h\geq 2H_0+1$.
 
 \begin{lem}[Singular series]\label{SingSerlem}
  Let $S(q,a)$ be as in Lemma \ref{war-est2}, and let
  \[ \mathfrak{S}_{k,h}(N,Q) := \sum_{q\leq Q} \sum_{\substack{a=1\\ (a,q)=1}}^{q} \frac{S(q,a)^{h}}{q^h} e\bigg(-\frac{Na}{q}\bigg), \quad \mathfrak{S}_{k,h}(N) = \lim_{Q\to\infty} \mathfrak{S}_{k,h}(N,Q). \]
  Then, for $h\geq 2H_0+1$ we have 
  \[ \mathfrak{S}_{k,h}(N,Q) = (1 + O(Q^{-1/k}))\, \mathfrak{S}_{k,h}(N) \quad\text{as}\quad Q\to\infty. \]
  Moreover, $\mathfrak{S}_{k,h}(N) \asymp 1$.
 \end{lem}
 \begin{proof}
  Let $q\geq 1$, $1\leq a\leq q$ with $(a,q)=1$. By \cite[Theorem 4.2]{vaughan97}, we have $S(q,a) \ll q^{1 - 1/k}$. Thus, since $H_0\geq k$, and hence $h\geq 2k+1$, we have
  \begin{align*}
   |\mathfrak{S}_{k,h}(N) - \mathfrak{S}_{k,h}(N,Q)| &\leq \sum_{q>Q} \sum_{\substack{a=1\\ (a,q)=1}}^{q} \bigg|\frac{S(q,a)^{h}}{q^{h}} \bigg| \\
   &\ll \sum_{q>Q} (q^{-1/k})^h \, q \\
   &\ll \sum_{q>Q} q^{-1 -1/k} \asymp Q^{-1/k}.
  \end{align*}
  In particular, this shows $\mathfrak{S}_{k,h}(N) \ll 1$.
  
  By \cite[Theorem 4.6]{vaughan97}, $\mathfrak{S}_{k,\ell}(N) \gg 1$ for $\ell \geq 5$ when $k=2$, for $\ell\geq 4k$ when $k>2$ is a power of $2$, and for $\ell\geq \frac{3}{2}k$ when $k\geq 2$ otherwise. In all cases, this holds true for $\ell \geq 2H_0+1$.
 \end{proof}
 
 To work with $U$, $U^{\sharp}$, we need the following bounds.
 
 \begin{lem}\label{est4}
  For $-\frac{1}{2}\leq \vartheta \leq \frac{1}{2}$, $\vartheta\neq 0$, we have
  \[ |U(\vartheta;N)| \ll N^{\omega-1}|\vartheta|^{-1} + |\vartheta|^{-\omega}, \qquad |U^{\sharp}(\vartheta;N)| \ll N^{\omega-1}|\vartheta|^{-1}, \]
 \end{lem}
 \begin{proof}
  We have $|U(\vartheta;N)| \leq U(0;N) = N^{\omega}/\omega + O(1+N^{\omega-1})$. On the other hand, let $M := \min\{N,\lfloor |\vartheta|^{-1}\rfloor\}$, so that 
  \begin{equation}
   |U(\vartheta;N)| \leq \sum_{n\leq M} n^{\omega-1} + \bigg|  \sum_{n=M+1}^{N} n^{\omega-1} e(n\vartheta) \bigg| \ll M^{\omega} + \bigg| \sum_{n=M+1}^{N} n^{\omega-1} e(n\vartheta) \bigg|. \label{unM}
  \end{equation}
  Let $E(x):= \sum_{n\leq x} e(n\vartheta)$. Since $|1-e(\vartheta)| = |e(-\vartheta/2) - e(\vartheta/2)| = 2|\sin \pi\vartheta| \geq 2|\vartheta|$ for $\vartheta$ in the range considered, we have
  \begin{align*}
   |E(x)| = \bigg|\sum_{n\leq x} e(n\vartheta)\bigg| = \bigg|\frac{1-e((\lfloor x \rfloor+1) \vartheta)}{1-e(\vartheta)} \bigg| \leq |\vartheta|^{-1}.
  \end{align*}
  By using that $E(x) \ll |\vartheta|^{-1}$, we obtain from partial summation that
  \begin{align*}
   \sum_{n=M+1}^{N} n^{\omega-1} e(n\vartheta) &= N^{\omega-1} E(N) - M^{\omega-1} E(M) - (\omega-1)\int_{M}^{N} E(t) t^{\omega-2}\, \mathrm{d}t \\
   &\ll (N^{\omega-1} + M^{\omega-1}) \,|\vartheta|^{-1} \ll N^{\omega-1}|\vartheta|^{-1} +  M^{\omega},
  \end{align*}
  so the result follows from \eqref{unM}.
  
  Similarly, for $U^{\sharp}$, we have
  \begin{align*}
   U^{\sharp}(\vartheta; N) &= \sum_{N/h \leq n \leq N} n^{\omega-1} e(n\vartheta) \\
   &= N^{\omega-1} E(N) - \bigg(\frac{N}{h}\bigg)^{\omega-1} E(N/h) - (\omega-1)\int_{N/h}^{N} E(t) t^{\omega-2}\, \mathrm{d}t \\
   &\ll N^{\omega-1} |\vartheta|^{-1}. \qedhere
  \end{align*}
 \end{proof}
 
 With this, we can now substitute $T$, $T^{\sharp}$ by $U$, $U^{\sharp}$ in the integral of Proposition \ref{majorwar}.
 
 \begin{lem}\label{war-quasiprop}
  For $1\leq j\leq h$, we have
  \begin{align*}
   \int_{\mathfrak{M}} T^{\sharp}&(\alpha;N)^{j}\,T(\alpha;N)^{h-j}\,e(-N\alpha)\,\mathrm{d}\alpha \\
   &= (1+O(N^{-\nu/k}))\, \mathfrak{S}_{k,h}(N) \frac{1}{k^h}\int_{0}^{1} U^{\sharp}(\vartheta;N)^{j}\, U(\vartheta; N)^{h-j}\,e(-N\vartheta)\,\mathrm{d}\vartheta + O(N^{h\omega-1 - \mu}),
  \end{align*}
  where $\mu := \nu\min\{1,(h-1)\omega\}$.
 \end{lem}
 \begin{proof}
  Since $|U(\vartheta;N)| \leq U(0;N) = N^{\omega}/\omega + O(1 + N^{\omega-1})$, Lemma \ref{war-est2} implies that
  \[ T(\alpha; N)^{h} = \frac{S(q,a)^{h}}{k^h q^{h}}U(\vartheta; N)^h + O(N^{h\omega - 4\nu}), \]
  and similarly for $T^{\sharp}$, $U^{\sharp}$. Using that $\int_{\mathfrak{M}(q,a)}\mathrm{d}\alpha = 2L/N$, this leads to the estimate
  \begin{align}
   \int_{\mathfrak{M}}\, &T^{\sharp}(\alpha;N)^{j}\,T(\alpha;N)^{h-j}\, e(-N\alpha)\,\mathrm{d}\alpha \nonumber \\
   &= \sum_{q\leq L}\sum_{\substack{a=1\\ (a,q)=1}}^{q} \int_{\mathfrak{M}(q,a)} T^{\sharp}(\alpha;N)^{j}\,T(\alpha;N)^{h-j}\, e(-N\alpha)\,\mathrm{d}\alpha \nonumber \\
   &= \sum_{q\leq L}\sum_{\substack{a=1\\ (a,q)=1}}^{q} \frac{S(q,a)^h}{k^h q^{h}} \int_{\mathfrak{M}(q,a)} U^{\sharp}\bigg(\alpha - \frac{a}{q}; N\bigg)^j\, U\bigg(\alpha - \frac{a}{q}; N\bigg)^{h-j} e(-N\alpha)\,\mathrm{d}\alpha \nonumber\\
   &\hspace{+28em}+ O(L^3 N^{h\omega - 1 -4\nu}) \nonumber \\
   &= \underbrace{\sum_{q\leq L} \sum_{\substack{a=1\\ (a,q)=1}}^{q} \frac{S(q,a)^{h}}{q^{h}} e\bigg(-\frac{Na}{q}\bigg)}_{=\,\mathfrak{S}_{k,h}(N,L)}\, \frac{1}{k^h}\int_{-L/N}^{L/N} U^{\sharp}(\vartheta; N)^j\, U(\vartheta; N)^{h-j}\,e(-N\vartheta) \,\mathrm{d}\vartheta \nonumber \\
   &\hspace{+29em}+ O(N^{h\omega-1-\nu}) \nonumber \\
   &= (1+O(N^{-\nu/k}))\, \mathfrak{S}_{k,h}(N) \frac{1}{k^h}\int_{-L/N}^{L/N} U^{\sharp}(\vartheta; N)^{j}\, U(\vartheta; N)^{h-j}\,e(-N\vartheta) \,\mathrm{d}\vartheta + O(N^{h\omega-1-\nu}), \nonumber
  \end{align}
  where the last line follows from Lemma \ref{SingSerlem}. For $|\vartheta| \geq N^{-1}$, Lemma \ref{est4} gives $|U(\vartheta;N)| \ll N^{\omega-1}|\vartheta|^{-1} + |\vartheta|^{-\omega} \ll N^{\omega-\min\{1,\omega\}}|\vartheta|^{-\min\{1,\omega\}}$. Hence,
  \begin{align}
   \bigg|\int_{[-\frac{1}{2},-\frac{L}{N}]\cup [\frac{L}{N},\frac{1}{2}]} U^{\sharp}(\vartheta;N)^{j} \,U(\vartheta;N&)^{h-j}\, e(-N\vartheta)\,\mathrm{d}\vartheta\bigg| \nonumber \\
   &\leq \int_{[-\frac{1}{2},-\frac{L}{N}]\cup [\frac{L}{N},\frac{1}{2}]} |U^{\sharp}(\vartheta;N)|^{j}\,|U(\vartheta;N)|^{h-j}\, \mathrm{d}\vartheta \nonumber \\
   &\ll N^{j(\omega-1)+(h-j)(\omega-\min\{1,\omega\})} \int_{L/N}^{1/2} \vartheta^{-j-(h-j)\min\{1,\omega\}} \,\mathrm{d}\vartheta \nonumber \\
   &\ll \frac{N^{h\omega-1}}{L^{j+(h-j)\min\{1,\omega\}-1}} \ll \frac{N^{h\omega-1}}{L^{(h-1)\min\{1,\omega\}}}. \label{estUshU} 
  \end{align}
  Since $\mathfrak{S}_{k,h}(N)\asymp 1$ and $L=N^{\nu}$, this finishes the proof.
 \end{proof}
 
 Lemma \ref{war-quasiprop} reduces Proposition \ref{majorwar} to the calculation of the integral in $U$, $U^{\sharp}$ that appears in its statement, which is done in the following lemma: 
 
 \begin{lem}[Singular integral]\label{sgint}
  We have
  \[ \sum_{j=1}^{h} (-1)^{j+1}\binom{h}{j} \int_{0}^{1} U^{\sharp}(\vartheta;N)^{j}\, U(\vartheta; N)^{h-j}\,e(-N\vartheta)\,\mathrm{d}\vartheta = (1+ O(N^{-\min\{1,\omega\}})) \frac{\Gamma(\omega)^{h}}{\Gamma(h\omega)} N^{h\omega - 1}. \]
 \end{lem}
 \begin{proof}
  By the same argument used to obtain \eqref{sothat1}, we deduce from \eqref{defU} that
  \begin{align}
   \sum_{j=1}^{h} (-1)^{j+1}\binom{h}{j} \int_{0}^{1} U^{\sharp}(\vartheta;N)^{j}\, U(\vartheta; N)^{h-j}\,e(-N\vartheta)\,\mathrm{d}\vartheta &= \int_{0}^{1} U(\vartheta;N)^h\, e(-N\vartheta)\,\mathrm{d}\vartheta \nonumber \\
   &= \sum_{\substack{x_1,\ldots,x_h \in \N \\ x_1+\cdots+x_h = N}} (x_1\cdots x_h)^{\omega - 1}. \nonumber
  \end{align}
  The evaluation of this sum for $\omega>0$ can be found, for instance, in \cite[Lemma 3.5]{taf25}.
 \end{proof}

 \begin{proof}[Proof of Proposition \ref{majorwar}]
  By Lemmas \ref{war-quasiprop} and \ref{sgint}, with $\mu = \nu\min\{1,(h-1)\omega\}$, we have
  \begin{align*}
   \sum_{j=1}^{h}\, &(-1)^{j+1} \binom{h}{j} \int_{\mathfrak{M}} T^{\sharp}(\alpha;N)^{j}\, T(\alpha;N)^{h-j}\,e(-N\alpha)\,\mathrm{d}\alpha \\
   &= (1+O(N^{-\nu/k}))\,\mathfrak{S}_{k,h}(N) \frac{1}{k^h}\sum_{j=1}^{h} (-1)^{j+1} \binom{h}{j}\int_{0}^{1} U^{\sharp}(\vartheta;N)^{j}\, U(\vartheta; N)^{h-j}\,e(-N\vartheta)\,\mathrm{d}\vartheta \\
   &\hspace{+29.5em}+ O(N^{h\omega-1-\mu}) \\
   &= (1+O(N^{-\nu/k}))\,(1+ O(N^{-\min\{1,\omega\}}))\, \mathfrak{S}_{k,h}(N) \frac{1}{k^h}\frac{\Gamma(\omega)^{h}}{\Gamma(h\omega)} N^{h\omega -1} + O(N^{h\omega-1-\mu}),
  \end{align*}
  Since $\mathfrak{S}_{k,h}(N)\ll 1$ by Lemma \ref{SingSerlem}, rearranging the error terms concludes the proof.
 \end{proof}

\subsection{Minor arcs}\label{minarcwar}
 We now turn to the minor arcs $\mathfrak{m}$, as defined in the setup for Theorem \ref{wolem}. Together with \eqref{sothat1} and Proposition \ref{majorwar}, the next result directly implies Theorem \ref{wolem}.
 
 \begin{prop}[Minor arcs]\label{minorwar}
  We have
  \[ \sum_{j=1}^{h} \bigg|\int_{\mathfrak{m}} T^{\sharp}(\alpha; N)^{j}\,T(\alpha;N)^{h-j}\, e(-N\alpha) \,\mathrm{d}\alpha\bigg| \ll N^{h\omega - 1 - \eta'}, \]
  where $\eta' := 2^{-k}\nu \min\{\frac{1}{h},\,\frac{h-2H_0}{h} -2H_0(h-1)\delta\}$.
 \end{prop}

 Define the function
 \begin{equation*}
  g(\alpha;x) := \sum_{n\leq x} \mathbbm{1}_{\N^k}(n)\, e(n\alpha). 
 \end{equation*}
 We bound the value of $g(\alpha;x)$ for $\alpha\in\mathfrak{m}$ using Weyl's inequality \cite[Theorem 4.3]{nathanson96}. 

 \begin{lem}[Weyl's inequality]\label{weylineq}
  Let $\alpha\in[0,1]$ be such that $\|\alpha - \frac{a}{q}\| \leq q^{-2}$, where $a$, $q$ are integers with $1\leq q\leq x$ and $(a,q)=1$. Then,
  \[ g(\alpha;x) \ll (q^{-1} + x^{-1/k} + q x^{-1})^{2^{1-k}} x^{1/k + o(1)} \]
 \end{lem}
 
 Modern versions of this inequality follow from current Vinogradov mean value estimates, with the exponent $2^{1-k}$ replaced by $(k(k-1))^{-1}$ \cite[Theorem 5]{bou17}. This refinement, however, is not needed for our purposes.

 \begin{cor}\label{war-condv}
  $\displaystyle \sup_{\alpha\in \mathfrak{m}} |g(\alpha;N)| \ll N^{\frac{1}{k} - 2^{1-k}\nu + o(1)}$.
 \end{cor}
 \begin{proof}
  By Dirichlet's theorem \cite[Theorem 4.1]{nathanson96}, for any $\alpha\in [0,1]$ there exists $1\leq q\leq N/L$ and $1\leq a\leq q$ with $(a,q)=1$ such that
  \[ \bigg\|\alpha - \frac{a}{q} \bigg\| \leq \frac{L}{qN} \leq \min\bigg(\frac{L}{N},\frac{1}{q^2}\bigg). \]
  Since $\alpha\in \mathfrak{m}$, we must have $q>L$ (otherwise, $\alpha\in \mathfrak{M}_{N}(q,a)$ by definition), so $L< q\leq N/L$. Hence, by Lemma \ref{weylineq},
  \begin{align*}
   g(\alpha;N) &\ll (q^{-1} + N^{-1/k} + qN^{-1})^{2^{1-k}} N^{1/k + o(1)} \\
   &\ll (L^{-1} + N^{-1/k})^{2^{1-k}} N^{1/k + o(1)} \ll \frac{N^{1/k + o(1)}}{L^{2^{1-k}}} = N^{\frac{1}{k} - 2^{1-k}\nu + o(1)}. \qedhere
  \end{align*}
 \end{proof}
 
 To prove Proposition \ref{minorwar}, we need two auxiliary lemmas concerning the integral of $T^{\sharp}$.
 
 \begin{lem}\label{war-cntsols}
  For $\ell \geq 2H_0$, we have
  \[  \int_{0}^{1} |T(\alpha;x)|^{\ell}\,\mathrm{d}\alpha \ll x^{\max\{\ell\omega-1,\,0\} +o(1)},\qquad \int_{0}^{1} |T^{\sharp}(\alpha;x)|^{\ell}\,\mathrm{d}\alpha \ll x^{\ell\omega-1 +o(1)}. \]
 \end{lem}
 \begin{proof}
  This follows from Lemmas \ref{weighted-hua} and \ref{dyadic-hua} (see the Remark after Lemma \ref{war-hua}).
 \end{proof}
 
 \begin{lem}\label{war-bdH}
  For $h\geq 2H_0+1$, we have
  \[ \int_{\mathfrak{m}} |T^{\sharp}(\alpha; N)|^h\,\mathrm{d}\alpha \ll N^{h\omega-1 - 2^{1-k}\nu + o(1)}. \]
 \end{lem}
 \begin{proof}
  The argument of Corollary \ref{war-condv} applies uniformly to $[N/h,N]$, so 
  \[ \sup_{N/h\leq t\leq N} |g(\alpha;t)| \ll N^{\frac{1}{k} - 2^{1-k}\nu + o(1)} \]
  for $\alpha\in\mathfrak{m}$. Hence, for $\alpha\in\mathfrak{m}$, partial summation yields
  \begin{align}
   T^{\sharp}(\alpha; N) &= \frac{N^{\omega}}{N^{1/k}}\, g(\alpha;N) - \frac{(N/h)^{\omega}}{(N/h)^{1/k}} \, g(\alpha; N/h) - \int_{N/h}^{N} g(\alpha;t)\,\mathrm{d}\bigg(\frac{t^{\omega}}{t^{1/k}}\bigg) \nonumber \\
   &\ll \frac{N^{\omega}}{N^{1/k}}\, N^{\frac{1}{k} - 2^{1-k}\nu + o(1)} = N^{\omega - 2^{1-k}\nu + o(1)}. \label{bdsup}
  \end{align}
  Since $h\geq 2H_0+1$, it follows from Lemma \ref{war-cntsols} that
  \begin{align*}
   \int_{\mathfrak{m}} |T^{\sharp}(\alpha;N)|^h\,\mathrm{d}\alpha &\leq \bigg(\sup_{\alpha\in\mathfrak{m}} |T^{\sharp}(\alpha; N)|^{h-2H_0} \bigg)\,\int_{0}^{1} |T^{\sharp}(\alpha;N)|^{2H_0}\,\mathrm{d}\alpha \\
   &\ll (N^{\omega - 2^{1-k}\nu + o(1)})^{h-2H_0}\, N^{2H_0\omega - 1 + o(1)} \\
   &\ll N^{h\omega-1 - 2^{1-k}\nu + o(1)}. \qedhere
  \end{align*}
 \end{proof} 
 
 Up to here, we only needed to assume $\omega > 0$. Only in the final argument to bound the minor arc integral we will use that $\omega \geq 1/h - \delta$ for some $0 \leq \delta < (h-2H_0)/(2h(h-1)H_0)$.
 
 \begin{proof}[Proof of Proposition \ref{minorwar}]
  Assume first that $\omega \geq 1/h$, so $\max\{h\omega-1,\,0\} = h\omega-1$. Then, by Lemmas \ref{war-cntsols} and \ref{war-bdH}, H\"older's inequality yields
  \begin{align*}
   \sum_{j=1}^{h} \bigg|\int_{\mathfrak{m}} T^{\sharp}(\alpha; N)^{j}\,T(\alpha;N)^{h-j}\,& e(-N\alpha) \,\mathrm{d}\alpha\bigg| \\
   &\leq \sum_{j=1}^{h} \bigg(\int_{\mathfrak{m}} |T^{\sharp}(\alpha;N)|^h\,\mathrm{d}\alpha \bigg)^{j/h}\,\bigg(\int_{0}^{1} |T(\alpha;N)|^{h}\,\mathrm{d}\alpha\bigg)^{1-j/h} \\
   &\leq \sum_{j=1}^{h} (N^{h\omega - 1 - 2^{1-k}\nu + o(1)})^{j/h}\, (N^{h\omega-1 + o(1)})^{1-j/h} \\
   &\ll N^{h\omega-1 - 2^{1-k}\nu/h + o(1)},
  \end{align*}
  proving the first part.

  Now suppose that $1/h-\delta \leq \omega < 1/h$, and put $\theta:=1-(h-1)\omega$. Our hypothesis on $\delta$ ensures that
  \[ \theta \leq \frac{1}{h}+(h-1)\delta < \frac{1}{2H_0}. \]
  For $1\leq j\leq h$, another application of H\"older's inequality (cf. Wooley \cite[Eq. (2.7)]{woo04}) gives
  \begin{equation}\label{minarc-holder2}
   \bigg|\int_{\mathfrak{m}} T^{\sharp}(\alpha;N)^{j}\,T(\alpha;N)^{h-j}\,e(-N\alpha)\,\mathrm{d}\alpha\bigg| \ll \bigg(\sup_{\alpha\in\mathfrak{m}} |T^{\sharp}(\alpha;N)|\bigg)^{1-2H_0\theta}\, \Upsilon_1^{\theta}\, \Upsilon_2^{(j-1)\omega}\, \Upsilon_3^{(h-j)\omega},
  \end{equation}
  where
  \[ \Upsilon_1:=\int_{0}^{1} |T^{\sharp}(\alpha;N)|^{2H_0}\,\mathrm{d}\alpha,\quad
     \Upsilon_2:=\int_{0}^{1} |T^{\sharp}(\alpha;N)|^{1/\omega}\,\mathrm{d}\alpha,\quad
     \Upsilon_3:=\int_{0}^{1} |T(\alpha;N)|^{1/\omega}\,\mathrm{d}\alpha. \]
  By \eqref{bdsup} in Lemma \ref{war-bdH} we have $\sup_{\alpha\in\mathfrak{m}} |T^{\sharp}(\alpha;N)| \ll N^{\omega-2^{1-k}\nu+o(1)}$, and by Lemma \ref{war-cntsols} we have $\Upsilon_1 \ll N^{2H_0\omega-1+o(1)}$ and $\Upsilon_2, \Upsilon_3 \ll N^{o(1)}$ (since $(1/\omega)\omega-1=0$ and $1/\omega > h > 2H_0$ in the present range). Substituting these estimates into \eqref{minarc-holder2}, we obtain
  \[ \bigg|\int_{\mathfrak{m}} T^{\sharp}(\alpha;N)^{j}\,T(\alpha;N)^{h-j}\,e(-N\alpha)\,\mathrm{d}\alpha\bigg| \ll N^{h\omega-1- 2^{1-k}\nu(1-2H_0\theta) + o(1)}, \]
  which, since $1-2H_0\theta \geq \frac{h-2H_0}{h} - 2H_0(h-1)\delta > 0$, concludes the proof.
 \end{proof}

\section{Waring--Goldbach subbases: Theorem \ref{MTp1}}
 In this section we prove Theorem \ref{MTp1}. The argument parallels the proof of Theorem \ref{MT1}. The central analytic input is a weighted asymptotic formula for the Waring--Goldbach representation problem (Theorem \ref{MTp}), in the same spirit as Theorem \ref{wolem}. The circle method framework is standard, and the calculations run analogously to those of Section \ref{waring1}. We start by verifying that $\P^k$ satisfies the Hua-type hypothesis \eqref{hua-type} required by the general subbasis principle. 
 
 \begin{lem}[Mean value estimate]\label{condiv}
  Let $k\geq 1$, and let $H_0=H_0(k)$ be as in \eqref{h0k}. Then, for every integer $\ell \geq H_0$,
  \[ \int_{0}^{1} \bigg| \sum_{p\leq x} e(\alpha p^{k})\bigg|^{2\ell} \,\mathrm{d}\alpha \ll x^{2\ell - k}(\log x)^M \]
  for some constant $M = M(k) >0$. When $k\geq 5$, the factor $(\log x)^M$ can be dropped.
 \end{lem}
 \begin{proof}
  By the orthogonality of the $e(\alpha n)$, the integral $\int_{0}^{1} | \sum_{p\leq x} e(\alpha p^{k})|^{2\ell}\,\mathrm{d}\alpha$ counts the number of solutions to the equation 
  \begin{equation*}
   p_1^k + \cdots + p_{\ell}^k = q_1^k + \cdots + q_{\ell}^k
  \end{equation*}
  with $p_i,q_j \leq x$ primes. This is bounded from above by the number of solutions to $\sum_{i=1}^{\ell} n_i^k = \sum_{i=1}^{\ell} m_i^k$ with $n_i, m_i \leq x$ positive integers, hence
  \[ \int_{0}^{1} \bigg| \sum_{p\leq x} e(\alpha p^{k})\bigg|^{2\ell} \,\mathrm{d}\alpha \leq \int_{0}^{1} \bigg| \sum_{n\leq x} e(\alpha n^{k})\bigg|^{2\ell} \,\mathrm{d}\alpha. \]
  The result then follows from Lemma \ref{war-hua}. \qedhere
 \end{proof}
 
 \begin{xrem}
  From Lemma \ref{condiv}, by Parseval's identity, we have
  \[ \sum_{n\leq x} r_{\P^{k},\ell}(n)^{2} \leq \int_{0}^{1} \bigg| \sum_{p\leq x^{1/k}} e(\alpha p^{k})\bigg|^{2\ell} \,\mathrm{d}\alpha \ll x^{2\ell/k - 1}(\log x)^M, \]
  which, since $|\P^k\cap[0,x]| \asymp x^{1/k}/\log x$, is in the shape of \eqref{hua-type}.
 \end{xrem}
 
 By the prime number theorem, $|\P^k\cap[0,x]| = |\P\cap [0,x^{1/k}]| \sim kx^{1/k}/\log x$, so $\P^k$ satisfies \eqref{oregvar}. With Lemma \ref{condiv}, $\P^k$ satisfies \eqref{hua-type}. Thus, assuming Theorem \ref{MTp}, Theorem \ref{MTp1} follows from Corollary \ref{cor-tech2} applied to every $0\leq \kappa\leq h/k-1$.

\subsection{Setup for Theorem \ref{MTp}}
 We will now prove Theorem \ref{MTp}. Fix $k\geq 1$, and let $h\geq 2H_0+1$ be an integer, where $H_0 = H_0(k)$ is as in \eqref{h0k}. Let $0 < \delta < (h-2H_0)/2h(h-1)H_0$, let $\omega \geq 1/h - \delta$ be a real number, and let $N$ be a large integer. Define
 \begin{equation}
  T(\alpha;x) = \sum_{n\leq x} n^{\omega}\, \frac{\mathbbm{1}_{\P^k}(n)}{n^{1/k}}\,(\log n) \, e(\alpha n), \quad T^{\sharp}(\alpha;x) = \sum_{x/h\leq n\leq x} n^{\omega}\, \frac{\mathbbm{1}_{\P^k}(n)}{n^{1/k}}\, (\log n)\, e(\alpha n), \label{Tprime}
 \end{equation}
 so that, by the same argument as in Section \ref{waring1},
 \begin{align}
  \sum_{\substack{x_1,\ldots,x_h \in \P^k \\ x_1+\cdots+x_h = N}} (x_1\cdots x_h&)^{\omega-\frac{1}{k}} (\log x_1\cdots \log x_h) \nonumber \\
  &= \int_{0}^{1} \big(T(\alpha;N)^h - (T(\alpha;N)-T^{\sharp}(\alpha;N))^h\big)\, e(-N\alpha)\,\mathrm{d}\alpha \nonumber \\
  &= \sum_{j=1}^{h} (-1)^{j+1}\binom{h}{j} \int_{0}^{1} T^{\sharp}(\alpha;N)^{j}\,T(\alpha;N)^{h-j}\, e(-N\alpha)\,\mathrm{d}\alpha. \label{sothat}
 \end{align} 
 
 We once again divide $[0,1]$ into major arcs $\mathfrak{M}$ and minor arcs $\mathfrak{m}$, where the major arcs $\mathfrak{M}=\mathfrak{M}_N$ are defined this time as
 \[ \mathfrak{M} := \bigsqcup_{q\leq Q}\bigsqcup_{\substack{a=1 \\ (a,q)=1}}^{q} \mathfrak{M}(q,a), \quad \mathfrak{M}(q,a) := \bigg\{\alpha \in [0,1] ~\bigg|~ \bigg\|\alpha -\frac{a}{q}\bigg\| \leq \frac{Q}{N} \bigg\}  \]
 for a parameter $Q = Q(N) := (\log N)^{C}$, where $C>1$ is a large fixed constant. The minor arcs are defined as the complement, $\mathfrak{m} = \mathfrak{m}_N := [0,1]\setminus \mathfrak{M}$. By \eqref{sothat}, Theorem \ref{MTp} will follow from a major arc evaluation (Proposition \ref{sssi}) and a minor arc bound saving a power of $\log$ that becomes arbitrarily large as $C\to\infty$ (Proposition \ref{mARC}).

\subsection{Major arcs}
 In this subsection, we will prove the following: 
 
 \begin{prop}[Major arcs]\label{sssi}
  We have
  \begin{align*}
   \sum_{j=1}^{h} (-1)^{j+1}\binom{h}{j} \int_{\mathfrak{M}} T^{\sharp}(\alpha;N)^{j}\, T(\alpha;\,N)^{h-j}\,&e(-N\alpha)\,\mathrm{d}\alpha \\
   &= \mathfrak{S}^{*}_{k,h}(N)\,\frac{\Gamma(\omega)^h}{\Gamma(h\omega)} \, N^{h\omega-1} + O_R\bigg(\frac{N^{h\omega-1}}{(\log N)^{R}}\bigg),
  \end{align*}
  where $\mathfrak{S}^{*}_{k,h}(N)$ is as in \eqref{singserp}, and $R(C) \to \infty$ as $C\to\infty$.
 \end{prop}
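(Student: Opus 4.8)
The plan is to run the Hardy--Littlewood circle method on the major arcs $\mathfrak{M}$, reducing the expression on the left of Proposition \ref{sssi} to the product of an arithmetic factor (a truncation of the singular series $\mathfrak{S}^{*}_{k,h}(N)$) and an analytic factor (a truncation of the singular integral $\frac{\Gamma(\omega)^{h}}{\Gamma(h\omega)}N^{h\omega-1}$). The starting observation is that the coefficient of $e(\alpha n)$ in $T(\alpha;x)$ from \eqref{Tprime} is supported on $n=p^{k}$ and equals $k\,p^{k\omega-1}\log p$, so that $T(\alpha;x)=k\sum_{p\leq x^{1/k}}p^{k\omega-1}(\log p)\,e(\alpha p^{k})$ is a weighted exponential sum over primes of exactly the type handled by the Waring--Goldbach method; in particular $|T(\alpha;x)|\ll x^{\omega}$ trivially, and likewise $|\widetilde{T}(\alpha;x)|\ll x^{\omega}$. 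It will be essential to keep the combination $\sum_{j\geq 1}(-1)^{j+1}\binom{h}{j}\widetilde{T}^{\,j}T^{h-j}=T^{h}-(T-\widetilde{T})^{h}$ intact rather than expanding it, because the extra decay carried by $\widetilde{T}$ (and by $T-\widetilde{T}$) is precisely what renders the truncated analytic integral convergent when $\omega$ is close to $1/h$.

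\emph{Major-arc approximation.} On $\mathfrak{M}(q,a)$ write $\alpha=a/q+\beta$ with $|\beta|\leq Q/N$, $q\leq Q=(\log N)^{C}$. Splitting $T$ according to residue classes $p\equiv b\pmod q$ with $(b,q)=1$, applying the Siegel--Walfisz theorem for $\theta(t;q,b)$, and summing by parts against the kernel $t^{k\omega-1}e(\beta t^{k})$ (whose logarithmic derivative is $\ll t^{-1}+|\beta|t^{k-1}$, with $|\beta|t^{k}\leq Q$ on $\mathfrak{M}$), the change of variables $u=t^{k}$ yields
\[ T(a/q+\beta;N)=\frac{S(q,a)}{\varphi(q)}\,v(\beta)+O\big(N^{\omega}e^{-c\sqrt{\log N}}\big),\qquad v(\beta):=\int_{0}^{N}u^{\omega-1}e(\beta u)\,\mathrm{d}u, \]
where $S(q,a)$ is the Gauss sum attached to \eqref{singserp}. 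The same estimate holds with $\widetilde{T}$ and $\widetilde{v}(\beta):=\int_{N/h}^{N}u^{\omega-1}e(\beta u)\,\mathrm{d}u$, and with $T-\widetilde{T}$ and $v_{0}(\beta):=\int_{0}^{N/h}u^{\omega-1}e(\beta u)\,\mathrm{d}u=v-\widetilde{v}$. Using $|T|,|\widetilde{T}|,|v|,|v_{0}|\ll N^{\omega}$ and $|S(q,a)/\varphi(q)|\leq 1$, a telescoping estimate gives, pointwise on $\mathfrak{M}$,
\[ \Big|\big(T^{h}-(T-\widetilde{T})^{h}\big)-\Big(\tfrac{S(q,a)}{\varphi(q)}\Big)^{h}\big(v^{h}-v_{0}^{h}\big)\Big|\ll N^{\omega h}e^{-c\sqrt{\log N}}. \]
Since $\mathfrak{M}$ has measure $\ll Q^{3}/N$, integrating shows that, up to an error $O_{R}\big(N^{h\omega-1}(\log N)^{-R}\big)$, the left side of Proposition \ref{sssi} equals $\big(\sum_{q\leq Q}\varphi(q)^{-h}\sum_{(a,q)=1}S(q,a)^{h}e(-Na/q)\big)\int_{|\beta|\leq Q/N}(v^{h}-v_{0}^{h})\,e(-N\beta)\,\mathrm{d}\beta$.

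\emph{Evaluating the two factors.} The summand $A(q):=\varphi(q)^{-h}\sum_{(a,q)=1}S(q,a)^{h}e(-Na/q)$ is multiplicative in $q$, and the Weil-type bound $|S(p^{\ell},a)|\ll p^{\ell(1-1/k)}$ for $(a,p)=1$ gives $|A(p^{\ell})|\ll p^{-\ell(h/k-1)}$, hence $|A(q)|\ll q^{-(h/k-1)}$; so for $h\geq h_{k}^{*}$ (in particular $h>2k$), as in Lemma \ref{SingSerPlem}, the series $\mathfrak{S}^{*}_{k,h}(N)=\sum_{q\geq 1}A(q)$ converges absolutely with $\sum_{q>Q}|A(q)|\ll Q^{-\delta}$ for some $\delta=\delta(k,h)>0$, and therefore $\sum_{q\leq Q}A(q)=\mathfrak{S}^{*}_{k,h}(N)+O\big((\log N)^{-C\delta}\big)$. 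For the analytic factor, since $v_{0}$ and $\widetilde{v}=v-v_{0}$ are (as integrals) supported on $[0,N/h]$ and $[N/h,N]$, we have $v^{h}-v_{0}^{h}=\int_{[0,N]^{h}\setminus[0,N/h]^{h}}t_{1}^{\omega-1}\cdots t_{h}^{\omega-1}\,e(\beta(t_{1}+\cdots+t_{h}))\,\mathrm{d}t_{1}\cdots\mathrm{d}t_{h}$, and a standard regularisation (inserting $e^{-\eps u}$ and letting $\eps\to0$, or a Fej\'er kernel) gives
\[ \int_{\R}(v^{h}-v_{0}^{h})\,e(-N\beta)\,\mathrm{d}\beta=\int_{\substack{t_{1},\dots,t_{h}\geq 0\\ t_{1}+\cdots+t_{h}=N}}t_{1}^{\omega-1}\cdots t_{h}^{\omega-1}\,\mathrm{d}\sigma=\frac{\Gamma(\omega)^{h}}{\Gamma(h\omega)}\,N^{h\omega-1}, \]
because $[0,N]^{h}\setminus[0,N/h]^{h}$ differs from the full simplex $\{\sum t_{i}=N,\ t_{i}\geq 0\}$ by a null set on that hyperplane, the last identity being the Dirichlet integral. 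The tail $\int_{|\beta|>Q/N}(v^{h}-v_{0}^{h})e(-N\beta)\,\mathrm{d}\beta$ is controlled via $v(\beta),v_{0}(\beta)\ll N^{\omega}(N|\beta|)^{-\min(\omega,1)}$ and, crucially, $\widetilde{v}(\beta)\ll|\beta|^{-1}N^{\omega-1}$ (integration by parts on $[N/h,N]$, no stationary point): then $|v^{h}-v_{0}^{h}|\leq h|\widetilde{v}|\max(|v|,|v_{0}|)^{h-1}$ integrates over $|\beta|>Q/N$ to $\ll N^{h\omega-1}Q^{-(h-1)\min(\omega,1)}\ll N^{h\omega-1}(\log N)^{-C(h-1)/h}$, since $\omega\geq 1/h$. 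Combining the steps with $R(C):=\min\{C\delta,\,C(h-1)/h\}\to\infty$ (after also absorbing the Siegel--Walfisz error, which beats every power of $\log N$) proves the proposition.

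\emph{Main obstacle.} The delicate regime is $\omega\to 1/h^{+}$ (equivalently $\kappa\to(h/k-1)^{-}$ in the applications of Theorem \ref{MTp}), where $v^{h}$ ceases to be integrable on $\R$ and the naive circle-method bookkeeping fails: one must verify that $v^{h}-v_{0}^{h}$ — equivalently, the contribution of genuine $h$-fold representations, as opposed to those possessing a part below $N/h$ — really decays fast enough in $\beta$ to justify truncating the singular integral at $|\beta|=Q/N$, and this rests entirely on the $|\beta|^{-1}N^{\omega-1}$ decay of $\widetilde{v}$ (so the combination $T^{h}-(T-\widetilde{T})^{h}$ is substantive, not cosmetic). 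A secondary, more routine nuisance is propagating the Siegel--Walfisz error uniformly over $q\leq(\log N)^{C}$ through the $h$-th power, and tracking the interplay of $C$, the singular-series decay rate $\delta$, and the final savings, so that $R(C)\to\infty$ as asserted.
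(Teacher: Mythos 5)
Your proposal is correct and follows the same overall circle‐method strategy as the paper: Siegel--Walfisz approximation of $T$ and $\widetilde{T}$ on each $\mathfrak{M}(q,a)$, factoring the major‐arc integral into a truncated singular series (Lemma~\ref{SingSerPlem}) times a truncated singular integral, and bounding the tail of the singular integral using the $|\beta|^{-1}$ decay of $\widetilde{v}$ (analogous to the paper's Lemma~\ref{est4} bound on $\widetilde{u}$). The one genuine difference is how the singular integral is evaluated. You extend the $\beta$-integral to all of $\R$ and compute a continuous Dirichlet integral over the simplex, which forces you to confront absolute integrability of $v^h$ near $\omega = 1/h$; you resolve this by keeping the combination $v^h - v_0^h$ intact and invoking a regularisation. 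The paper instead extends only to the compact interval $[0,1]$ (Lemma~\ref{quasiprop}), so no convergence issue ever arises, and then in Lemma~\ref{sgint} it collapses the binomial sum back to $\int_0^1 u(\vartheta;N)^h e(-N\vartheta)\,\mathrm{d}\vartheta$, recognises this by orthogonality as the \emph{discrete} sum $\sum_{x_1+\cdots+x_h=N}(x_1\cdots x_h)^{\omega-1}$ over positive integers, and evaluates it by induction on $h$ using Vaughan's beta-function identity \eqref{vl29}. Both evaluations yield $\frac{\Gamma(\omega)^h}{\Gamma(h\omega)}N^{h\omega-1}$, but the paper's route stays entirely within finite sums and compact intervals, avoiding any appeal to Fourier inversion or regularisation on the whole line. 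A second, cosmetic difference: you work with the continuous archetypes $v,v_0,\widetilde{v}=\int u^{\omega-1}e(\beta u)\,\mathrm{d}u$ whereas the paper uses the discrete sums $u,\widetilde{u}$ from \eqref{defu}; this is immaterial.

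Two small inaccuracies worth flagging, neither fatal: the blanket assertion $|S(q,a)/\varphi(q)|\leq 1$ is not true for all small $q$ (it is only $O(1)$ uniformly, which is all you actually need for the telescoping pointwise estimate); and the bound $|S(p^\ell,a)|\ll p^{\ell(1-1/k)}$ that you attribute to Weil is Hua's estimate (the paper cites \cite[Theorem 1]{hua65}). Neither affects the validity of your argument.
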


 Given $q\geq 1$, $1\leq a\leq q$, define
 \[ \pi(x;q,a) := \sum_{\substack{n\leq x\\ n\equiv a(q)}} \mathbbm{1}_{\P}(n). \]
 We start with the classical estimate for primes in arithmetic progressions due to Siegel and Walfisz (cf. Montgomery--Vaughan \cite[Corollary 11.21]{montvaug06}), and a weighted consequence of it.
  
 \begin{lem}[Siegel--Walfisz]\label{siewal}
  Uniformly for $q\leq (\log x)^C$, $1\leq a\leq q$ with $(a,q)=1$, we have
  \[ \pi(x;q,a) = \bigg(1 + O_{R}\bigg(\frac{1}{(\log x)^{R}}\bigg)\bigg)\frac{\li(x)}{\varphi(q)} \]
  for every $R>1$, where $\li(x) = \int_{2^{-}}^{x} (\log t)^{-1}\,\mathrm{d}t$.
 \end{lem}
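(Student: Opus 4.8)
The plan is to recover this from the prime number theorem for arithmetic progressions via the standard character decomposition, i.e.\ to reprove the Siegel--Walfisz theorem. First I would pass to the von Mangoldt weight: set $\psi(x;q,a) := \sum_{n\le x,\, n\equiv a\,(q)} \Lambda(n)$, observe that proper prime powers contribute $O(\sqrt{x}\log x)$, and recover $\pi(x;q,a)$ from $\theta(x;q,a)$ (hence from $\psi(x;q,a)$) by partial summation, which also converts a main term $x/\varphi(q)$ for $\psi$ into $\li(x)/\varphi(q)$ for $\pi$ with an admissible error. By orthogonality of the Dirichlet characters mod $q$, for $(a,q)=1$ one has $\psi(x;q,a) = \varphi(q)^{-1}\sum_{\chi \bmod q}\bar\chi(a)\,\psi(x,\chi)$, where $\psi(x,\chi) := \sum_{n\le x}\Lambda(n)\chi(n)$.

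The principal character contributes $\varphi(q)^{-1}\psi(x,\chi_0) = \varphi(q)^{-1}\big(\psi(x) + O(\log x\log q)\big)$, and the prime number theorem in the form $\psi(x) = x + O\big(x\exp(-c\sqrt{\log x})\big)$ supplies the main term $\li(x)/\varphi(q)$. For a non-principal $\chi$ mod $q$, induced by a primitive $\chi^*$ mod $q^*$ with $q^*\mid q$, the passage from $\psi(x,\chi)$ to $\psi(x,\chi^*)$ costs $O(\log x\log q)$; then the truncated explicit formula expresses $\psi(x,\chi^*)$ as $-\sum_{|\gamma|\le T} x^\rho/\rho + O\big(x(\log qx)^2/T\big)$, the sum running over the nontrivial zeros $\rho = \beta+i\gamma$ of $L(s,\chi^*)$. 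Invoking the classical zero-free region --- there is $c_1>0$ such that $L(s,\chi^*)\ne 0$ for $\sigma > 1 - c_1/\log(q^*(|t|+2))$, with the possible exception of a single simple real zero $\beta_1$ when $\chi^*$ is real --- and taking $T$ a suitable power of $\log x$, one obtains $\psi(x,\chi)\ll x\exp(-c_2\sqrt{\log x})$, uniformly for $q\le Q=(\log x)^{C}$, once the contribution of the exceptional zero is discarded.

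The main obstacle, as always, is the possible Siegel zero $\beta_1$ of a real character $\chi^*$. Here I would invoke Siegel's theorem: for every $\eps>0$ there is an (ineffective) constant $c(\eps)>0$ with $\beta_1 < 1 - c(\eps)(q^*)^{-\eps}$. Choosing $\eps = 1/(2C)$ and using $q^*\le q\le(\log x)^{C}$, the offending term satisfies $x^{\beta_1}/\beta_1 \ll x\exp\big(-c(\eps)(\log x)^{1-C\eps}\big) = x\exp\big(-c(\eps)\sqrt{\log x}\big)$, which is $O_{C,R}\big(x/(\log x)^{R}\big)$ for every $R>1$. Collecting the principal term, the at most $\varphi(q)-1 < Q$ non-principal terms, and this bound, then dividing by $\varphi(q)$ and translating back from $\psi$ to $\pi$ by partial summation, yields $\pi(x;q,a) = \big(1 + O_{C,R}((\log x)^{-R})\big)\li(x)/\varphi(q)$ uniformly in $q\le Q$ with $(a,q)=1$. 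The one bookkeeping point worth flagging is that the total error from non-principal characters must beat $\li(x)/\varphi(q)\asymp x/(\varphi(q)\log x)$ after summing $Q$ of them, which holds since $Q^{2}\exp(-c_2\sqrt{\log x})$ is still $O_R((\log x)^{-R})$. Since the paper uses this only qualitatively, the ineffectivity of $c(\eps)$, and hence of the implied constant, is harmless.
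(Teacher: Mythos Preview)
Your sketch is a correct outline of the standard proof of the Siegel--Walfisz theorem: character decomposition of $\psi(x;q,a)$, explicit formula and the classical zero-free region for non-principal characters, Siegel's ineffective bound to handle a possible exceptional real zero, and partial summation to pass from $\psi$ to $\pi$. The parameter choice $\eps = 1/(2C)$ is exactly right, and the bookkeeping is fine (in fact a single factor of $Q$ suffices in your final check rather than $Q^2$, since the $\varphi(q)^{-1}$ in front cancels against the $\varphi(q)-1$ non-principal characters).

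The paper, however, does not prove this lemma at all: it is stated as a classical input and simply attributed to Montgomery--Vaughan \cite[Corollary 11.21]{montvaug06}. So your proposal is not so much a different route as a self-contained reproof of a result the paper treats as a black box. That is perfectly acceptable --- and arguably more informative --- but it is worth noting that nothing in the paper's subsequent arguments depends on the mechanism of the proof, only on the conclusion (including its ineffectivity, which as you observe is harmless here).
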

  
 \begin{cor}\label{est1}
  Uniformly for $q\leq (\log x)^C$, $1\leq a \leq q$ with $(a,q)=1$, we have
  \[ \sum_{\substack{n\leq x\\ n\equiv a(q)}} n^{\omega}\,\frac{\mathbbm{1}_{\P^k}(n)}{n^{1/k}}\log n = \bigg(1+O_{R}\bigg(\frac{1}{(\log x)^R}\bigg)\bigg)\frac{P_k(q,a)}{\varphi(q)}\frac{x^{\omega}}{\omega} \]
  for every $R>1$, where $P_k(q,a) = \#\{1\leq r\leq q ~|~ r^k \equiv a\pmod{q}\}$.
 \end{cor}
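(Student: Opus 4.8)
The plan is to reduce the claim to the Siegel--Walfisz theorem (Lemma \ref{siewal}) via partial summation, after reparametrizing the sum over $\P^k$ by primes. If $P_k(q,a)=0$ the left-hand sum is empty (a prime $p$ with $p^k\equiv a\pmod q$ would yield a solution of $r^k\equiv a\pmod q$, and $(a,q)=1$ forces $p\nmid q$), while the right-hand side also vanishes; so assume $P_k(q,a)\geq 1$ and let $r_1,\dots,r_{P_k(q,a)}$ be the residues modulo $q$ with $r_i^k\equiv a\pmod q$, each coprime to $q$ since $(a,q)=1$. Writing every $n\in\P^k$ as $n=p^k$ with $p$ prime, so that $n\leq x\iff p\leq x^{1/k}$ and $n^{\omega}\,(n^{1/k}/\log n)^{-1}=k\,p^{k\omega-1}\log p$, the sum in question becomes
\[ k\sum_{i=1}^{P_k(q,a)}\ \sum_{\substack{p\leq x^{1/k}\\ p\equiv r_i\pmod q}} p^{k\omega-1}\log p. \]

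For a fixed class $r=r_i$ I would evaluate the inner sum by Abel summation against $\pi(t;q,r)$,
\[ \sum_{\substack{p\leq x^{1/k}\\ p\equiv r\pmod q}} p^{k\omega-1}\log p = \int_{2^-}^{x^{1/k}} t^{k\omega-1}\log t\ \mathrm{d}\pi(t;q,r), \]
and substitute $\pi(t;q,r)=\li(t)/\varphi(q)+E(t)$ from Lemma \ref{siewal}, where $E(t)\ll_{C,R}\li(t)/(\varphi(q)(\log t)^{R})\ll t/(\varphi(q)(\log t)^{R+1})$ uniformly for $q\leq Q$. The contribution of the main term $\li(t)/\varphi(q)$ is
\[ \frac{1}{\varphi(q)}\int_{2}^{x^{1/k}} t^{k\omega-1}\log t\cdot\frac{\mathrm{d}t}{\log t}=\frac{1}{\varphi(q)}\int_{2}^{x^{1/k}} t^{k\omega-1}\,\mathrm{d}t=\frac{x^{\omega}}{k\omega\,\varphi(q)}+O_{\omega}\!\bigg(\frac{1}{\varphi(q)}\bigg), \]
while the contribution of $E$, handled by one more integration by parts, is $\ll_{C,R,\omega} x^{\omega}/(\varphi(q)(\log x)^{R})$: the boundary term at $x^{1/k}$ is already of this size, and the remaining integral is $\ll \varphi(q)^{-1}\int_{2}^{x^{1/k}}t^{k\omega-1}(\log t)^{-R}\,\mathrm{d}t\ll x^{\omega}/(\varphi(q)(\log x)^{R})$, since for fixed $\omega>0$ that integral is governed by its upper endpoint. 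Multiplying by $k$ and summing over the $P_k(q,a)$ classes yields $\frac{P_k(q,a)}{\varphi(q)}\frac{x^{\omega}}{\omega}+O_{C,R,\omega}\!\left(\frac{P_k(q,a)}{\varphi(q)}\frac{x^{\omega}}{(\log x)^{R}}\right)$, which is the asserted formula.

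There is no genuinely hard step here — the argument is just partial summation plus Siegel--Walfisz. The only points to watch are (i) uniformity of all error estimates over $q\leq Q$, which is precisely what the Siegel--Walfisz range $q\leq Q=(\log N)^{C}$ supplies, and (ii) checking that the lower-limit term $O_{\omega}(1/\varphi(q))$ and the polylogarithmic losses from the integrations by parts are negligible against the main term, which is $\gg_{\omega} x^{\omega}/(\log x)^{C}$ because $\varphi(q)\leq Q$; this is immediate since, for fixed $\omega>0$, $x^{-\omega}(\log x)^{C}$ tends to $0$ faster than any negative power of $\log x$. After renaming $R$ to absorb these losses one recovers the clean multiplicative error $(1+O_{C,R}((\log x)^{-R}))$ in front of the main term.
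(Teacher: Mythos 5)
Your proof is correct and takes essentially the same approach as the paper's: decompose the sum over $\P^k$ in the class $a\pmod q$ into $P_k(q,a)$ sums over primes in the classes $r_i\pmod q$ with $r_i^k\equiv a\pmod q$, then apply Siegel--Walfisz via partial summation. The only cosmetic difference is that you reparametrize explicitly as $n=p^k$ and work with the variable $p$, whereas the paper keeps the Riemann--Stieltjes integral $\int \frac{t^\omega}{t^{1/k}/\log t}\,\mathrm{d}\pi(t^{1/k};q,r_i)$ in the variable $t$; the substance of the calculation, including the absorption of the $O_\omega(1/\varphi(q))$ lower-limit term and the polylogarithmic losses into a renamed $R$, matches the paper's argument.
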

 \begin{proof}
  Write $P = P_k(q,a)$, and let $1\leq r_1,\ldots,r_P\leq q$ be such that $r_i^k \equiv a\pmod{q}$. Since $\#\{n\leq x ~|~ n\in \P^{k},\ n\equiv a\pmod{q}\} = \sum_{i=1}^{P} \pi(x^{1/k};q,r_i)$, we have
  \begin{align*}
   \sum_{\substack{n\leq x\\ n\equiv a(q)}} n^{\omega}\,\frac{\mathbbm{1}_{\P^k}(n)}{n^{1/k}}\log n
   &= \sum_{i=1}^{P} \int_{2^{-}}^{x} t^{\omega-\frac{1}{k}}(\log t)\, \mathrm{d}\pi(t^{1/k};q,r_i) \\
   &= \frac{P}{\varphi(q)}\frac{x^{\omega}}{\omega} + \sum_{i=1}^{P} \int_{2^{-}}^{x} t^{\omega-\frac{1}{k}}(\log t)\, \mathrm{d}\bigg(\pi(t^{1/k};q,r_i) - \frac{\li(t^{1/k})}{\varphi(q)} \bigg).
  \end{align*}
  By partial summation, Lemma \ref{siewal} implies that
  \begin{align*}
   \int_{2^{-}}^{x} t^{\omega-\frac{1}{k}}&(\log t)\, \mathrm{d}\bigg(\pi(t^{1/k};q,r_i) - \frac{\li(t^{1/k})}{\varphi(q)} \bigg) \\
   &= \bigg(\pi(x^{1/k};q,r_i) - \frac{\li(x^{1/k})}{\varphi(q)} \bigg)\, x^{\omega-\frac{1}{k}}\log x \\
   &\hspace{8em} - \int_{2^{-}}^{x} \bigg(\pi(t^{1/k};q,r_i) - \frac{\li(t^{1/k})}{\varphi(q)} \bigg) \,t^{\omega-\frac{1}{k}-1}(1+(\omega-\tfrac{1}{k})\log t)\,\mathrm{d}t \\
   &\ll_{R} \frac{x^{\omega}}{(\log x)^{R}}, \end{align*}
  uniformly for $q\leq (\log x)^C$. Choosing $R$ larger by $C$ (and relabeling $R$) absorbs the factor $P\leq q$, concluding the proof.
 \end{proof}
 
 Recall the definitions of $U$ and $U^{\sharp}$ from Section \ref{waring1} (cf. \eqref{defU}):
 \begin{equation*}
  U(\vartheta; x) := \sum_{n\leq x} n^{\omega -1} e(n\vartheta), \qquad U^{\sharp}(\vartheta; x) := \sum_{x/h\leq n\leq x} n^{\omega -1} e(n\vartheta).
 \end{equation*}
 Note that the definitions of $T$, $T^{\sharp}$ in this section are different (tailored for prime powers), but for $U$, $U^{\sharp}$ they are the same. We now derive an approximation of $T$, $T^{\sharp}$ in terms of $U$, $U^{\sharp}$ that is analogous to that in Lemma \ref{war-est2}. 
 
 \begin{lem}\label{est2}
  Let $q\leq Q$, and $1\leq a\leq q$ with $(a,q)=1$. For $\alpha \in \mathfrak{M}(q,a)$, write $\vartheta := \alpha - \frac{a}{q}$. Then, for every $R>1$, we have
  \[ T(\alpha; N) = \frac{S^{*}(q,a)}{\varphi(q)} U(\vartheta; N) + O_R\bigg(\frac{N^{\omega}}{(\log N)^R}\bigg), \]
  \[ T^{\sharp}(\alpha; N) = \frac{S^{*}(q,a)}{\varphi(q)} U^{\sharp}(\vartheta; N) + O_R\bigg(\frac{N^{\omega}}{(\log N)^R}\bigg) \]
  uniformly in $q\leq Q$, where $S^{*}(q,a) = \sum_{\substack{1\leq r\leq q \\ (r,q)=1}} e(ar^k/q)$.
 \end{lem}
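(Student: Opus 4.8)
The plan is a routine major-arc computation via Siegel--Walfisz and partial summation; essentially all the substance lies in keeping the error terms uniform in $q\leq Q$.

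First I would write $\alpha=a/q+\vartheta$ on $\mathfrak{M}(q,a)$, where $|\vartheta|\leq Q/N$, so that $e(\alpha n)=e(an/q)e(\vartheta n)$, and then split the sum defining $T(\alpha;N)$ according to the residue of the prime $p$ (recall only $n=p^k$ contribute). The primes $p\mid q$ number $O(\log q)=O(\log\log N)$, and since each then satisfies $p\leq q\leq(\log N)^C$ they contribute only polylogarithmically much, so they may be absorbed into the error. For $p\nmid q$ we have $p\equiv r\pmod q$ with $(r,q)=1$ and $p^k\equiv r^k\pmod q$, hence $e(ap^k/q)=e(ar^k/q)$, and one is left with $T(\alpha;N)=\sum_{(r,q)=1}e(ar^k/q)\,T_r(\vartheta;N)+O_R\big(N^\omega(\log N)^{-R}\big)$, where $T_r(\vartheta;N)$ is the same weighted sum restricted to primes in the class $r$.

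Next I would evaluate each $T_r$ by partial summation. From Siegel--Walfisz (Lemma~\ref{siewal}), exactly as in the proof of Corollary~\ref{est1} but restricted to a single residue class $r$ of the prime, the partial sums $A_r(t):=\sum_{n\leq t,\,n\in\P^k,\,n^{1/k}\equiv r\,(q)}n^\omega(\log n)/n^{1/k}$ satisfy $A_r(t)=\tfrac{t^\omega}{\omega\varphi(q)}+O_R\!\big(t^\omega\varphi(q)^{-1}(\log t)^{-R}\big)$ for every $R>1$. Writing $T_r(\vartheta;N)=A_r(N)e(\vartheta N)-2\pi i\vartheta\int_1^N A_r(t)e(\vartheta t)\,dt$, substituting this expansion, and using $|\vartheta|\leq Q/N$ to bound the integral of the error term by $\ll QN^\omega\varphi(q)^{-1}(\log N)^{-R}$, one gets $T_r(\vartheta;N)=\tfrac{1}{\omega\varphi(q)}\big[N^\omega e(\vartheta N)-2\pi i\vartheta\int_1^N t^\omega e(\vartheta t)\,dt\big]+O_R\big(QN^\omega\varphi(q)^{-1}(\log N)^{-R}\big)$. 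A second partial summation, applied to $u(\vartheta;N)=\sum_{n\leq N}n^{\omega-1}e(n\vartheta)$ via $\sum_{n\leq t}n^{\omega-1}=\tfrac{t^\omega}{\omega}+O_\omega(1+t^{\omega-1})$, identifies the bracketed expression with $\omega\,u(\vartheta;N)$ up to an error $O_\omega\big(QN^{\max(0,\omega-1)}\big)\ll_R N^\omega(\log N)^{-R}$. Summing over the $\varphi(q)$ classes $r$, noting that $\sum_{(r,q)=1}e(ar^k/q)$ is the Gauss sum $S(q,a)$ (with the restriction $(r,q)=1$ in force, as for the Waring--Goldbach singular series~\eqref{singserp}), and that $Q=(\log N)^C$ so that running the argument with exponent $R+C$ in place of $R$ (legitimate, as Corollary~\ref{est1} holds for every exponent) absorbs the extra factor $Q$, yields the claimed formula for $T$, uniformly in $q\leq Q$. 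For $\widetilde T$ I would repeat everything with all sums over $n$ restricted to $N/h\leq n\leq N$, i.e.\ over primes in $[(N/h)^{1/k},N^{1/k}]$; this interval still has length $\gg N^{1/k}$, so Siegel--Walfisz applies at both endpoints and the argument goes through verbatim with $\widetilde u(\vartheta;N)$ in place of $u(\vartheta;N)$.

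The hard part is precisely this uniformity in $q$. The Siegel--Walfisz error is \emph{relative} --- a factor $1+O_R((\log t)^{-R})$ multiplying a main term of size $\asymp t^\omega/\varphi(q)$ --- and once it is integrated against $e(\vartheta t)$ across the whole major arc it is amplified by $|\vartheta|\cdot N\asymp Q$; one must check that this amplification is still beaten by a fresh power of $\log N$, which succeeds only because $Q$ is merely polylogarithmic in $N$. A secondary, purely bookkeeping, point is to confirm that the coefficient produced by the residue decomposition really is the Gauss sum attached to $\mathfrak{S}^{*}_{k,h}$ --- the coprimality restriction $(r,q)=1$ being forced by summing over primes --- and that the $O(\log q)$ terms with $p\mid q$, where Siegel--Walfisz says nothing, contribute only within the permitted error.
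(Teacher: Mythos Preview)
Your proposal is correct and follows essentially the same approach as the paper: split over residues of the prime modulo $q$, invoke Siegel--Walfisz (via Corollary~\ref{est1}), and use partial summation together with $|\vartheta|\leq Q/N$ to control the amplified error. The only organizational difference is that the paper first evaluates $T(a/q;t)$ for all $t\leq N$, forms the single difference $D(n)$ between the summand of $T$ and $\tfrac{S(q,a)}{\varphi(q)}n^{\omega-1}$, and applies one partial summation to $\sum D(n)e(n\vartheta)$; you instead apply partial summation inside each residue class and then a second time to identify the bracket with $u(\vartheta;N)$ --- equivalent bookkeeping, with the paper's version saving one step. Your care about the coprimality restriction in the Gauss sum and the absorption of the factor $Q=(\log N)^C$ by taking a larger Siegel--Walfisz exponent matches exactly what the paper needs.
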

 \begin{proof}
  We prove the lemma for $T$; the argument for $T^{\sharp}$ is analogous.
  
  By Lemma \ref{est1}, for $x\leq N$ we have
  \begin{align}
   T\bigg(\frac{a}{q}; x\bigg) &= \sum_{n\leq x} n^{\omega}\,\frac{\mathbbm{1}_{\P^k}(n)}{n^{1/k}}\,(\log n)\, e\bigg(\frac{na}{q}\bigg) \nonumber \\
   &= \sum_{\substack{r=1 \\ (r,q)=1}}^{q} \Bigg(\sum_{\substack{n\leq x\\ n\equiv r(q)}} n^{\omega}\,\frac{\mathbbm{1}_{\P^k}(n)}{n^{1/k}} \log n\Bigg) e\bigg(\frac{ra}{q}\bigg) + O(q) \nonumber \\
   &= \Bigg(\frac{1}{\varphi(q)} \sum_{\substack{r=1 \\ (r,q)=1}}^{q} P_k(q,r)\,e\bigg(\frac{ra}{q} \bigg) \Bigg) \frac{x^{\omega}}{\omega} + O_R\bigg(\frac{N^{\omega}}{(\log N)^R}\bigg) \nonumber \\
   &= \frac{S^{*}(q,a)}{\varphi(q)} \frac{x^{\omega}}{\omega} + O_R\bigg(\frac{N^{\omega}}{(\log N)^R}\bigg). \label{gxab}
  \end{align}  
  (Note that $T^{\sharp}(\frac{a}{q};x) = T(\frac{a}{q};x) - T(\frac{a}{q}; \frac{x}{h} -1)$.) Write
  \begin{align*}
   T(\alpha;N) - \frac{S^{*}(q,a)}{\varphi(q)} U(\vartheta;N) = \sum_{n\leq N} \underbrace{\Bigg(n^{\omega}\, \frac{\mathbbm{1}_{\P^k}(n)}{n^{1/k}}\,(\log n)\, e\bigg(\frac{na}{q}\bigg) - \frac{S^{*}(q,a)}{\varphi(q)} n^{\omega-1} \Bigg)}_{=:\, D(n)} e(n\vartheta)
  \end{align*}
  By \eqref{gxab}, and the fact that $\sum_{n\leq t} n^{\omega-1} = t^{\omega}/\omega  + O(1 + t^{\omega-1})$, for $t\leq N$ we have $\sum_{n\leq t} D(n) = O_{R}(N^{\omega}/(\log N)^{R})$. By partial summation, since $|\vartheta| \leq Q/N = (\log N)^{C}/N$,
  \begin{align*}
   T(\alpha;N) - \frac{S^{*}(q,a)}{\varphi(q)} U(\vartheta;N) &= \bigg(\sum_{n\leq N} D(n)\bigg) e(N\vartheta) - 2\pi i \vartheta \int_{1}^{N} \bigg(\sum_{n\leq t} D(n)\bigg) e(t\vartheta)\,\mathrm{d}t \\
   &\ll \bigg|\sum_{n\leq N} D(n)\bigg| + N|\vartheta| \max_{t\leq N} \bigg(\sum_{n\leq t} D(n)\bigg) \\
   &= O_{R}\bigg(\frac{N^{\omega}}{(\log N)^{R}}\bigg). \qedhere
  \end{align*}
 \end{proof}
 
 Next, we show that the singular series $\mathfrak{S}_{k,h}^{*}(N)$ converges absolutely and is bounded away from $0$ and $\infty$ for $N\equiv h\pmod{K(k)}$, where $K(k)$ is as in \eqref{Kkval}.
 
 \begin{lem}[Singular series]\label{SingSerPlem}
  Let $S^{*}(q,a)$ be as in Lemma \ref{est2}, and let
  \[ \mathfrak{S}^{*}_{k,h}(N,L) := \sum_{q\leq L} \sum_{\substack{a=1\\ (a,q)=1}}^{q} \frac{S^{*}(q,a)^{h}}{\varphi(q)^h} e\bigg(-\frac{Na}{q}\bigg), \quad \mathfrak{S}^{*}_{k,h}(N) = \lim_{L\to\infty} \mathfrak{S}^{*}_{k,h}(N,L). \]
  Then, for $h\geq 2H_0+1$ we have
  \[ \mathfrak{S}^{*}_{k,h}(N,L) = (1 + O_{\eps}(L^{-1/2+\eps}))\, \mathfrak{S}^{*}_{k,h}(N) \quad\text{as}\quad L\to\infty, \]
  Moreover, $\mathfrak{S}^{*}_{k,h}(N) \ll 1$ for all $N$, and $\mathfrak{S}^{*}_{k,h}(N) \gg 1$ for $N\equiv h\pmod{K(k)}$.
 \end{lem}
 \begin{proof}
  Let $q\geq 1$, $1\leq a\leq q$ with $(a,q)=1$. When $k=1$, we have $S^{*}(q,a)=\mu(q)$. When $k\geq 2$, by \cite[Lemma 8.5]{hua65} we have $S^{*}(q,a) \ll_{\eps} q^{1/2+\eps}$. Thus, since $\varphi(q) \gg_{\eps} q^{1-\eps}$ we have
  \begin{align*}
   |\mathfrak{S}_{k,h}^{*}(N) - \mathfrak{S}_{k,h}^{*}(N,L)|
   &\leq \sum_{q>L} \sum_{\substack{a=1\\ (a,q)=1}}^{q} \bigg|\frac{S^{*}(q,a)^{h}}{\varphi(q)^{h}} \bigg| \\
   &\ll_{\eps}
   \begin{cases}
    \displaystyle \sum_{q>L} \varphi(q)^{1-h} \ll_{\eps} \sum_{q>L} q^{1-h+\eps} \asymp L^{2-h+\eps}, & k=1, \\[1.0ex]
    \displaystyle \sum_{q>L} \varphi(q)\,\frac{q^{h/2+\eps}}{\varphi(q)^{h}}
    \ll_{\eps} \sum_{q>L} q^{1-h/2+\eps} \asymp L^{2-h/2+\eps}, & k\geq 2.
   \end{cases}
  \end{align*}
  Since $H_0(1) =1$ and $H_0(k) \geq 2$ for $k\geq 2$, and $h\geq 2H_0+1$, the stated error term follows. In particular, this shows $\mathfrak{S}_{k,h}^{*}(N) \ll 1$.
  
  By \cite[Lemma 8.12]{hua65}, $\mathfrak{S}_{k,\ell}^{*}(N) \gg 1$ for $N\equiv \ell\pmod{K(k)}$ whenever $\ell \geq 3$ for $k=1$, $\ell \geq 5$ for $k=2$, and $\ell\geq 3k$ for $k\geq 3$. In all cases, this holds true for $\ell \geq 2H_0+1$.
 \end{proof}
 
 We are now able to substitute $T$, $T^{\sharp}$ by $U$, $U^{\sharp}$ in the integral of Proposition \ref{sssi}.
 
 \begin{lem}\label{quasiprop}
  For $1\leq j\leq h$, for every $\eps > 0$ we have
  \begin{align*}
   \int_{\mathfrak{M}} T^{\sharp}(\alpha;N)^{j}\, &T(\alpha;N)^{h-j}\,e(-N\alpha)\,\mathrm{d}\alpha \\
   &= (1+O_{\eps}((\log N)^{-C/2+\eps}))\,\mathfrak{S}^{*}_{k,h}(N)\int_{0}^{1} U^{\sharp}(\vartheta;N)^{j}\, U(\vartheta; N)^{h-j}\,e(-N\vartheta)\,\mathrm{d}\vartheta \\
   &\hspace{+19.5em}+ O\bigg(\frac{N^{h\omega-1}}{(\log N)^{C(h-1)\min\{1,\omega\}}}\bigg).
  \end{align*}
 \end{lem}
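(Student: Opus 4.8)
The argument is the usual major-arc dissection; the analytic content has already been supplied by Lemmas \ref{est2} and \ref{est4}, so the remaining ingredients are the completion of the singular series (Lemma \ref{SingSerPlem}) together with some crude size bounds. \emph{Step 1 (main term on each arc).} Fix $q\le Q$ and $a$ with $(a,q)=1$, and on $\mathfrak{M}(q,a)$ set $\vartheta:=\alpha-a/q$, so that $|\vartheta|\le Q/N$ and $e(-N\alpha)=e(-Na/q)\,e(-N\vartheta)$. Write $A:=\frac{S(q,a)}{\varphi(q)}u(\vartheta;N)$ and $\widetilde{A}:=\frac{S(q,a)}{\varphi(q)}\widetilde{u}(\vartheta;N)$. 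Since $|S(q,a)|/\varphi(q)\ll q^{-1/k+\eps}\le 1$ (as in the proof of Lemma \ref{SingSerPlem}) and $|u(\vartheta;N)|\le u(0;N)\ll N^{\omega}$, $|\widetilde{u}(\vartheta;N)|\ll N^{\omega}$, we have $|A|,|\widetilde{A}|\ll N^{\omega}$; moreover Lemma \ref{est2} gives $T(\alpha;N)=A+O_{R}\big(N^{\omega}(\log N)^{-R}\big)$ and $\widetilde{T}(\alpha;N)=\widetilde{A}+O_{R}\big(N^{\omega}(\log N)^{-R}\big)$. Expanding $\widetilde{T}(\alpha;N)^{j}T(\alpha;N)^{h-j}$ and collecting the at most $2^{h}$ monomials containing at least one of these error factors (each such monomial being $\ll N^{(h-1)\omega}\cdot N^{\omega}(\log N)^{-R}$) gives, pointwise on $\mathfrak{M}(q,a)$,
\[
  \widetilde{T}(\alpha;N)^{j}\,T(\alpha;N)^{h-j}=\Big(\tfrac{S(q,a)}{\varphi(q)}\Big)^{h}\widetilde{u}(\vartheta;N)^{j}\,u(\vartheta;N)^{h-j}+O_{R}\bigg(\frac{N^{h\omega}}{(\log N)^{R}}\bigg).
\]
Integrating over $\mathfrak{M}(q,a)$ (of length $2Q/N$) and summing over the $\sum_{q\le Q}\varphi(q)\ll Q^{2}$ pairs $(q,a)$, recalling $Q=(\log N)^{C}$ and the definition of $\mathfrak{S}^{*}_{k,h}(N,Q)$ in Lemma \ref{SingSerPlem}, we obtain
\[
  \int_{\mathfrak{M}}\widetilde{T}(\alpha;N)^{j}T(\alpha;N)^{h-j}e(-N\alpha)\,\mathrm{d}\alpha=\mathfrak{S}^{*}_{k,h}(N,Q)\int_{|\vartheta|\le Q/N}\widetilde{u}(\vartheta;N)^{j}u(\vartheta;N)^{h-j}e(-N\vartheta)\,\mathrm{d}\vartheta+O_{R}\bigg(\frac{N^{h\omega-1}}{(\log N)^{R-3C}}\bigg).
\]

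\emph{Step 2 (completion of the series and the integral).} By Lemma \ref{SingSerPlem}, $\mathfrak{S}^{*}_{k,h}(N,Q)=\big(1+O_{\eps}((\log N)^{-C/k+\eps})\big)\mathfrak{S}^{*}_{k,h}(N)$ and $\mathfrak{S}^{*}_{k,h}(N,Q)\ll 1$. For the $\vartheta$-integral, Lemma \ref{est4} gives $|\widetilde{u}(\vartheta;N)^{j}u(\vartheta;N)^{h-j}|\ll N^{(\omega-1)j}|\vartheta|^{-j-(h-j)\omega}$ on $Q/N<|\vartheta|\le\tfrac12$; since $j+(h-j)\omega\ge 1+(h-1)\omega>1$ for $1\le j\le h$ (using $\omega\le 1$, which is the relevant range, as $\omega\le\beta=1/k$ in the applications), integrating this over $Q/N<|\vartheta|\le\tfrac12$ produces a tail
\[
  \ll N^{(\omega-1)j}\,(Q/N)^{\,1-j-(h-j)\omega}=N^{h\omega-1}\,Q^{\,1-j-(h-j)\omega}\le N^{h\omega-1}\,Q^{-(h-1)\omega}=\frac{N^{h\omega-1}}{(\log N)^{C(h-1)\omega}},
\]
while $\int_{-1/2}^{1/2}=\int_{0}^{1}$ by $1$-periodicity. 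Substituting these into the last display of Step 1 and choosing $R=R(C)$ large enough that $R-3C\ge C(h-1)\omega$ (admissible, as $R$ is arbitrary in Lemma \ref{est2}) to absorb the remaining $O_{R}$-term, we get
\[
  \int_{\mathfrak{M}}\widetilde{T}(\alpha;N)^{j}T(\alpha;N)^{h-j}e(-N\alpha)\,\mathrm{d}\alpha=\big(1+O_{\eps}((\log N)^{-C/k+\eps})\big)\mathfrak{S}^{*}_{k,h}(N)\int_{0}^{1}\widetilde{u}(\vartheta;N)^{j}u(\vartheta;N)^{h-j}e(-N\vartheta)\,\mathrm{d}\vartheta+O\bigg(\frac{N^{h\omega-1}}{(\log N)^{C(h-1)\omega}}\bigg),
\]
which is the assertion.

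\emph{Main obstacle.} With the substantive estimates packaged into Lemmas \ref{est2} and \ref{est4}, the task here is bookkeeping, and the one point requiring care is the tail of the auxiliary integral in Step 2: one must verify that $\int_{Q/N<|\vartheta|\le 1/2}\widetilde{u}(\vartheta;N)^{j}u(\vartheta;N)^{h-j}\,\mathrm{d}\vartheta$ genuinely converges with the claimed power of $\log N$, which comes down to the inequality $j+(h-j)\omega\ge 1+(h-1)\omega$, and hence to $\omega\le 1$ --- consistent with the range $1/h\le\omega\le 1/k$ governing Theorems \ref{MTp} and \ref{MTp1}. Everything else is the routine propagation of the $2^{h}$ error monomials through the product expansion, using the crude bound $|A|,|\widetilde{A}|,|T(\alpha;N)|,|\widetilde{T}(\alpha;N)|\ll N^{\omega}$ valid on the major arcs.
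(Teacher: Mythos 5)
Your argument is correct and follows the same route as the paper's own proof: approximate $T$ and $\widetilde{T}$ on each arc $\mathfrak{M}(q,a)$ via Lemma~\ref{est2}, sum the local factors to form $\mathfrak{S}^*_{k,h}(N,Q)$, complete the series via Lemma~\ref{SingSerPlem}, extend the singular integral to $\int_0^1$ by periodicity, and bound the tail over $Q/N<|\vartheta|\le\frac12$ with Lemma~\ref{est4}. Your handling of the tail is in fact a touch cleaner than the paper's display --- by keeping the full pointwise bound $|\widetilde{u}(\vartheta;N)|\ll N^{\omega-1}|\vartheta|^{-1}$ across the whole range you guarantee the integrand exponent $j+(h-j)\omega>1$ for every $1\le j\le h$, so the tail integral's convergence (and the resulting power $Q^{-(h-1)\omega}$) is transparent in all cases, including $j$ close to $h$.
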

 \begin{proof}
  Since $|U(\vartheta;N)| \leq U(0;N) = N^{\omega}/\omega + O(1+N^{\omega-1})$, Lemma \ref{est2} implies that
  \[ T(\alpha; N)^{h} = \frac{S^{*}(q,a)^{h}}{\varphi(q)^{h}} U(\vartheta; N)^h + O_R\bigg(\frac{N^{h\omega}}{(\log N)^R}\bigg), \]
  and similarly for $T^{\sharp}$, $U^{\sharp}$. Using that $\int_{\mathfrak{M}(q,a)}\mathrm{d}\alpha = 2Q/N$, this leads to the estimate
  \begin{align}
   \int_{\mathfrak{M}}\, &T^{\sharp}(\alpha;N)^{j}\,T(\alpha;N)^{h-j}\, e(-N\alpha)\,\mathrm{d}\alpha \nonumber \\
   &= \sum_{q\leq Q}\sum_{\substack{a=1\\ (a,q)=1}}^{q} \int_{\mathfrak{M}(q,a)} T^{\sharp}(\alpha;N)^{j}\,T(\alpha;N)^{h-j}\, e(-N\alpha)\,\mathrm{d}\alpha \nonumber \\
   &= \sum_{q\leq Q}\sum_{\substack{a=1\\ (a,q)=1}}^{q} \frac{S^{*}(q,a)^h}{\varphi(q)^{h}} \int_{\mathfrak{M}(q,a)} U^{\sharp}\bigg(\alpha - \frac{a}{q}; N\bigg)^j\, U\bigg(\alpha - \frac{a}{q}; N\bigg)^{h-j} e(-N\alpha)\,\mathrm{d}\alpha \nonumber\\
   &\hspace{+26.7em}+ O_R\bigg(\frac{Q^3 N^{\omega h-1}}{(\log N)^{R}}\bigg) \nonumber \\
   &= \underbrace{\sum_{q\leq Q} \sum_{\substack{a=1\\ (a,q)=1}}^{q} \frac{S^{*}(q,a)^{h}}{\varphi(q)^{h}} e\bigg(-\frac{Na}{q}\bigg)}_{=\,\mathfrak{S}^{*}_{k,h}(N,Q)} \int_{-Q/N}^{Q/N} U^{\sharp}(\vartheta; N)^j\, U(\vartheta; N)^{h-j}\,e(-N\vartheta) \,\mathrm{d}\vartheta \nonumber \\
   &\hspace{+27em}+ O_R\bigg(\frac{N^{\omega h-1}}{(\log N)^{R}}\bigg) \nonumber \\
   &= (1+O_{\eps}((\log N)^{-C/2+\eps}))\,\mathfrak{S}^{*}_{k,h}(N) \int_{-Q/N}^{Q/N} U^{\sharp}(\vartheta; N)^{j} U(\vartheta; N)^{h-j}\,e(-N\vartheta) \,\mathrm{d}\vartheta \nonumber \\
   &\hspace{+27em}+ O_R\bigg(\frac{N^{\omega h-1}}{(\log N)^{R}}\bigg), \nonumber
  \end{align}
  where the last line follows from Lemma \ref{SingSerPlem}, and $R$ is chosen sufficiently large in terms of $C$. The same argument used to deduce \eqref{estUshU} at the end of Lemma \ref{war-quasiprop} yields
  \[ \bigg|\int_{[-\frac{1}{2},-\frac{Q}{N}]\cup [\frac{Q}{N},\frac{1}{2}]} U^{\sharp}(\vartheta;N)^{j}\, U(\vartheta;N)^{h-j}\, e(-N\vartheta)\,\mathrm{d}\vartheta\bigg| \ll \frac{N^{h\omega-1}}{Q^{(h-1)\min\{1,\omega\}}}. \]
  Since $\mathfrak{S}_{k,h}^{*}(N)\ll 1$, this finishes the proof.
 \end{proof}

 \begin{proof}[Proof of Proposition \ref{sssi}]
  Since both $C/2-\eps \to \infty$ and $C(h-1)\min\{1,\omega\} \to \infty$ as $C\to \infty$, Lemma \ref{quasiprop} together with Lemma \ref{sgint} (singular integral) yield
  \begin{align*}
   &\sum_{j=1}^{h} (-1)^{j+1} \binom{h}{j} \int_{\mathfrak{M}} T^{\sharp}(\alpha;N)^{j}\, T(\alpha;N)^{h-j}\,e(-N\alpha)\,\mathrm{d}\alpha \\
   &= \bigg(1+O_R\bigg(\frac{1}{(\log N)^{R}}\bigg)\bigg)\mathfrak{S}^{*}_{k,h}(N) \sum_{j=1}^{h} (-1)^{j+1} \binom{h}{j}\int_{0}^{1} U^{\sharp}(\vartheta;N)^{j}\, U(\vartheta; N)^{h-j}\,e(-N\vartheta)\,\mathrm{d}\vartheta \\
   &\hspace{+30em}+ O_R\bigg(\frac{N^{h\omega-1}}{(\log N)^{R}}\bigg) \\
   &= \bigg(1+O_R\bigg(\frac{1}{(\log N)^{R}}\bigg)\bigg)(1+ O(N^{-\min\{1,\omega\}}))\, \mathfrak{S}^{*}_{k,h}(N)\, \frac{\Gamma(\omega)^{h}}{\Gamma(h\omega)}\, N^{h\omega -1} +O_R\bigg(\frac{N^{h\omega-1}}{(\log N)^{R}}\bigg),
  \end{align*}
  where $R = R(C) \to \infty$ as $C\to \infty$. Since $\mathfrak{S}^{*}_{k,h}(N)\ll 1$ by Lemma \ref{SingSerPlem}, rearranging the error terms concludes the proof.
 \end{proof}

\subsection{Minor arcs}\label{minarc}
 We now turn to the minor arcs $\mathfrak{m}$, as defined in the setup for Theorem \ref{MTp}. Together with \eqref{sothat} and Proposition \ref{sssi}, the next result directly implies Theorem \ref{MTp}.
 
 \begin{prop}[Minor arcs]\label{mARC}
  We have
  \[ \sum_{j=1}^{h} \bigg|\int_{\mathfrak{m}} T^{\sharp}(\alpha; N)^{j}\,T(\alpha;N)^{h-j}\, e(-N\alpha) \,\mathrm{d}\alpha\bigg| \ll_R \frac{N^{h\omega-1}}{(\log N)^{R}}, \]
  where $R = R(C) \to \infty$ as $C\to\infty$.
 \end{prop}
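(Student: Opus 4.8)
The plan is to reduce the minor-arc estimate to a classical Weyl-type bound for exponential sums over $k$-th powers of primes, combined with an $L^{2\ell}$ mean-value bound for $T$ (and $\widetilde T$) that will come from Hua-type inequalities or the Vinogradov/Wooley machinery already invoked to fix $h_k^*$. Since $|\widetilde T(\alpha;N)| \le T(0;N)^{1/?}$ is too crude, the key structural observation is the usual one: bound one or two factors of $T$ pointwise on $\mathfrak m$ by their sup-norm, and absorb the remaining factors into a mean value over the whole circle. Concretely, for each $j$ I would write
\[
\int_{\mathfrak m} \widetilde T(\alpha;N)^j\,T(\alpha;N)^{h-j}\,e(-N\alpha)\,\mathrm d\alpha \ll \Big(\sup_{\alpha\in\mathfrak m}|T(\alpha;N)|\Big)^{h-2\ell}\int_0^1 |T(\alpha;N)|^{2\ell}\,\mathrm d\alpha,
\]
where $2\ell \le h$ is chosen as small as the mean-value theorem for $\P^k$ permits (this is where the threshold $h^*_k$ enters: for $1\le k\le 11$ we use $2^k$-th powers, i.e. the trivial diagonal-plus-Hua bound, and for $k\ge 12$ the Wooley-type bound $\int_0^1|T|^{2\ell}\ll N^{2\ell\omega-1+o(1)}$ for $2\ell \ge 2k^2(2\log k+\log\log k+O(1))$). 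The same inequality, with $\widetilde T$ playing the role of $T$ in a couple of the factors, handles the general $j$, since $\widetilde T$ is a subsum of $T$ and hence $\|\widetilde T\|_{2\ell}\le \|T\|_{2\ell}$ trivially and $\sup|\widetilde T|\le \sup|T|$ after a dyadic decomposition.

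**Next I would establish the Weyl bound on the minor arcs.** Writing $T(\alpha;N)=\sum_{p^k\le N} p^{k\omega - 1}(\log p^k)\,e(\alpha p^k)$ (reindexing by the prime $p$), partial summation in the weight $p^{k\omega-1}\log p^k$ — which is monotone-ish up to $N^{o(1)}$ oscillation and handled exactly as in Lemma~\ref{est2}'s proof via the bound $\sum_{p\le t}|\cdots|\ll$ — reduces the sup-norm of $T$ to the sup-norm of the unweighted sum $\sum_{p\le P} e(\alpha p^k)$ over $P\le N^{1/k}$, times $N^{\omega - 1/k + o(1)}\cdot N^{1/k}=N^{\omega+o(1)}\cdot(\text{savings})$. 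For $\alpha\in\mathfrak m$, Dirichlet's theorem gives a rational approximation $|\alpha - a/q|\le 1/(qQ')$ with $Q < q \le N^{1/k}/Q'$ for a suitable auxiliary parameter, and then the standard Weyl–Vinogradov estimate for prime exponential sums (Vaughan~\cite{vaughan97}, Chapter~9, or Hua~\cite{hua65}) yields $\sum_{p\le P}e(\alpha p^k)\ll P^{1+\varepsilon}(q^{-1}+P^{-1}+qP^{-k})^{2^{1-k}}\ll P^{1+\varepsilon}\,Q^{-2^{1-k}}$ on $\mathfrak m$ since $q>Q=(\log N)^C$ there. Combining, $\sup_{\mathfrak m}|T(\alpha;N)|\ll N^{\omega+o(1)}(\log N)^{-c C}$ for some $c=c(k)>0$ (with $c=2^{1-k}$ if one uses the elementary Weyl bound, or better with Vinogradov). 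Feeding this into the display above with the mean-value bound gives
\[
\Big|\int_{\mathfrak m}\widetilde T^{\,j}T^{\,h-j}e(-N\alpha)\,\mathrm d\alpha\Big| \ll \big(N^{\omega+o(1)}(\log N)^{-cC}\big)^{h-2\ell}\cdot N^{2\ell\omega - 1+o(1)} = N^{h\omega-1+o(1)}(\log N)^{-cC(h-2\ell)},
\]
and since $h > 2\ell$ (one checks $h^*_k > 2\ell$ with the chosen $\ell$) and $N^{o(1)}$ is dominated by any power of $(\log N)^{-1}$ once $C$ is large, this is $\ll N^{h\omega-1}(\log N)^{-R}$ with $R=R(C)\to\infty$. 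Summing over $1\le j\le h$ costs only a factor $h$, completing the proof.

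**The main obstacle** I anticipate is getting the mean-value input into exactly the shape $\int_0^1|T|^{2\ell}\ll N^{2\ell\omega-1+o(1)}$ for $T$ the \emph{weighted} sum over prime $k$-th powers with the non-trivial exponent weight $p^{k\omega-1}\log p$, rather than for the clean sum $\sum_{p\le P}e(\alpha p^k)$ or $\sum_{n\le P}e(\alpha n^k)$. This is handled by the dyadic decomposition already used in the proof of Lemma~\ref{bddTT}: split $[1,N^{1/k}]$ into $O(\log N)$ dyadic ranges $P\sim 2^{-m}N^{1/k}$, on each of which the weight $p^{k\omega-1}\log p$ is $\asymp (2^{-m}N^{1/k})^{k\omega-1}\log N$ up to a bounded factor, pull it out, apply Hölder to reduce to the top $2H_{\text{(ii)}}$-th moment of the unweighted prime-power sum (for $k\ge 12$ this is Wooley's mean value, for $k\le 11$ it is the classical Hua inequality for $2^k$-th moments together with the prime-to-integer sieve majorant $\mathbbm 1_{\P}\ll \log$), and sum the geometric series in $m$ — the exponent $2\ell\omega - 1$ is exactly what makes the $m$-sum converge at its top end. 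The secondary technical point is to verify that the dependence of $R$ on $C$ is genuinely unbounded: this is automatic because the Weyl saving is $(\log N)^{-cC}$ with $c>0$ fixed and $h-2\ell\ge 1$, so $R(C)\ge cC(h-2\ell) - \varepsilon \to\infty$.
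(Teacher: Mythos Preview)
Your strategy---Weyl-type sup bound on $\mathfrak m$ combined with an $L^{2\ell}$ mean-value over the full circle, exploiting $h>2\ell$---is exactly the paper's. Two points, however, do not work as written.

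The serious one is the repeated use of $N^{o(1)}$ (and $P^{\varepsilon}$) losses. You state the mean-value input as $\int_0^1|T|^{2\ell}\ll N^{2\ell\omega-1+o(1)}$ and the Weyl bound as $\sum_{p\le P}e(\alpha p^k)\ll P^{1+\varepsilon}Q^{-2^{1-k}}$, then claim that ``$N^{o(1)}$ is dominated by any power of $(\log N)^{-1}$ once $C$ is large.'' This is false: $N^{\varepsilon}$ beats $(\log N)^{cC(h-2\ell)}$ for every fixed $\varepsilon>0$, regardless of $C$. The proposition only has logarithmic savings to spend, so every auxiliary input must be sharp to within $(\log N)^{O(1)}$. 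This is true---Hua's inequality gives $\int_0^1|\sum_{n\le x}e(\alpha n^k)|^{2^k}\,\mathrm d\alpha\ll x^{2^k-k}(\log x)^{c(k)}$, Hua's Lemma~7.13 handles $k\ge 12$ with no log loss, and Harman's bound for prime exponential sums loses only $(\log x)^{G}$---but you must invoke these sharp forms, not the $x^{o(1)}$ versions from the paper's abstract framework.

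Second, your displayed inequality replaces $\widetilde T^{\,j}T^{h-j}$ by $(\sup_{\mathfrak m}|T|)^{h-2\ell}|T|^{2\ell}$, and your justification (``$\|\widetilde T\|_{2\ell}\le\|T\|_{2\ell}$ trivially'') is false: a subsum can have larger $L^p$ norm. More substantively, bounding $\sup_{\mathfrak m}|T|$ itself requires care, since $T$ sums over all scales $p\le N^{1/k}$ and the minor-arc condition for $N$ gives no savings for short sums. The paper sidesteps this: it applies H\"older to get $\int_{\mathfrak m}|\widetilde T|^{j}|T|^{h-j}\le(\int_{\mathfrak m}|\widetilde T|^{h})^{j/h}(\int_0^1|T|^{h})^{1-j/h}$, extracts the Weyl savings only from $\widetilde T$ (where all summands satisfy $n\ge N/h$, so a single partial summation against $g(\alpha;t)$ for $t\asymp N$ suffices), and bounds $\int_0^1|T|^h$ purely by the mean-value estimate via the decomposition $T(\alpha;N)=\sum_{m}\widetilde T'(\alpha;h^{-m}N)$. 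The factor $j/h\ge 1/h$ then carries the savings.
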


 Define the function
 \begin{equation*}
  g(\alpha;x) := \sum_{n\leq x} \mathbbm{1}_{\P^k}(n) e(n\alpha). 
 \end{equation*}
  
 We start by bounding the value of $g(\alpha;x)$ for $\alpha\in\mathfrak{m}$. To this end, we employ a lemma due to Harman, in the form stated by Kumchev--Tolev \cite[Lemma 3.3]{kumtol05}.

 \begin{lem}[Harman \cite{har81}]\label{harman}
  Let $\alpha\in[0,1]$ be such that $\|\alpha - \frac{a}{q}\| \leq q^{-2}$, where $a$, $q$ are integers with $1\leq q\leq x$ and $(a,q)=1$. Then, there exists a constant $\sigma = \sigma(k) >0$ such that
  \[ g(\alpha;x) \ll (q^{-1} + x^{-1/2k} + q^{1/2}x^{-1})^{4^{1-k}} x^{1/k}(\log x)^{\sigma} \]
 \end{lem}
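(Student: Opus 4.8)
The plan is to recognise this as Harman's Weyl-type estimate for the exponential sum over $k$-th powers of primes and to sketch its proof via Vinogradov's bilinear method in Vaughan's formulation. Since $g(\alpha;x)=\sum_{p\le P}e(\alpha p^{k})$ with $P:=x^{1/k}$, and the higher prime powers $p^{j}$ ($j\ge 2$) contribute only $O(P^{1/2})$ to $\sum_{n\le P}\Lambda(n)e(\alpha n^{k})$, partial summation reduces the claim --- after writing $x^{-1/2k}=P^{-1/2}$ and $q^{1/2}x^{-1}=q^{1/2}P^{-k}$ --- to the bound
\[
 \sum_{n\le P}\Lambda(n)\,e(\alpha n^{k}) \;\ll\; \big(q^{-1}+P^{-1/2}+q^{1/2}P^{-k}\big)^{4^{1-k}}\,P\,(\log P)^{O(1)},
\]
uniformly for integers $1\le q\le P^{k}$ with $(a,q)=1$ and $|\alpha-a/q|\le q^{-2}$.

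First I would apply Vaughan's identity to $\Lambda$ with parameters $U=V:=P^{\delta}$, where $\delta=\delta(k)>0$ is to be optimised at the end, which (after a dyadic subdivision) expresses $\sum_{n\le P}\Lambda(n)e(\alpha n^{k})$ as $O((\log P)^{2})$ sums of two kinds. The \emph{Type~I} sums have the shape $\sum_{m\le M}a_{m}\sum_{y\le P/m}e(\alpha m^{k}y^{k})$ with $|a_{m}|\ll\tau(m)\log P$ and $M\ll P^{2\delta}$; the \emph{Type~II} sums have the shape $\sum_{M<m\le 2M}\sum_{N<y\le 2N}a_{m}b_{y}\,e(\alpha m^{k}y^{k})$ with $|a_{m}|,|b_{y}|\ll\tau(\cdot)\log P$, $P^{\delta}\ll M,N$ and $MN\ll P$. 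The structural point that makes the method go through is that $(my)^{k}=m^{k}y^{k}$, so in each piece the inner sum is a genuine Weyl sum whose leading coefficient is $\alpha m^{k}$ (Type~I) or $\alpha y^{k}$ (Type~II).

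For the Type~I sums I would estimate the inner sum $\sum_{y\le Y}e(\beta y^{k})$, $\beta=\alpha m^{k}$, by iterating Weyl differencing $k-1$ times, obtaining the classical bound in terms of $Y^{-1}\big(\sum_{\mathbf h}\min\!\big(Y,\|k!\,\beta\,h_{1}\cdots h_{k-1}\|^{-1}\big)\big)^{1/2^{k-1}}$; summing over $m\le M$ and invoking the standard estimate for $\sum_{\ell\le L}\min(Y,\|k!\alpha\ell\|^{-1})$ under the hypothesis $|\alpha-a/q|\le q^{-2}$ (cf.\ Vaughan \cite[Lemma~2.2]{vaughan97}) keeps the Type~I contribution admissible provided $M$ is not too large --- which is what forces $U,V$ to be small powers of $P$. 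For the Type~II sums I would apply Cauchy--Schwarz in the variable $m$, expand the square into $\sum_{m}|a_{m}|^{2}\sum_{N<y_{1},y_{2}\le 2N}b_{y_{1}}\overline{b_{y_{2}}}\,e\big(\alpha m^{k}(y_{1}^{k}-y_{2}^{k})\big)$, and then apply Weyl differencing in $m$ to the resulting pure Weyl sum, whose leading coefficient is now $\alpha(y_{1}^{k}-y_{2}^{k})$; after the differencing one is again reduced to sums of $\min(\cdot,\|\cdot\|^{-1})$ over multidimensional boxes, handled by the same divisor-type lemma together with the Dirichlet approximation hypothesis. The square-root loss from the Cauchy--Schwarz step, compounded with the loss from the $k-1$ Weyl differencings, is what replaces the pure-Weyl exponent $2^{1-k}$ by the prime exponent $4^{1-k}$, and the three terms $q^{-1}$, $P^{-1/2}$, $q^{1/2}P^{-k}$ are precisely those left over once the trivial and non-trivial ranges in the $\min$-sums are balanced.

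Finally I would add the Type~I and Type~II bounds, choose $\delta$ to equalise them, and absorb all divisor factors and logarithmic losses into a single power $(\log P)^{G}$ with $G=G(k)$; since $\log P\asymp\log x$ and $P=x^{1/k}$, this is the stated inequality. The main obstacle is the Type~II estimate: the nested Cauchy--Schwarz-then-Weyl-differencing has to be carried through carefully enough that the denominators $q$, $P$, $P^{k}$ recombine into the clean three-term factor with the correct exponent, and one must verify that balancing it against the Type~I constraint really yields the exponent $4^{1-k}$ uniformly for all $q\le x$ --- in particular in the awkward regime where $q$ is comparable to or larger than $P$, where one must replace $a/q$ by a sub-approximation to $\alpha$ on scale $P$. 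This computation is exactly the one performed by Harman \cite{har81}, in the form recorded by Kumchev--Tolev \cite{kumtol05}.
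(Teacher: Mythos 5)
The paper does not prove this lemma at all: it is imported as a black box, ``in the form stated by Kumchev--Tolev \cite[Lemma 3.3]{kumtol05}'', with the proof residing in Harman \cite{har81}. So there is no internal argument to compare yours against; the only question is whether your sketch faithfully reflects the known proof. It does in outline: passing from $g(\alpha;x)$ to $\sum_{n\le P}\Lambda(n)e(\alpha n^k)$ with $P=x^{1/k}$, Vaughan's identity producing Type~I and Type~II ranges, Weyl differencing on the inner Weyl sums with coefficients $\alpha m^k$ resp.\ $\alpha(y_1^k-y_2^k)$, the divisor-type lemma for $\sum\min(Y,\|k!\alpha\ell\|^{-1})$ under the Dirichlet hypothesis, and the Cauchy--Schwarz loss in Type~II accounting for the degradation from the pure-Weyl exponent $2^{1-k}$ to $4^{1-k}$ --- these are exactly the ingredients of Harman's argument. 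But as written your proposal is a roadmap rather than a proof: the decisive steps (the Type~II estimate after differencing, the choice of $U,V$, the treatment of the regime where $q$ is comparable to or exceeds $P$, and the balancing that produces precisely the three terms $q^{-1}$, $x^{-1/2k}$, $q^{1/2}x^{-1}$ with the exponent $4^{1-k}$) are deferred back to \cite{har81}. Given that the paper itself uses the lemma purely as a citation, this is an acceptable treatment, but you should be clear that you have reproduced the strategy, not verified the exponent bookkeeping that the statement actually encodes.
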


 \begin{cor}\label{condv}
  $\displaystyle \sup_{\alpha\in \mathfrak{m}} |g(\alpha;N)| \ll \frac{N^{1/k}}{(\log N)^{4^{1-k}C-\sigma}}$.
 \end{cor}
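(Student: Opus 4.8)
The plan is the standard pigeonhole-plus-Weyl-sum argument for minor arcs: take $\alpha\in\mathfrak{m}$, produce a good rational approximation $a/q$ via Dirichlet, show that $q$ must be large (otherwise $\alpha$ would lie in a major arc), and then feed $q$ into Harman's estimate (Lemma~\ref{harman}).

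Concretely, I would first apply Dirichlet's approximation theorem with parameter $N/Q$: there exist integers $a,q$ with $1\le q\le N/Q$, $(a,q)=1$, and $|\alpha - a/q|\le Q/(qN)$. Since $Q\le q\cdot(N/Q)^{-1}\cdot\cdots$ — more simply, $Q/(qN)\le 1/(qQ)$ once $q\le N/Q^2$; in fact $Q/(qN)\le q^{-2}$ precisely when $q\le N/Q$, which holds, so the hypothesis $|\alpha-a/q|\le q^{-2}$ of Lemma~\ref{harman} is satisfied, and also $1\le q\le N/Q\le N$. Next I claim $q>Q$: indeed, if $q\le Q$, then $|\alpha-a/q|\le Q/(qN)\le Q/N$, so $\alpha\in\mathfrak{M}(q,a)\subseteq\mathfrak{M}$, contradicting $\alpha\in\mathfrak{m}$.

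With $Q< q\le N/Q$ in hand, Lemma~\ref{harman} gives
\[
  g(\alpha;N)\ll \big(q^{-1}+N^{-1/2k}+q^{1/2}N^{-1}\big)^{4^{1-k}}\,N^{1/k}(\log N)^{G}.
\]
Now each of the three terms in the parenthesis is $\ll (\log N)^{-C}=Q^{-1}$ for $N$ large: $q^{-1}<Q^{-1}$ directly; $N^{-1/2k}$ decays polynomially in $N$, hence beats any power of $\log N$; and $q^{1/2}N^{-1}\le (N/Q)^{1/2}N^{-1}=N^{-1/2}Q^{-1/2}\le N^{-1/2}$, again polynomially small. Therefore
\[
  g(\alpha;N)\ll Q^{-4^{1-k}}N^{1/k}(\log N)^{G}=\frac{N^{1/k}}{(\log N)^{4^{1-k}C-G}},
\]
and since this bound does not depend on the particular $\alpha\in\mathfrak{m}$, taking the supremum completes the proof.

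This argument is entirely routine; there is no genuine obstacle. The only point requiring a little care is the bookkeeping with the two roles of $Q$: the Dirichlet parameter $N/Q$ must be chosen so that the resulting denominator $q$ is simultaneously (i) small enough, namely $q\le N/Q$, to validate Harman's hypothesis $|\alpha-a/q|\le q^{-2}$, and (ii) large enough, namely $q>Q$, which is forced by $\alpha$ avoiding all major arcs $\mathfrak{M}(q,a)$ with $q\le Q$. Everything else is a direct substitution into Lemma~\ref{harman}.
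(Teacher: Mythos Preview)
Your proof is correct and follows essentially the same route as the paper: Dirichlet's theorem with parameter $N/Q$ to get $Q<q\le N/Q$, then direct substitution into Harman's bound (Lemma~\ref{harman}), noting that all three terms inside the parenthesis are $\ll Q^{-1}$. Aside from a slightly muddled aside when verifying $Q/(qN)\le q^{-2}$, the argument matches the paper's line for line.
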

 \begin{proof}
  By Dirichlet's theorem (cf. Nathanson \cite[Theorem 4.1]{nathanson96}), for any $\alpha\in [0,1]$ there exists $1\leq q\leq N/Q$ and $1\leq a\leq q$ with $(a,q)=1$ such that
  \[ \bigg\|\alpha - \frac{a}{q} \bigg\| \leq \frac{Q}{qN} \leq \min\bigg(\frac{Q}{N},\frac{1}{q^2}\bigg). \]
  Since $\alpha\in \mathfrak{m}$, we must have $q>Q$ (otherwise, $\alpha\in \mathfrak{M}_{N}(q,a)$ by definition), so $Q< q\leq N/Q$. Hence, by Lemma \ref{harman}, since $Q=(\log N)^C$,
  \begin{align*}
   g(\alpha;N) &\ll \bigg(Q^{-1} + N^{-1/2k} + \bigg(\frac{N}{Q}\bigg)^{1/2}N^{-1}\bigg)^{4^{1-k}} N^{1/k}(\log N)^{\sigma} \\
   &\ll \frac{N^{1/k}(\log N)^{\sigma}}{Q^{4^{1-k}}} = \frac{N^{1/k}}{(\log N)^{4^{1-k}C-\sigma}}. \qedhere
  \end{align*}
 \end{proof}
 
 In order to prove Proposition \ref{mARC}, we need three auxiliary lemmas.
 
 \begin{lem}\label{cntsols}
  For $\ell \geq 2H_0$, we have
  \[ \int_{0}^{1} |T^{\sharp}(\alpha;x)|^{\ell}\,\mathrm{d}\alpha \ll x^{\ell\omega-1}(\log x)^{M+2H_0}. \]
  where $M$ is as in Lemma \ref{condiv}.
 \end{lem}
 \begin{proof}
  The integral $\int_{0}^{1} |T^{\sharp}(\alpha;x)|^{2H_0}\,\mathrm{d}\alpha$ counts the number of solutions to the Diophantine equation $\sum_{i=1}^{H_0} n_i = \sum_{i=1}^{H_0} m_i$ for $x/h\leq n_i,m_i\leq x$, $n_i,m_i\in \P^k$, with weights
  \begin{align*}
   (n_1\ldots n_{H_0} m_1\cdots m_{H_0})^{\omega-\frac{1}{k}}(\log n_1)\cdots (\log n_{H_0})(\log m_1)\cdots &(\log m_{H_0}) \ll \bigg(\frac{x^{\omega}}{x^{1/k}} \log x\bigg)^{2H_0}.
  \end{align*}
  By considering this underlying equation, Lemma \ref{condiv} yields the upper bound
  \[ \int_{0}^{1} |T^{\sharp}(\alpha;x)|^{2H_0}\,\mathrm{d}\alpha \ll \frac{x^{2H_0\omega}}{x^{2H_0/k}}(\log x)^{2H_0} \int_{0}^{1} |g(\alpha; x)|^{2H_0}\, \mathrm{d}\alpha \ll x^{2H_0 \omega - 1}(\log x)^{M+2H_0}. \]
  Using the trivial estimate
  \[ |T^{\sharp}(\alpha;x)| \leq T^{\sharp}(0;x) \ll x^{\omega-\frac{1}{k}}(\log x) \sum_{x/h\leq n \leq x} \mathbbm{1}_{\P^{k}}(n) \ll x^{\omega}, \]
  for every real $\ell\geq 2H_0$ we get
  \begin{equation*}
   \int_{0}^{1} |T^{\sharp}(\alpha;x)|^{\ell}\,\mathrm{d}\alpha \ll x^{(\ell-2H_0)\omega} \int_{0}^{1} |T^{\sharp}(\alpha;x)|^{2H_0}\,\mathrm{d}\alpha \ll x^{\ell\omega-1}(\log x)^{M+2H_0}. \qedhere
  \end{equation*}
 \end{proof}
 
 \begin{lem}\label{cntsols2}
  For $\ell \geq 2H_0$, we have
  \[ \int_{0}^{1} |T(\alpha;x)|^{\ell}\,\mathrm{d}\alpha \ll x^{\max\{\ell\omega-1,\,0\}}(\log x)^{\ell + M+2H_0}. \]
  where $M$ is as in Lemma \ref{condiv}.
 \end{lem}
 \begin{proof}
  Let $h\geq 2H_0+1$ be fixed. The absolute value of the series $T(\alpha;x)$ can be bounded by
  \begin{equation*}
   |T(\alpha;x)| \leq \sum_{j\leq \frac{\log x}{\log h}} |T^{\sharp}(\alpha; h^{-j} x)| + O(1).
  \end{equation*}
  Thus, by H\"older's inequality 
  and Lemma \ref{cntsols}, we have 
  \begin{align}
   \int_{0}^{1} |T(\alpha;x)|^{\ell}\,\mathrm{d}\alpha &\ll (\log x)^{\ell-1} \int_{0}^{1} \bigg(\sum_{j\leq \frac{\log x}{\log h}} |T^{\sharp}(\alpha; h^{-j} x)|^{\ell}\bigg)\,\mathrm{d}\alpha \nonumber \\
   &\ll (\log x)^{\ell} \max_{j\leq \frac{\log x}{\log h}} \int_{0}^{1} |T^{\sharp}(\alpha; h^{-j} x)|^{\ell} \,\mathrm{d}\alpha \nonumber \\
   &\ll (\log x)^{\ell+M+2H_0}\,\max_{j\leq \frac{\log x}{\log h}} (h^{-j}x)^{\ell\omega-1} \nonumber \\
   &\ll x^{\max\{\ell\omega -1,\, 0\}} (\log x)^{\ell + M + 2H_0}. \nonumber\qedhere
  \end{align}
 \end{proof}
 
 \begin{lem}\label{bdH}
  For $h \geq 2H_0+1$, we have
  \[ \int_{\mathfrak{m}} |T^{\sharp}(\alpha; N)|^h\,\mathrm{d}\alpha \ll_R \frac{N^{h\omega-1}}{(\log N)^{R}}, \]
  where $R = R(C) \to \infty$ as $C\to\infty$.
 \end{lem}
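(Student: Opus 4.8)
The plan is to interpolate between a pointwise bound on $\widetilde{T}$ over the minor arcs and the mean‑value estimate of Lemma~\ref{cntsols}. Since $h\ge h_{k}^{*}\ge 2h_{0}+1$ (so $h-2h_{0}\ge 1$), we may write
\[
\int_{\mathfrak{m}}|\widetilde{T}(\alpha;N)|^{h}\,\mathrm{d}\alpha\ \le\ \Big(\sup_{\alpha\in\mathfrak{m}}|\widetilde{T}(\alpha;N)|\Big)^{h-2h_{0}}\int_{0}^{1}|\widetilde{T}(\alpha;N)|^{2h_{0}}\,\mathrm{d}\alpha ,
\]
and the second factor is $\ll N^{2h_{0}\omega-1}(\log N)^{\xi+2h_{0}}$ by Lemma~\ref{cntsols} applied with $\ell=2h_{0}$. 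Hence the lemma reduces to the pointwise bound $\sup_{\alpha\in\mathfrak{m}}|\widetilde{T}(\alpha;N)|\ll N^{\omega}/(\log N)^{4^{1-k}C-G-1}$.

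For that, I would first pass from $\widetilde{T}$ to the bare exponential sum $g(\alpha;y)=\sum_{n\le y}\mathbbm{1}_{\P^{k}}(n)e(n\alpha)$ by partial summation against the weight $w(t):=t^{\omega-1/k}\log t$, using $\widetilde{T}(\alpha;N)=\sum_{N/h\le n\le N}w(n)\mathbbm{1}_{\P^{k}}(n)e(n\alpha)$ from \eqref{Tprime}. On the window $[N/h,N]$ one has $w(t)\asymp N^{\omega-1/k}\log N$ and $\int_{N/h}^{N}|w'(t)|\,\mathrm{d}t\ll N^{\omega-1/k}\log N$, so Abel summation gives $|\widetilde{T}(\alpha;N)|\ll N^{\omega-1/k}(\log N)\sup_{N/h\le y\le N}|g(\alpha;y)|$. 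Next I would rerun the proof of Corollary~\ref{condv}, but uniformly over $y\in[N/h,N]$: for $\alpha\in\mathfrak{m}_{N}$, Dirichlet's theorem produces $a/q$ with $Q<q\le N/Q$ and $|\alpha-a/q|\le q^{-2}$, and once $N$ is large (so that $Q=(\log N)^{C}>h$) we have $q\le N/Q\le N/h\le y$, so Lemma~\ref{harman} applies at every such $y$ and yields
\[
|g(\alpha;y)|\ \ll\ \big(q^{-1}+y^{-1/2k}+q^{1/2}y^{-1}\big)^{4^{1-k}}y^{1/k}(\log y)^{G}\ \ll\ \frac{N^{1/k}}{(\log N)^{4^{1-k}C-G}},
\]
since $q^{-1}<Q^{-1}=(\log N)^{-C}$ dominates the other two terms throughout the range. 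Multiplying by $N^{\omega-1/k}\log N$ gives the desired pointwise bound.

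Combining the two pieces yields $\int_{\mathfrak{m}}|\widetilde{T}(\alpha;N)|^{h}\,\mathrm{d}\alpha\ll N^{h\omega-1}/(\log N)^{r(C)}$ with $r(C):=(4^{1-k}C-G-1)(h-2h_{0})-\xi-2h_{0}$, which tends to $\infty$ as $C\to\infty$ because $h-2h_{0}\ge 1$. The main point requiring care is the uniformity of the minor‑arc estimate along the partial‑summation range: one must check that a single Dirichlet approximation $a/q$ attached to $\alpha\in\mathfrak{m}_{N}$ remains admissible for Harman's lemma at every $y$ down to $N/h$ (i.e.\ $q\le y$ there), and that the saving $Q^{-4^{1-k}}$ survives at the lower end of that range; both hold once $N$ is large enough that $(\log N)^{C}$ exceeds $h$. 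Everything else is a routine assembly of Lemmas~\ref{condiv}--\ref{cntsols}.
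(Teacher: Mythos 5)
Your proof is correct and follows essentially the same route as the paper: split off the supremum of $|\widetilde{T}|$ over the minor arcs to power $h-2h_0$, bound the remaining $L^{2h_0}$ norm by Lemma~\ref{cntsols}, and obtain the pointwise bound on $\widetilde{T}$ by partial summation against $g$ and Corollary~\ref{condv}. The one place you add something is the explicit check that the Dirichlet approximation $a/q$ attached to $\alpha\in\mathfrak{m}_N$ satisfies $q\le y$ and gives the same saving uniformly for $y\in[N/h,N]$; the paper silently applies the $\alpha\in\mathfrak{m}_N$ bound for $g(\alpha;\cdot)$ at all such $y$, so spelling out that $(\log N)^C>h$ suffices is a useful clarification rather than a new idea. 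One small note: your $r(C)=(4^{1-k}C-G-1)(h-2h_0)-\xi-2h_0$ is the right value; the paper's displayed $(h-h_0)$ is a typo for $(h-2h_0)$, though this does not affect the conclusion that $r(C)\to\infty$ as $C\to\infty$.
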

 \begin{proof}
  The argument of Corollary \ref{condv} applies uniformly to $[N/h,N]$, so 
  \[ \sup_{N/h\leq t\leq N} |g(\alpha;t)| \ll \frac{N^{1/k}}{(\log N)^{4^{1-k}C-\sigma}} \]
  for $\alpha\in\mathfrak{m}$. Hence, for $\alpha\in\mathfrak{m}$, partial summation yields
  \begin{align}
   T^{\sharp}&(\alpha; N) \nonumber \\
   &= \frac{N^{\omega}}{N^{1/k}}(\log N)\, g(\alpha;N) - \frac{(N/h)^{\omega}}{(N/h)^{1/k}}(\log N/h)\, g(\alpha; N/h) - \int_{N/h}^{N} g(\alpha;t)\,\mathrm{d}\bigg(\frac{t^{\omega}}{t^{1/k}}\log t\bigg) \nonumber \\
   &\ll N^{\omega-\frac{1}{k}}(\log N)\, \frac{N^{1/k}}{(\log N)^{4^{1-k}C- \sigma}} = \frac{N^{\omega}}{(\log N)^{4^{1-k} C - \sigma - 1}}. \label{bdsupP}
  \end{align}
  Since $h\geq 2H_0+1$, it follows from Lemma \ref{cntsols} that
  \begin{align*}
   \int_{\mathfrak{m}} |T^{\sharp}(\alpha;N)|^{h}\,\mathrm{d}\alpha &\leq \bigg(\sup_{\alpha\in\mathfrak{m}} |T^{\sharp}(\alpha; N)|^{h-2H_0} \bigg)\,\int_{0}^{1} |T^{\sharp}(\alpha;N)|^{2H_0}\,\mathrm{d}\alpha \\
   &\ll \bigg(\frac{N^{\omega}}{(\log N)^{4^{1-k}C - \sigma -1}}\bigg)^{h-2H_0} N^{2H_0\omega - 1}(\log N)^{M+2H_0} \\
   &= \frac{N^{h\omega-1}}{(\log N)^{R}},
  \end{align*}
  where $R = R(C) = (h- 2H_0)(4^{1-k}C-\sigma-1) - (M + 2H_0) \to \infty$ as $C\to\infty$.
 \end{proof} 
 
 As in the Waring case, we only needed to assume $\omega > 0$ up to here. Only in the next (and final) argument we will use that $\omega \geq 1/h - \delta$ for some $0 \leq \delta < (h-2H_0)/(2h(h-1)H_0)$.

 \begin{proof}[Proof of Proposition \ref{mARC}]
  Assume first that $\omega\geq 1/h$, so that $\max\{h\omega-1,\,0\} = h\omega-1$. Then, by Lemmas \ref{cntsols2} and \ref{bdH}, H\"older's inequality yields
  \begin{align*}
   \sum_{j=1}^{h} \bigg|\int_{\mathfrak{m}} T^{\sharp}(\alpha; N)^{j}\,T(\alpha;N)^{h-j}\,& e(-N\alpha) \,\mathrm{d}\alpha\bigg| \\
   &\leq \sum_{j=1}^{h} \bigg(\int_{\mathfrak{m}} |T^{\sharp}(\alpha;N)|^h\,\mathrm{d}\alpha \bigg)^{j/h}\,\bigg(\int_{0}^{1} |T(\alpha;N)|^{h}\,\mathrm{d}\alpha\bigg)^{1-j/h} \\
   &\ll_R \sum_{j=1}^{h} \bigg(\frac{N^{h\omega-1}}{(\log N)^{R}}\bigg)^{j/h}\, \big(N^{h\omega-1}(\log N)^{h+ M + 2H_0} \big)^{1 - j/h} \\
   &\ll \frac{N^{h\omega-1}}{(\log N)^{R'}},
  \end{align*}
  where $R' := R/h - (h+M+2H_0)(h-1)/h$. Since $R' = R'(C) \to \infty$ as $C\to\infty$, this proves the first part.
  
  Now, as in the proof of Proposition \ref{minorwar}, suppose that $1/h-\delta \leq \omega < 1/h$, and put $\theta := 1-(h-1)\omega$. Our hypothesis on $\delta$ ensures that $\theta < \frac{1}{2H_0}$. For $1\leq j\leq h$, another application of H\"older's inequality gives
  \begin{equation}\label{minarc-holderP}
   \bigg|\int_{\mathfrak{m}} T^{\sharp}(\alpha;N)^{j}\,T(\alpha;N)^{h-j}\,e(-N\alpha)\,\mathrm{d}\alpha\bigg| \ll \bigg(\sup_{\alpha\in\mathfrak{m}} |T^{\sharp}(\alpha;N)|\bigg)^{1-2H_0\theta}\, \Upsilon_1^{\theta}\, \Upsilon_2^{(j-1)\omega}\, \Upsilon_3^{(h-j)\omega},
  \end{equation}
  where
  \[ \Upsilon_1:=\int_{0}^{1} |T^{\sharp}(\alpha;N)|^{2H_0}\,\mathrm{d}\alpha,\quad
     \Upsilon_2:=\int_{0}^{1} |T^{\sharp}(\alpha;N)|^{1/\omega}\,\mathrm{d}\alpha,\quad
     \Upsilon_3:=\int_{0}^{1} |T(\alpha;N)|^{1/\omega}\,\mathrm{d}\alpha. \]
  By \eqref{bdsupP} in Lemma \ref{bdH} we have $\sup_{\alpha\in\mathfrak{m}} |T^{\sharp}(\alpha;N)| \ll N^{\omega}/(\log N)^{R(C)}$ with $R(C) \to\infty$ as $C\to\infty$, and by Lemmas \ref{cntsols} and \ref{cntsols2} we have $\Upsilon_1 \ll N^{2H_0\omega-1}(\log N)^{M_1}$ and $\Upsilon_2, \Upsilon_3 \ll (\log N)^{M_2}$ for some constants $M_1=M_1(k) > 0$ and $M_2 = M_2(k,h,\delta)>0$ (since $(1/\omega)\omega-1=0$ and $1/\omega > h > 2H_0$ in the range considered). Substituting these into \eqref{minarc-holderP}, we obtain
  \[ \bigg|\int_{\mathfrak{m}} T^{\sharp}(\alpha;N)^{j}\,T(\alpha;N)^{h-j}\,e(-N\alpha)\,\mathrm{d}\alpha\bigg| \ll \frac{N^{h\omega-1}}{(\log N)^{R'}}, \]
  with $R' = R(C)(1-2H_0\theta) - M_1\theta - (h-1)\omega M_2$. Since $1-2H_0\theta > 0$, we have $R'=R'(C) \to \infty$ as $C\to\infty$, and this concludes the proof.
 \end{proof}

\addtocontents{toc}{\protect\setcounter{tocdepth}{0}}
 \section*{Acknowledgements}
  The author thanks Andrew Granville for helpful advice during the preparation of this article, Cihan Sabuncu for insightful discussions related to this work, and Trevor Wooley for helpful remarks and references concerning the Vinogradov mean value theorem input.
 
\addtocontents{toc}{\protect\setcounter{tocdepth}{1}}

\bibliographystyle{amsplain}
\bibliography{$HOME/Academie/Recherche/_latex/bibliotheca}%
\end{document}